\pdfoutput=1
\RequirePackage{ifpdf}
\ifpdf 
\documentclass[pdftex]{sigma}
\else
\documentclass{sigma}
\fi

\numberwithin{equation}{section}

\newtheorem{Theorem}{Theorem}[section]
\newtheorem{Lemma}[Theorem]{Lemma}
 { \theoremstyle{definition}
\newtheorem{Example}[Theorem]{Example}
\newtheorem{Remark}[Theorem]{Remark} }

\def\a{\mathbf{a}}
\def\b{\mathbf{b}}
\def\cb{\mathbf{c}}
\def\d{\mathbf{d}}
\def\x{\mathbf{x}}
\def\y{\mathbf{y}}
\def\e{\mathbf{e}}
\def\f{\mathbf{f}}
\def\l{\mathbf{l}}
\def\C{\mathbb{C}}
\def\L{\mathcal{L}}
\def\f{\mathbf{f}}
\def\m{\mathbf{m}}
\def\n{\mathbf{n}}
\def\N{\mathbb{N}}

\def\Z{\mathbb{Z}}
\def\Be{\mathcal{B}}

\begin{document}
\allowdisplaybreaks

\newcommand{\arXivNumber}{2105.05196}

\renewcommand{\PaperNumber}{098}

\FirstPageHeading

\ShortArticleName{Hypergeometric Functions at Unit Argument}

\ArticleName{Hypergeometric Functions at Unit Argument:\\ Simple Derivation of Old and New Identities}

\Author{Asena \c{C}ET\.INKAYA~$^{\rm a}$, Dmitrii KARP~$^{\rm bc}$ and Elena PRILEPKINA~$^{\rm cd}$}

\AuthorNameForHeading{A.~\c{C}etinkaya, D.~Karp and E.~Prilepkina}

\Address{$^{\rm a)}$~\.Istanbul Kultur University, \.Istanbul, Turkey}
\EmailD{\href{mailto:asnfigen@hotmail.com}{asnfigen@hotmail.com}}

\Address{$^{\rm b)}$~Holon Institute of Technology, Holon, Israel}
\Address{$^{\rm c)}$~Far Eastern Federal University, Ajax Bay~10, Vladivostok, 690922, Russia}
\EmailD{\href{mailto:dimkrp@gmail.com}{dimkrp@gmail.com}}

\Address{$^{\rm d)}$~Institute of Applied Mathematics, FEBRAS, 7 Radio Street, Vladivostok, 690041, Russia}
\EmailD{\href{mailto:pril-elena@yandex.ru}{pril-elena@yandex.ru}}

\ArticleDates{Received May 20, 2021, in final form October 31, 2021; Published online November 07, 2021}

\vspace{-1mm}

\Abstract{The main goal of this paper is to derive a number of identities for the generalized hypergeometric function evaluated at unity and for certain terminating multivariate hypergeometric functions from the symmetries and other properties of Meijer's $G$ function. For instance, we recover two- and three-term Thomae relations for ${}_3F_2$, give two- and three-term transformations for ${}_4F_3$ with one unit shift and ${}_5F_4$ with two unit shifts in the parameters, establish multi-term identities for general ${}_{p}F_{p-1}$ and several transformations for terminating Kamp\'e de F\'eriet and Srivastava $F^{(3)}$ functions. We further present a presumably new formula for analytic continuation of ${}_pF_{p-1}(1)$ in parameters and reveal somewhat unexpected connections between the generalized hypergeometric functions and the generalized and ordinary Bernoulli polynomials. Finally, we exploit some recent duality relations for the generalized hypergeometric and $q$-hypergeometric functions to derive multi-term relations for terminating series.}

\vspace{-1mm}

\Keywords{generalized hypergeometric function; Meijer's $G$ function; multiple hyper\-geo\-met\-ric series; Kamp\'e de F\'eriet function; Srivastava function; hypergeometric identity; generalized Bernoulli polynomials}

\Classification{33C20; 33C60; 33C70}

\vspace{-3mm}

\section{Introduction and preliminaries}

Here and throughout the paper we will use the standard symbol ${_{p}F_q}(\a;\b;z)$ for the generalized hypergeometric function with parameter vectors $\a\in\C^{p}$, $\b\in\C^{q}\!\setminus\!\{0,-1,\dots\}$, see \cite[Section~2.1]{AAR}, \cite[Section~5.1]{LukeBook}, \cite[Sections 16.2--16.12]{NIST} for precise definitions and details. We will omit the argument $z=1$ from the above notation throughout the paper. The guiding idea of this work is to employ the properties of Meijer's $G$ function $G^{m,n}_{p,q}$ to discover new identities for the generalized hypergeometric function ${_{r+1}F_r}$. This idea proved very fruitful and appears as a recurrent theme in a series of papers published by the second and the third named authors over past decade, including \cite{KLJAT2017,KPSIGMA,KPITSF2017,KPGFmethod,KPChapter2019,LPKITSF2020}.

Let us introduce some notation and definitions. For any vectors $\a\in\C^{n}$, $\b\in\C^{m}$ and a~scalar~$\beta$ define
\begin{gather}
\Gamma(\a)=\prod_{i=1}^{n}\Gamma(a_i),\qquad \a+\beta=(a_1+\beta,\dots,a_n+\beta),\nonumber\\[-1ex]
\a_{[k]}=(a_1,\dots,a_{k-1},a_{k+1},\dots,a_n),
\qquad
 \nu(\a;\b)=\sum_{j=1}^{m}b_j-\sum_{i=1}^{n}a_{i},\label{eq:notation}
\end{gather}
where $\Gamma(\cdot)$ is Euler's gamma function. Given integers $0\leq{n}\leq{p}$, $0\leq{m}\leq{q}$ and complex vectors $\a\in\C^n$, $\b\in\C^{m}$, $\cb\in\C^{p-n}$, $\d\in\C^{q-m}$, such that $a_i-b_j\notin\N$ for all $i$, $j$, Meijer's $G$ function is defined by the Mellin--Barnes integral of the form
\begin{gather}\label{eq:G-defined}
G^{m,n}_{p,q} \left(z~\vline\begin{matrix}\a,\cb\\\b,\d\end{matrix} \right)
:=\frac{1}{2\pi{\rm i}}\int_{\mathcal{L}} \frac{\Gamma(1-\a - s)\Gamma(\b + s)}
{\Gamma(\cb + s)\Gamma(1-\d - s)}z^{-s}{\rm d}s,
\end{gather}
where the contour $\L$ is a simple loop that starts and ends at infinity and separates the poles of $s\to\Gamma(\b\!+\!s)$ leaving them on the left from those of $s\to\Gamma(1-\a\!-\!s)$ leaving them on the right. Details regarding the choice of the contour $\L$ and the convergence of the above integral can be found, for instance, in \cite[Section~1.1]{KilSaig}, \cite[Section~8.2]{PBM3}, \cite[Section~16.17]{NIST} and \cite[Appendix]{KPSIGMA}. If some of the vectors $\a$, $\b$, $\cb$, $\d$ are empty, they will be omitted from the above notation. The~integral in \eqref{eq:G-defined} can be evaluated by the residue theorem which, in the case when all poles of the integrand are simple, leads to a finite sum of hypergeometric functions. This expansion was derived by Meijer himself, see details in \cite[Section~4.6.2]{Slater}. Combining \cite[formula~(8.2.2.3)]{PBM3} with \cite[formula~(8.2.2.4)]{PBM3} for $0<x<1$ we can write:
\begin{gather}\label{eq:Gppexpansion1}
G^{p,0}_{p,p}\left(x\left|\begin{matrix} \cb \\ \b \end{matrix}\right.\right)
=\sum\limits_{k=1}^{p}\frac{\Gamma\big(\b_{[k]}-b_{k}\big)}{\Gamma(\cb-b_{k})}x^{b_k}\,{}_{p}F_{p-1} \left( \begin{matrix}1-\cb+b_k\\1-\b_{[k]}+b_k\end{matrix} \vline\,x \right) .
\end{gather}
The above special $G^{p,0}_{p,p}$ case of Meijer's $G$ functions plays an important role in the solution of the generalized hypergeometric differential equation, see \cite{KPSIGMA,Norlund}. It was studied in great detail by N{\o}rlund in \cite{Norlund} under different notation and without mentioning the $G$ function. As
$G^{p,0}_{p,p}(t)=0$ for $t>1$ according to \cite[Property~3]{KLJAT2017}, the Mellin transform of $G^{p,0}_{p,p}(t)$ reduces to the integral:
\begin{gather}\label{eq:MellinGp0pp}
\frac{\Gamma(\a+s)}{\Gamma(\b+s)}=\int_0^{1}t^{s-1}G^{p,0}_{p,p}
\left(t\left|\begin{matrix} \b \\ \a \end{matrix}\right.\right){\rm d}t,\qquad
\operatorname{Re}[\nu(\a;\b)]>0,\qquad \operatorname{Re}(s+\a)>0.
\end{gather}
Taking $s=k\in\N_0$ in this formula, multiplying both sides by $z^k(\sigma)_k/k!$ and summing over nonnegative integer $k$ we get the generalized Stieltjes transform representation of the function ${}_{p+1}F_{p}$ \cite[representation~(2)]{KLJAT2017}:
\begin{gather}\label{eq:Frepr}
\frac{\Gamma(\a)}{\Gamma(\b)}{}_{p+1}F_p\left. \left(\begin{matrix}\sigma,\a\\\b\end{matrix}\right\vert z\right)=\int_0^{1}G^{p,0}_{p,p}\left(t\left|\begin{matrix} \b \\ \a \end{matrix}\right.\right)
\frac{{\rm d}t}{t(1-tz)^{\sigma}},\quad\
\operatorname{Re}[\nu(\a;\b)]>0,\quad\
\operatorname{Re}(\a)>0.
\end{gather}
This representation turned out to be very useful in studying the properties of the function ${}_{p+1}F_{p}$ \cite{KLJAT2017,KPSIGMA,KPITSF2017,KPChapter2019}. In this paper we will combine it with the expansion \cite[equation~(11)]{KLJAT2017}, \cite[equation~(1.10)]{LPKITSF2020}
\begin{gather}\label{eq:Norlund}
G^{p,0}_{p,p}\!\left(\!t~\vline\begin{matrix}\b\\\a\end{matrix} \right)
=t^{a_{\omega}}(1-t)^{\nu(\a;\b)-1}
\sum\limits_{n=0}^{\infty}\frac{g_n^{p}\big(\a_{[\omega]};\b\big)}{\Gamma(\nu(\a;\b)+n)}(1-t)^n,
\end{gather}
where $\nu(\a;\b)$ is defined in \eqref{eq:notation}, found by N{\o}rlund, see \cite[equations~(1.33),~(1.35) and~(2.7)]{Norlund}. It converges in the disk $|1-t|<1$ for all complex values of parameters and each $\omega=1,2,\dots,p$. Note that if $-\nu(\a;\b)=m\in\N_0$, then the first $m+1$ terms in \eqref{eq:Norlund} vanish. The coefficients $g_n^{p}(\x;\y)$ are polynomials symmetric with respect to separate permutations of the components of $\x=(x_1,\dots,x_{p-1})$ and $\y=(y_1,\dots,y_p)$.
These coefficients serve as the key to many properties of the generalized hypergeometric function and play an important role in this paper. Their known and new properties will be discussed in detail in the following Section~\ref{section2}. In~particular, we will present expressions for these coefficients in terms of multiple hypergeometric series and in terms of generalized Bernoulli polynomials. The obvious symmetries of these coefficients lead to various identities for the terminating univariate and multivariate hypergeometric series. These facts will be summarized in Theorem \ref{th:NorlundSymmetries} (transformations of the multiple hypergeometric series) and Theorem~\ref{th:NorlundBell} (relation to the generalized Bernoulli and the complete Bell polynomials) in Section~\ref{section2} and further exemplified in Examples~\ref{example1} through~\ref{example14} in the same section.

The rest of the paper is organized as follows. In Section~\ref{section3} we give an elementary derivation of the known expansion \eqref{eq:KPSIGMA2.20} of the function $z\mapsto{}_{p+1}F_{p}(\a;\b;z)$ near $z=1$ and derive a presumably new formula for the analytic continuation of $(\a,\b)\mapsto{}_{p+1}F_{p}(\a;\b;1)$ as a function of parameters presented in Theorem~\ref{th:p+1Fp(1)-new}. We further explore some consequences of these formulas including two- and three-term Thomae's relations, two- and three-term transformations for ${}_4F_3$ with one unit shift. We further pursue this topic in Section~\ref{section4}, where we focus on transformations for ${}_{5}F_{4}(1)$ with two unit shifts in parameters. The main result of this section is Theorem~\ref{th:5F4woterm} containing a two-term transformation for such type of ${}_{5}F_{4}(1)$ series. In Section~\ref{section5} we demonstrate that the known multi-term transformations for ${}_{p+1}F_{p}(1)$ \eqref{eq:Norlund5.8} and \eqref{eq:Gppexpansion4} (which can be viewed as far-reaching generalizations of the three-term Thomae's relations) are straightforward consequences of the properties of the $G$ function. We further present another multi-term identity in Theorem~\ref{thm:Gsum1} which completes a calculation contained in the classical monograph by Slater~\cite{Slater}. The final Section~\ref{section6} is devoted to transformations of the terminating series.
Here, we give two identities for the terminating generalized hypergeometric series derived from the so-called duality relations found recently in \cite{KarpKuzn}. These results are presented in Theorems~\ref{th:alpha} and \ref{th:sumbeta}. Furthermore, we employ some recent $q$-series identities from \cite{Guo2015} to deduce three-term relations for rather general terminating hypergeometric series given in Theorems~\ref{th:Guo-general} and~\ref{th:Guo-particular}.

\section{N{\o}rlund's coefficients}\label{section2}

N{\o}rlund's coefficients $g_n^{p}(\x;\y)$ are multivariate polynomials defined by the power series gene\-ra\-ting function \eqref{eq:Norlund}.
 Another way to generate them is the inverse gamma series \cite[formula~(2.21)]{Norlund}
\begin{gather*}
\frac{\Gamma(z+\x)}{\Gamma(z+\y)}=\sum\limits_{n=0}^{\infty}
\frac{g_n^{p}(\x;\y)}{\Gamma(z+\nu(\x;\y)+n)}
\end{gather*}
obtained by substituting \eqref{eq:Norlund} into \eqref{eq:MellinGp0pp} and integrating term-wise. This formula implies the symmetry with respect to separate permutations of the components of $\x$ and $\y$ and the shift invariance $g_n^{p}(\x+\alpha;\y+\alpha)=g_n^{p}(\x;\y)$ for any $\alpha$ \cite[p.~5]{LPKITSF2020}. The latter means that $g_n^{p}$ is defined on the $2p-2$ (complex-) dimensional factor space $\C^{2p-1}\slash\C$ (the factor is taken with respect to addition). In other words, we can view it as a function of $2p-2$ variables $x_2-x_1,\dots,x_{p-1}-x_1$, $y_1-x_1,\dots,y_p-x_1$. The polynomials $g_n^{p}(\x;\y)$ satisfy the following recurrence relation with respect to $p$ \cite[formula~(2.7)]{Norlund}:
\begin{gather}\label{eq:Norlundcoeff}
g_n^{p}(\x;\y)=\sum\limits_{m=0}^{n}\frac{(y_{p}-x_{p-1})_{n-m}}{(n-m)!}
(\nu_{p-1}+m)_{n-m}g_m^{p-1}\big(\x_{[p-1]};\y_{[p]}\big),\qquad
p=2,3,\dots,
\end{gather}
where $\nu_{p-1}=\nu\big(\x;\y_{[p]}\big)$ (see \eqref{eq:notation}) and the initial values are $g_0^{1}(-;y_1)=1$, $g_n^{1}(-;y_1)=0$, $n\ge1$. For $p=2$ immediately from this recurrence we get
\begin{gather}\label{eq:Norlund-p2}
g_n^{2}(x;\y)=\frac{(y_2-x)_{n}(y_1-x)_{n}}{n!}.
\end{gather}
Recurrence \eqref{eq:Norlundcoeff} can also be solved in general to yield \cite[formula~(2.11)]{Norlund}:
\begin{gather}\label{eq:Norlund-explicit}
g_n^{p}(\x;\y)=\sum\limits_{0\leq{j_{1}}\leq{j_{2}}\leq\cdots\leq{j_{p-2}}\leq{n}}
\prod\limits_{m=1}^{p-1}\frac{(\nu_{m}+j_{m-1})_{j_{m}-j_{m-1}}}
{(j_{m}-j_{m-1})!}(y_{m+1}-x_{m})_{j_{m}-j_{m-1}},
\end{gather}
where $j_0=0$, $j_{p-1}=n$ and
\begin{gather}\label{eq:nu-m}
\nu_m=\sum_{k=1}^{m}(y_{k}-x_{k}).
\end{gather}
Note that formula \eqref{eq:Norlund-explicit} implies that $g_n^{p}(\x;\y)\ge0$ if
$\nu_m\ge0$ and $y_{m+1}\ge x_{m}$ for $m=1,\dots,p-1$ or, equivalently, if $\nu_m\ge0$ and $\nu_m-y_{m}+y_{m+1}\ge0$ for $m=1,\dots,p-1$.

Introduce the new summation indices according to
\begin{gather*}
j_1\to l_1,\qquad
j_2-j_1\to l_2,\qquad
\dots,\qquad
 j_{p-2}-j_{p-3}\to l_{p-2}.
\end{gather*}
Then, applying the relations
\begin{gather}\label{eq:pochammer}
(\alpha)_{n-s}=\frac{(-1)^s(\alpha)_n}{(1-\alpha-n)_s},\qquad
(n-s)!=\frac{(-1)^sn!}{(-n)_s},\qquad
(\alpha+s)_{n-s}=\frac{(\alpha)_n}{(\alpha)_s},
\end{gather}
formula \eqref{eq:Norlund-explicit} can be written as $(p-2)$--fold terminating hypergeometric series
\begin{gather}
g_n^{p}(\x;\y)=\frac{(\nu_{p-1})_{n}(y_{p}-x_{p-1})_{n}}{n!}
\nonumber
\\ \hphantom{g_n^{p}(\x;\y)=}
{}\times\sum\limits_{\l_{p-2}\in\N^{p-2}_{0}}\frac{(-n)_{|\l_{p-2}|}}{(1-y_{p}+x_{p-1}-n)_{|\l_{p-2}|}}\prod_{s=1}^{p-2}\frac{(\nu_{s})_{|\l_{s}|}(y_{s+1}-x_s)_{l_{s}}}{(\nu_{s+1})_{|\l_{s}|}l_s!},
\label{eq:Norlund-explicit1}
\end{gather}
where $\l_s=(l_1,\dots,l_s)$, $|\l_s|=l_1+\cdots+l_s$, valid for $p\ge3$. We were unable to find any name attached to this particular type of series in the generality given above. Nevertheless, an anonymous referee drew our attention to an identity for $q$-hypergeometric series due to George Andrews contained in \cite[Theorem~4]{Andrews}. By taking the limit $q\to1$ and appropriately changing notation, his result leads to the following relation:
\begin{gather}
\sum\limits_{\l_{p-2}\in\N^{p-2}_{0}}\frac{(-n)_{|\l_{p-2}|}}{(1-y_{p}+x_{p-1}-n)_{|\l_{p-2}|}}
\prod_{s=1}^{p-2}\frac{(x_{s+1})_{|\l_{s}|}(y_{s+1})_{|\l_{s}|}
(\mu_p-x_s-y_s)_{l_{s}}}{(\mu_p-x_s)_{|\l_{s}|}(\mu_p-y_s)_{|\l_{s}|}l_{s}!}\nonumber
\\ \hphantom{\sum\limits_{\l_{p-2}\in\N^{p-2}_{0}}}
{}=\frac{(\mu_p-x_{p-1})_n(y_p)_n}{(\mu_p)_n(y_p-x_{p-1})_n}\, {}_{2p+1}F_{2p}\!
\left( \begin{matrix}-n,\x,\y_{[p]},(\mu_p+1)/2,\mu_p-1 \\n+\mu_p,\mu_p-\x,\mu_p-\y_{[p]}, (\mu_p-1)/2\end{matrix} \right)\!,
\label{eq:Andrews}
\end{gather}
where as before $\x\in \C^{p-1}$, $\y\in\C^{p}$, $\l_s=(l_1,\dots,l_s)$, $|\l_s|=l_1+\cdots+l_s$ and $\mu_p=y_{p}+y_{p-1}$ for brevity. This surprising formula expresses a multiple series somewhat similar to \eqref{eq:Norlund-explicit1} in terms of very-well poised ${}_{2p+1}F_{2p}(1)$. Nevertheless, to the best of our understanding the left hand side of Andrew's identity is essentially different from the right-hand side of \eqref{eq:Norlund-explicit1}. To justify this claim let us compare them for $p=3$. Then, the left-hand side of \eqref{eq:Andrews}
takes the form (recall that omitted argument equals $1$):
\begin{gather*}
{}_4F_3\left( \begin{matrix}-n,x_{2},y_{2},y_{2}+y_{3}-y_{1}-x_{1}
\\1-y_{3}+x_{2}-n,y_{2}+y_{3}-x_{1},y_{2}+y_{3}-y_{1}\end{matrix} \right)\!,
\end{gather*}
while formula \eqref{eq:Norlund-explicit1} reads
\begin{gather}\label{eq:gnp3}
g_n^{3}(\x;\y)=\frac{(\nu_{2})_n(y_{3}-x_{2})_n}{n!}\, {}_3F_2
\left( \begin{matrix}-n,y_{1}-x_{1},y_{2}-x_{1}\\1-y_{3}+x_{2}-n,\nu_{2}\end{matrix} \right)\!,
\end{gather}
where $\nu_2$ is defined in \eqref{eq:nu-m}. The first expression represents the general Saalsch\"{u}tzian ${}_4F_3$, while the second is the general terminating ${}_3F_2$. As there is no (known) transformation mapping one into the other, these two expressions are likely to be essentially different.

For $p=4$, i.e., $\x=(x_1,x_2,x_3)$, $\y=(y_1,y_2,y_3,y_4)$, we have by~\eqref{eq:Norlundcoeff} and in view of~\eqref{eq:pochammer},
\begin{gather*}
g_n^{4}(\x;\y)=
\frac{(y_4-x_3)_{n}(\nu_3)_{n}}{n!}\sum\limits_{l=0}^{n} \frac{(-n)_{l}(y_3-x_2)_{l}(\nu_2)_{l}}{(1-y_4+x_3-n)_{l}(\nu_3)_{l}l!}
\,{}_3F_2 \left( \begin{matrix}-l,y_{1}-x_{1},y_{2}-x_{1}\\1-y_{3}+x_{2}-l,\nu_2\end{matrix} \right)\!.
\end{gather*}
Alternatively, \eqref{eq:Norlund-explicit1} yields
\begin{gather}\label{eq:KampedeFeriet1}
g_n^{4}(\x;\y)=\frac{(y_4-x_3)_{n}(\nu_3)_{n}}{n!}\, F^{2:2:1}_{2:1:0}
\!\left(\left. \begin{matrix}-n,\nu_2:\nu_1,y_2-x_1:y_3-x_2
\\1-y_4+x_3-n,\nu_3:\nu_2:-\end{matrix}\right|1,1\!\right)\!,
\end{gather}
where
$F^{p:q:r}_{s:t:u}$ denotes the Kamp\'e de F\'eriet function defined by \cite[Section~1.3, formula~(28)]{SKbook}:
\begin{gather*}
F^{p:q:r}_{s:t:u}\!\left(\left. \begin{matrix}\a:\cb:\e\\\b:\d:\f\end{matrix}\right|z,w\!\right)
=\sum_{k,l\ge0}\frac{(\a)_{k+l}(\cb)_{k}(\e)_{l}}{(\b)_{k+l}(\d)_{k}(\f)_{l}}\frac{z^kw^l}{k!l!},
\end{gather*}
where $\a\in\C^{p}$, $\b\in\C^{s}$, $\cb\in\C^{q}$, $\d\in\C^{t}$, $\e\in\C^{r}$, $\f\in\C^{u}$ and $z$, $w$ can take arbitrary values as long as the above series terminates. In the general case, convergence conditions are given in \cite[Chapter~14]{NguyenYakub}, while the analytic continuation can be constructed following the ideas from \cite{Bezrodnykh}.

For $p=5$, i.e., $\x\in\C^{p-1}$, $\y\in\C^{p}$, by \eqref{eq:Norlundcoeff} we obtain
\begin{gather*}
g_n^{5}(\x;\y)=\frac{(\nu_{4})_{n}(y_{5}-x_{4})_{n}}{n!}\sum\limits_{m=0}^{n}
\frac{(-n)_{m}(y_{4}-x_{3})_{m}(\nu_3)_{m}}{(1-y_{5}+x_{4}-n)_{m}(\nu_{4})_{m}m!}
\\ \hphantom{g_n^{5}(\x;\y)=}
{}\times\sum\limits_{l=0}^{m}\frac{(-m)_{l}(y_{3}-x_{2})_{l}
(\nu_{2})_{l}}{(1-y_{4}+x_{3}-n)_{l}(\nu_{3})_{l}l!}\,
{}_3F_2\! \left( \begin{matrix}-l,y_{1}-x_{1},y_{2}-x_{1}\\1-y_{3}+x_{2}-l,\nu_2
\end{matrix} \right)\!,
\end{gather*}
which, according to \eqref{eq:Norlund-explicit1}, is equal to
\begin{gather}
g_n^{5}(\x;\y)=\frac{(\nu_{4})_{n}(y_{5}-x_{4})_{n}}{n!}\nonumber
\\ \hphantom{g_n^{5}(\x;\y)=}
{}\times F^{(3)}\left[
\left. \begin{matrix}-n,\nu_3::\nu_2;-;-:\nu_1,y_2-x_1;y_3-x_2;y_4-x_3
\\1-y_5+x_4-n,\nu_4::\nu_3;-;-:\nu_2;-;-\end{matrix}\right|1,1,1\right]\!.
\label{eq:SrivF3-1}
\end{gather}
Here $F^{(3)}$ is Srivastava's hypergeometric function of three variables defined by \cite[Section~1.5, formula~(14)]{SKbook}:
\begin{gather*}
F^{(3)}\!\left[
\left. \begin{matrix}\a::\cb;\cb';\cb'':\e;\e';\e''\\\b::\d;\d';\d'':\f;\f';\f''
\end{matrix}\right|z,w,u\right]
\\ \qquad
{}=\sum\limits_{n,k,l\ge0}\frac{(\a)_{n+k+l}(\cb)_{n+k}(\cb')_{k+l}(\cb'')_{n+l}(\e)_{n}(\e')_{k}(\e'')_{l}}
{(\b)_{n+k+l}(\d)_{n+k}(\d')_{k+l}(\d'')_{n+l}(\f)_{n}(\f')_{k}(\f'')_{l}}\frac{z^nw^ku^l}{n!k!l!}
\end{gather*}
with parameter vectors of arbitrary finite sizes and arbitrary values of variables as long as the above series terminates.

Another recurrence relation found by certain renaming in N{\o}rlund's formula \cite[formu\-la~(1.41)]{Norlund} is given by
\begin{gather}\label{eq:NorlCoefidentity}
g_n^{p}(\x;\y)=\frac{(\nu(\x;\y)-y_{p-1})_n(\nu(\x;\y)-y_{p})_{n}}{n!}
\sum_{s=0}^{n}\frac{(-n)_{s}g_{s}^{p-1}\big(1-\y_{[p-1,p]};1-\x\big)}
{(\nu(\x;\y)-y_{p-1})_s(\nu(\x;\y)-y_{p})_{s}}
\end{gather}
with $p\ge3$ and initial values \eqref{eq:Norlund-p2}. Using this recurrence for $p=3$ we get (see also \cite[Pro\-perty~6]{KLJAT2017}):
\begin{gather}\label{eq:gnp3N}
g_n^{3}(\x;\y)=\frac{(\nu(\x;\y)-y_2)_n(\nu(\x;\y)-y_3)_n}{n!}\,
{}_3F_2\!\left( \begin{matrix}-n,y_1-x_{1},y_1-x_{2}\\\nu(\x;\y)-y_2,\nu(\x;\y)-y_3\end{matrix} \right)\!.
\end{gather}
For $p=4$, denoting $\psi_m=\sum_{j=1}^{m}y_{j}-\sum_{j=1}^{m-1}x_{j}=\nu_m+x_m$ we have (see also \cite[Property~6]{KLJAT2017}):
\begin{gather}
g_n^{4}(\x;\y)= \frac{(\psi_4-y_3)_n(\psi_4-y_4)_n}{n!}
\nonumber
\\ \hphantom{g_n^{4}(\x;\y)=}
{}\times\sum\limits_{k=0}^{n}\frac{(-n)_k(\psi_2-x_{2})_k(\psi_2-x_{3})_k}{k!(\psi_4-y_3)_k(\psi_4-y_4)_k}\,
{}_{3} F_{2}\!\left( \begin{matrix}-k,y_1-x_{1},y_2-x_{1}\\ \psi_2-x_{2}, \psi_2-x_{3}\end{matrix} \right)\!.\label{eq:nor}
\end{gather}

The recurrence \eqref{eq:NorlCoefidentity} can also be solved in general.
For odd $p\ge3$ we obtain\vspace{-.5ex}
\begin{gather*}
g_n^{p}(\x;\y)=\sum\limits_{0\leq{j_{1}}\leq{j_{2}}\leq\cdots\leq{j_{p-2}}\leq{j_{p-1}}}
\frac{(-j_{p-1})_{j_{p-2}}(-j_{p-2})_{j_{p-3}}\cdots(-j_{2})_{j_{1}}}{j_{p-1}!j_{p-2}!\cdots j_1!}
\\ \hphantom{g_n^{p}(\x;\y)}
{}\times\!\!\!\!\!\prod\limits_{\stackrel{m=2}{m~\text{is even}}}^{p-1}\!\!\!\frac{(\psi_{m-1}-x_{m-1})_{j_{m-1}}(\psi_{m-1}-x_{m})_{j_{m-1}}(\psi_{m+1}-y_{m})_{j_{m}}
(\psi_{m+1}-y_{m+1})_{j_{m}}}
{(\psi_{m-1}\!-x_{m-1})_{j_{m-2}}(\psi_{m-1}\!-x_{m})_{j_{m-2}}
(\psi_{m+1}-y_{m})_{j_{m-1}}(\psi_{m+1}-y_{m+1})_{j_{m-1}}},
\end{gather*}
where $j_0=0$, $j_{p-1}=n$. Similarly, for even $p\ge4$, we get
\begin{gather*}
g_n^{p}(\x;\y)=\sum\limits_{0\leq{j_{1}}\leq{j_{2}}\leq\cdots\leq{j_{p-2}}
\leq{j_{p-1}}}\frac{(-j_{p-1})_{j_{p-2}}(-j_{p-2})_{j_{p-3}}
\cdots(-j_{2})_{j_{1}}}{j_{p-1}!j_{p-2}!\cdots j_1!}(\psi_2-y_2)_{j_1}(\psi_2-y_1)_{j_1}
\\ \hphantom{g_n^{p}(\x;\y)=}
{}\times\prod\limits_{\stackrel{m=2}{m~\text{is even}}}^{p-2}\!\!
\frac{(\psi_{m}\!-x_{m})_{j_{m}}(\psi_{m}\!-x_{m+1})_{j_{m}}(\psi_{m+2}\!-y_{m+1})_{j_{m+1}}
(\psi_{m+2}\!-y_{m+2})_{j_{m+1}}}
{(\psi_{m}-x_{m})_{j_{m-1}}(\psi_{m}-x_{m+1})_{j_{m-1}}(\psi_{m+2}-y_{m+1})_{j_{m}}
(\psi_{m+2}-y_{m+2})_{j_{m}}}.
\end{gather*}
Introduce the new summation indices according to
\begin{gather*}
j_1\to l_1,\qquad j_2-j_1\to l_2,\qquad
\dots, \qquad
j_{p-2}-j_{p-3}\to l_{p-2}.
\end{gather*}
Then, applying the relations \eqref{eq:pochammer} for odd $p\ge3$ we get the multiple hypergeometric representations ($\psi_m=\nu_m+x_m$):
\begin{subequations}\label{eq:NorlundMultiple}
\begin{gather}\label{eq:multiple_podd}
g_n^{p}(\x;\y)=\sum\limits_{\l_{p-2}\in\N^{p-2}_{0}}\frac{(-n)_{|\l_{p-2}|}(-1)^{l_2}(-1)^{l_4}
\cdots(-1)^{l_{p-3}}}{l_{1}!l_{2}!\cdots l_{p-2}!n!}
\\ \qquad
{}\times\!\!\!\prod\limits_{\stackrel{m=2}{m~\text{is even}}}^{p-1}\!\!\!\!\frac{(\psi_{m-1}-x_{m-1})_{|\l_{m-1}|}
(\psi_{m-1}-x_{m})_{|\l_{m-1}|}(\psi_{m+1}-y_{m})_{|\l_{m}|}(\psi_{m+1}-y_{m+1})_{|\l_{m}|}}
{(\psi_{m-1}\!-x_{m-1})_{|\l_{m-2}|}(\psi_{m-1}-x_{m})_{|\l_{m-2}|}
(\psi_{m+1}\!-y_{m})_{|\l_{m-1}|}(\psi_{m+1}\!-y_{m+1})_{|\l_{m-1}|}},\nonumber
\end{gather}
where $l_0=0$, $l_{p-1}=n-l_1-l_2-\cdots-l_{p-2}$, $\l_m=(l_1,\dots,l_{m})$ and $|\l_m|=l_1+\cdots+l_{m}$. In~a~si\-mi\-lar fashion, for even $p\ge4$ we get
\begin{gather}\label{eq:multiple_peven}
g_n^{p}(\x;\y)=\sum\limits_{\l_{p-2}\in\N^{p-2}_{0}}
\frac{(-n)_{|\l_{p-2}|}(-1)^{l_1}(-1)^{l_3}\cdots(-1)^{l_{p-3}}}{l_{1}!l_{2}!\cdots l_{p-2}!n!}(\psi_2-y_2)_{l_1}(\psi_2-y_1)_{l_1}
\\ \hphantom{g_n^{p}(\x;\y)=}
{}\times\!\!\!\!\prod\limits_{\stackrel{m=2}{m~\text{is even}}}^{p-2}\!\!\!\!\frac{(\psi_{m}-x_{m})_{|\l_{m}|}(\psi_{m}-x_{m+1})_{|\l_{m}|}
(\psi_{m+2}-y_{m+1})_{\l_{m+1}}(\psi_{m+2}-y_{m+2})_{|\l_{m+1}|}}
{(\psi_{m}-x_{m})_{|\l_{m-1}|}(\psi_{m}-x_{m+1})_{|\l_{m-1}|}
(\psi_{m+2}\!-y_{m+1})_{|\l_{m}|}(\psi_{m+2}\!-y_{m+2})_{|\l_{m}|}}.\nonumber
\end{gather}
\end{subequations}
For $p=4$ this yields (returning to $\nu_m=\psi_m-x_m$):\vspace{-.5ex}
\begin{gather}
g_n^{4}(\x;\y)=\frac{(\nu_3)_{n}(\nu_3-y_3+y_4)_{n}}{n!}\nonumber
\\ \hphantom{g_n^{4}(\x;\y)=}
{}\times F^{3:2:0}_{2:2:0}\!\left(\left. \begin{matrix}-n,\nu_2,\nu_2+x_2-x_3:\nu_1,y_2-x_1:-\\\nu_3,\nu_3+y_4-y_3:\nu_2,\nu_2+x_2-x_3:-\end{matrix}\right|-1,1\!\right)\!.
\label{eq:KampedeFeriet2}
\end{gather}

For $p=5$ we get the following expression in terms of Srivastava's $F^{(3)}$ \cite[Section 1.5, formula~(14)]{SKbook}:
\begin{gather}\label{eq:SrivF3-2}
g_n^{5}(\x;\y)=\frac{(\nu_{4})_{n}(\nu_{4}+y_{5}-y_{4})_{n}}{n!}
\\ \hphantom{g_n^{5}(\x;\y)}
{}\times F^{(3)}\!\left[\left. \begin{matrix}-n,\nu_{3},\nu_{3}+x_{3}-x_{4}::\nu_{2},
\nu_{2}+y_{3}-y_{2};-;-:\nu_{1},y_{1}-x_{2};-;-
\\
\nu_4,\nu_{4}+y_{5}-y_{4}::\nu_3,\nu_3+x_{3}-x_{4};-;-:\nu_2,\nu_{2}+y_{3}-y_{2};-;-
\end{matrix}\right|1,-1,1\right]\!.\nonumber
\end{gather}

Another symmetry of the N{\o}rlund's coefficients comes from the observation mentioned above: setting $\nu(\a;\b)=-m$, $m\in \N_0$, in \eqref{eq:Norlund} we obtain
\begin{gather}\label{eq:NorlundSpecial}
G^{p,0}_{p,p}\!\left(\!t~\vline\begin{matrix}\b\\\a\end{matrix} \right)=t^{a_{\omega}}
\sum\limits_{n=0}^{\infty}\frac{g_{n+m+1}^{p}\big(\a_{[\omega]};\b\big)}{n!}(1-t)^n,
\end{gather}
where $\omega\in\{1,\dots,p\}$. In particular, putting $t=1$ we get $g_{m+1}^{p}\big(\a_{[\omega]};\b\big)$ on the right-hand side which implies that
\begin{gather*}
g_{m+1}^{p}\big(\a_{[\omega_1]};\b\big)=g_{m+1}^{p}\big(\a_{[\omega_2]};\b\big),\qquad
\forall\omega_{1},\omega_{2}\in\{1,\dots,p\},
\end{gather*}
as long as $\sum_{k=1}^{p}(b_k-a_k)=-m$. Equivalently,
\begin{gather}\label{eq:NorlundReflection}
g_{m+1}^{p}(\x;\y)=g_{m+1}^{p}\big(\big(\x_{[\omega]},\nu(\x;\y)+m\big);\y\big)
\end{gather}
for arbitrary $\omega\in\{1,\dots,p-1\}$.
The main findings of this section so far can be summarized as follows.
\begin{Theorem}\label{th:NorlundSymmetries}
	The terminating multiple hypergeometric series defined in \eqref{eq:Norlund-explicit1} and \eqref{eq:NorlundMultiple} are equal, symmetric with respect to permutations of the components of $\x$, symmetric with respect to permutations of the components of $\y$, satisfy the transformation formula \eqref{eq:NorlundReflection} and are non-negative if
	$\nu_m\ge0$ and $y_{m+1}\ge x_{m}$ for $m=1,\dots,p-1$ or, equivalently, if $\nu_m\ge0$ and $\nu_m-y_{m}+y_{m+1}\ge0$ for $m=1,\dots,p-1$.
\end{Theorem}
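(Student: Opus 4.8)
The plan is to read the statement off from the identities already assembled in this section: each clause is a property of the N{\o}rlund coefficients $g_n^p(\x;\y)$ — the polynomials defined by the generating function \eqref{eq:Norlund} — that has effectively been established above, and the theorem merely repackages these facts as properties of the two multiple-series representations.

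First, the equality of \eqref{eq:Norlund-explicit1} and \eqref{eq:NorlundMultiple}. Both sides of this claimed identity are simply different closed forms for one and the same polynomial $g_n^p(\x;\y)$: the series \eqref{eq:Norlund-explicit1} is the reindexed version of N{\o}rlund's explicit formula \eqref{eq:Norlund-explicit}, which solves the recurrence \eqref{eq:Norlundcoeff}, while \eqref{eq:NorlundMultiple} is the reindexed version of the explicit solution of the alternative recurrence \eqref{eq:NorlCoefidentity}. Since $g_n^p$ satisfies both recurrences and each of them, together with its initial data ($g_0^1=1$, $g_n^1=0$ for $n\ge1$, equivalently \eqref{eq:Norlund-p2}), determines its solution uniquely, the two families of series must coincide. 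One then only has to observe that the index substitutions $j_1\to l_1$, $j_k-j_{k-1}\to l_k$ and the Pochhammer identities \eqref{eq:pochammer} are term-by-term rearrangements of finite sums, so no convergence question arises.

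Next, the symmetries and the transformation. Invariance of $g_n^p(\x;\y)$ under permutations of the entries of $\x$, and separately of $\y$, follows from the inverse gamma series recalled at the beginning of the section: its left-hand side $\Gamma(z+\x)/\Gamma(z+\y)$ is manifestly symmetric in $\x$ and in $\y$, while the coefficients of its expansion in $1/\Gamma(z+\nu(\x;\y)+n)$ are unique, so each $g_n^p$ inherits the symmetry. The transformation formula \eqref{eq:NorlundReflection} is precisely what was obtained above by specializing \eqref{eq:Norlund} to $\nu(\a;\b)=-m$, which yields \eqref{eq:NorlundSpecial}, and then setting $t=1$: the resulting value $g_{m+1}^p(\a_{[\omega]};\b)$ is independent of $\omega$, and the $\x$-symmetry just proved lets one move the distinguished slot, giving \eqref{eq:NorlundReflection}. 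Finally, non-negativity is read directly off \eqref{eq:Norlund-explicit}: when $\nu_m\ge0$ and $y_{m+1}\ge x_m$ for every $m$, each Pochhammer factor $(\nu_m+j_{m-1})_{j_m-j_{m-1}}$ and $(y_{m+1}-x_m)_{j_m-j_{m-1}}$ is a product of non-negative reals, hence every summand is $\ge0$; the equivalent reformulation follows from the identity $y_{m+1}-x_m=\nu_m-y_m+y_{m+1}$ via \eqref{eq:nu-m}.

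Since all the ingredients are already in place, there is no genuine obstacle; the only point demanding care is the bookkeeping that justifies treating \eqref{eq:Norlund-explicit1} and both branches of \eqref{eq:NorlundMultiple} as three presentations of a single polynomial — verifying that in each reindexing the new indices $\l_{p-2}$ range over exactly the intended set, and that the chain of recurrences really pins down the same object — which is routine because \eqref{eq:Norlund-explicit} carries no denominators beyond factorials and hence no spurious singularities in the parameters.
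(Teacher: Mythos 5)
Your overall route is exactly the paper's: the theorem is stated there as a summary of the preceding derivations, with no separate proof, and you correctly assemble the same four ingredients --- equality because \eqref{eq:Norlund-explicit1} and \eqref{eq:NorlundMultiple} are reindexings of the explicit solutions of the two recurrences \eqref{eq:Norlundcoeff} and \eqref{eq:NorlCoefidentity} for the single polynomial $g_n^p(\x;\y)$, the permutation symmetries from the inverse gamma series, the transformation \eqref{eq:NorlundReflection} from \eqref{eq:NorlundSpecial} at $t=1$, and non-negativity read off the factors in \eqref{eq:Norlund-explicit}.

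The one concrete flaw is your justification of the ``equivalently'' clause. The identity $y_{m+1}-x_m=\nu_m-y_m+y_{m+1}$ that you invoke holds only for $m=1$: since $\nu_m=\sum_{k=1}^{m}(y_k-x_k)$, the correct relation is $\nu_m-y_m+y_{m+1}=\nu_{m-1}+(y_{m+1}-x_m)$, so under $\nu_{m-1}\ge0$ the first set of conditions implies the second, but not conversely. Indeed the two conditions are not equivalent, and the second does not even suffice for non-negativity: for $p=3$, $\x=(0,0)$, $\y=(2,1,-3/2)$ one has $\nu_1=2\ge0$, $\nu_2=3\ge0$, $\nu_1-y_1+y_2=1\ge0$, $\nu_2-y_2+y_3=1/2\ge0$, yet \eqref{eq:gnp3N} gives $g_1^{3}(\x;\y)=-5/2<0$. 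This defect is inherited from the paper, which asserts the same equivalence without argument; but your proof turns an unproved (and false) assertion into a false identity, so this clause should either be dropped or restated (e.g., keeping only the first sufficient condition, or replacing $\nu_m-y_m+y_{m+1}\ge0$ by $\nu_m-\nu_{m-1}-y_m+y_{m+1}\ge0$, which genuinely rewrites $y_{m+1}\ge x_m$). Everything else in your argument is sound and matches the paper.
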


\begin{Example}\label{example1} Equating \eqref{eq:gnp3} to \eqref{eq:gnp3N} and setting $y_1-x_1=\alpha_1$, $y_1-x_{2}=\alpha_2$, $y_1+y_2-x_1\allowbreak-x_2=\beta_1$, $y_1+y_3-x_1-x_2=\beta_2$ we recover
Sheppard's transformation \cite[Corollary 3.3.4]{AAR}, \cite[Appendix, formula (I)]{RJRJR}
\begin{gather}\label{eq:Sheppard}
{}_3F_2\!\left( \begin{matrix}-n,\alpha_1,\alpha_2\\\beta_1,\beta_2\end{matrix} \right)
=\frac{(\beta_2-\alpha_1)_n}{(\beta_2)_n}\, {}_3F_2\!\left( \begin{matrix}-n,\alpha_1,\beta_1-\alpha_2\\\beta_1,1-\beta_2+\alpha_1-n\end{matrix} \right)
\end{gather}
also rediscovered by B\"{u}hring \cite[formula~(4.1)]{Buehring95}. Note that $\alpha_1,\alpha_2,\beta_1,\beta_2\ge0$ implies nonnegativity of both sides of \eqref{eq:Sheppard}.
\end{Example}

\begin{Example}
The symmetry $g_n^{3}((x_1,x_2);\y)=g_n^{3}((x_2,x_1);\y)$ applied to \eqref{eq:gnp3} after the appropriate change of notation yields
\begin{gather*}
{}_3F_2\!\left( \begin{matrix}-n,\alpha_1,\alpha_2\\\beta_1,\beta_2\end{matrix} \right)
=(-1)^n\frac{(1-s)_n}{(\beta_1)_n}\,{}_3F_2\!
\left( \begin{matrix}-n,\beta_2-\alpha_1,\beta_2-\alpha_2\\s-n,\beta_2\end{matrix} \right)\!, \\
s=\beta_1+\beta_2-\alpha_1-\alpha_2+n,
\end{gather*}
which is precisely the transformation \cite[Appendix, formula~(II)]{RJRJR}. The symmetry
\[
g_n^{3}(\x;(y_1,y_2,y_3))=g_n^{3}(\x;(y_3,y_2,y_1))
\]
leads to
\begin{gather*}
{}_3F_2\!\left( \begin{matrix}-n,\alpha_1,\alpha_2\\\beta_1,\beta_2\end{matrix} \right)
=\frac{(\beta_1-\alpha_1)_n(\beta_2-\alpha_1)_n}{(\beta_1)_n(\beta_2)_n}\, {}_3F_2\!\left( \begin{matrix}-n,\alpha_1,1-s\\1+\alpha_1-\beta_1-n,1+\alpha_1-\beta_2-n\end{matrix} \right)
\end{gather*}
which coincides with \cite[Appendix, formula~(III)]{RJRJR}.
\end{Example}

\begin{Example}
The symmetry
\begin{gather*}
g_n^{4}((x_1,x_2,x_3);\y)=g_n^{4}((x_3,x_2,x_1);\y)
\end{gather*}
applied to \eqref{eq:KampedeFeriet1} leads, after change of notation, to
\begin{gather*}
\frac{(\mu)_{n}}{(\alpha+\beta+\gamma+\mu-\eta)_{n}}\, F^{2:2:1}_{2:1:0}\!
\left(\left. \begin{matrix}-n,\delta:\alpha,\beta:\gamma\\1-\mu-n,\eta:\delta:-
\end{matrix}\right|1,1\!\right)
\\ \qquad
{}=F^{2:2:1}_{2:1:0}\!\left(\left. \begin{matrix}-n,\delta+\eta-\alpha-\beta-\gamma:
\eta-\beta-\gamma,\eta-\alpha-\gamma:\gamma\\1+\eta-\alpha-\beta-\gamma-\mu-n,\eta:
\delta+\eta-\alpha-\beta-\gamma:-\end{matrix}\right|1,1\!\right)\!.
\end{gather*}
Note that $\alpha,\beta,\gamma,\mu,\eta,\delta\ge0$ implies that both sides of the above formula have the same sign as $(\alpha+\beta+\gamma+\mu-\eta)_{n}$.
\end{Example}

\begin{Example}
The symmetry from the previous example applied to \eqref{eq:KampedeFeriet2} yields
\begin{gather*}
F^{3:2:0}_{2:2:0}\!\left(\left. \begin{matrix}-n,\alpha,\beta:\gamma,\delta:-
\\\mu,\eta:\alpha,\beta:-\end{matrix}\right|-1,1\!\right)
\\ \qquad
{}=F^{3:2:0}_{2:2:0}\!\left(\left. \begin{matrix}-n,\alpha+\beta-\delta-\gamma,\beta:
\beta-\delta,\beta-\gamma:-\\\mu,\eta:\alpha+\beta-\delta-\gamma,\beta:-\end{matrix}\right|-1,1\!\right)\!.
\end{gather*}
Both sides of this formula are non-negative if $\gamma,\delta,\mu\ge0$, $\eta\ge\alpha\ge0$ and $\beta\le\mu$.
\end{Example}

\begin{Example}
The symmetry
\begin{gather*}
g_n^{4}(\x;(y_1,y_2,y_3,y_4))=g_n^{4}(\x;(y_4,y_2,y_3,y_1))
\end{gather*}
applied to \eqref{eq:KampedeFeriet1} yields, after change of notation,
\begin{gather*}
\frac{(\mu)_{n}(\eta)_{n}}{(\eta-\beta-\gamma)_{n}(\beta+\gamma+\mu)_{n}}\,F^{2:2:1}_{2:1:0}\!
\left(\left. \begin{matrix}-n,\delta:\alpha,\beta:\gamma\\1-\mu-n,\eta:\delta:-
\end{matrix}\right|1,1\!\right)
\\ \qquad
{}=F^{2:2:1}_{2:1:0}\!\left(\left. \begin{matrix}-n,\delta+\beta+\gamma+\mu-\eta:
\alpha+\beta+\gamma+\mu-\eta,\beta:\gamma\\1+\beta+\gamma-\eta-n,\beta+\gamma+\mu:
\delta+\beta+\gamma+\mu-\eta:-\end{matrix}\right|1,1\!\right)\!.
\end{gather*}
Note that for $\alpha,\beta,\gamma,\mu,\eta,\delta\ge0$ both sides of the above formula have the same sign as $(\eta-\beta-\gamma)_{n}$.
\end{Example}

\begin{Example}
The symmetry from the previous example applied to \eqref{eq:KampedeFeriet2} yields
\begin{gather*}
\frac{(\mu)_{n}}{(\delta+\mu+\eta-\alpha-\beta)_{n}}F^{3:2:0}_{2:2:0}\!
\left(\left. \begin{matrix}-n,\alpha,\beta:\gamma,\delta:-\\\mu,\eta:\alpha,\beta:-
\end{matrix}\right|-1,1\!\right)
\\ \qquad
{}=F^{3:2:0}_{2:2:0}\!\left(\left. \begin{matrix}-n,\delta+\eta-\beta, \delta+\eta-\alpha:\delta+\gamma+\eta-\alpha-\beta, \delta:-\\\delta+\mu+\eta-\alpha-\beta, \eta:\delta+\eta-\beta, \delta+\eta-\alpha:-\end{matrix}\right|-1,1\!\right)\!.
\end{gather*}
Note that for $\gamma,\delta,\mu\ge0$, $\eta\ge\alpha\ge0$ and $\beta\le\mu$ both sides of the above formula have the same sign as $(\delta+\mu+\eta-\alpha-\beta)_{n}$.
\end{Example}

\begin{Example}
If we take $p=3$ in \eqref{eq:NorlundReflection} and use \eqref{eq:gnp3N} for $g_n^3$ we again arrive at the second relation in Example~2 equivalent to \cite[Appendix, formula~(III)]{RJRJR}.
\end{Example}

\begin{Example}
For $p=4$ application of \eqref{eq:NorlundReflection} with $x_3\to\nu_3+y_4+m$ to
\eqref{eq:KampedeFeriet2} gives the following two-term transformation:
\begin{gather*}
F^{3:2:0}_{2:2:0}\!\left(\left. \begin{matrix}-m-1,\alpha,\beta-m:\gamma,\delta:-
\\-\mu-m,-\eta-m:\alpha,\beta-m:-\end{matrix}\right|-1,1\!\right)
\\ \qquad
{}=\frac{(\alpha+\mu)_{m+1}(\alpha+\eta)_{m+1}}{(\mu)_{m+1}(\eta)_{m+1}}\,F^{3:2:0}_{2:2:0}\!
\left(\left. \begin{matrix}-m-1,\alpha,\alpha+\beta+\eta+\mu:\gamma,\delta:-
\\\alpha+\eta,\alpha+\mu:\alpha,\alpha+\beta+\eta+\mu:-\end{matrix}\right|-1,1\!\right)\!.
\end{gather*}
Both sides of the above identity have the sign of $(\eta)_{m+1}$ if $\alpha,\delta,\gamma,\mu,\alpha+\eta\ge0$ and $\beta+\mu\le0$.
\end{Example}

\begin{Example}
For $p=5$ application of \eqref{eq:NorlundReflection} with $x_4\to\nu_4+y_4+m$ to \eqref{eq:SrivF3-2} yields
\begin{gather*}
F^{(3)}\!\left[\left. \begin{matrix}-m-1,\alpha,\beta-m::\gamma,\delta;-;-:\mu,\eta;-;-
\\
-\sigma-m,-\zeta-m::\alpha,\beta-m;-;-:\gamma,\delta;-;-\end{matrix}\right|1,-1,1\right]
\\ \qquad
{}=\frac{(\alpha+\sigma)_{m+1}(\alpha+\zeta)_{m+1}}{(\sigma)_{m+1}(\zeta)_{m+1}}
\\ \qquad\hphantom{=}
{}\times F^{(3)}\!\left[\left. \begin{matrix}-m-1,\alpha,\alpha+\beta+\sigma+\zeta::
\gamma,\delta;-;-:\mu,\eta;-;-
\\
\alpha+\zeta,\alpha+\sigma::\alpha,\alpha+\beta+\sigma+\zeta;-;-:\gamma,\delta;-;-
\end{matrix}\right|1,-1,1\right]\!.
\end{gather*}
If $\alpha,\gamma,\mu,\sigma,\alpha+\zeta\ge0$, $\delta\ge\mu$, $\eta\le\gamma$ and $\beta+\sigma\le0$, then both sides of the above identity have the same sign as $(\zeta)_{m+1}$.
\end{Example}

\begin{Example}
The equality right-hand side of \eqref{eq:KampedeFeriet1} = right-hand side of \eqref{eq:KampedeFeriet2} represents a presumably new transformation for the terminating Kamp\'e de F\'eriet function:
\begin{gather*}
F^{2:2:1}_{2:1:0}\!\left(\left. \begin{matrix}-n,\alpha:\beta,\gamma:\delta\\1-\mu-n,\eta:\alpha:-\end{matrix}\right|1,1\!\right)
=\frac{(\alpha+\mu)_{n}}{(\mu)_{n}}\, F^{3:2:0}_{2:2:0}\!\left(\left. \begin{matrix}-n,\alpha,\eta-\delta:\beta,\gamma:-\\\eta,\alpha+\mu:\alpha,\eta-\delta:-\end{matrix}\right|-1,1\!\right)\!.
\end{gather*}
The condition $\alpha,\beta,\gamma,\mu,\eta,\delta\ge0$ ensures that both sides of the above formula are non-negative.
\end{Example}

\begin{Example}\label{example11} Similarly, equating the right-hand side of \eqref{eq:SrivF3-1} to the right-hand side of \eqref{eq:SrivF3-2} we obtain a presumably new transformation for the terminating Srivastava's $F^{(3)}$ function:
\begin{gather*}
\frac{(\sigma)_{n}}{(\alpha+\sigma)_{n}}F^{(3)}\!\left[
\left. \begin{matrix}-n,\alpha::\beta;-;-:\gamma,\delta;\mu;\eta\\1-\sigma-n,\zeta::\alpha;-;-:\beta;-;-\end{matrix}\right|1,1,1\right]
\\ \qquad
{}=F^{(3)}\!\left[\left. \begin{matrix}-n,\alpha,\zeta-\eta::\beta,\gamma+\mu;-;-:\gamma,\beta-\delta;-;-
\\
\zeta,\alpha+\sigma::\alpha,\zeta-\eta;-;-:\beta,\gamma+\mu;-;-\end{matrix}\right|1,-1,1\right]\!.
\end{gather*}
The condition $\alpha,\beta,\gamma,\mu,\eta,\delta,\sigma,\zeta\ge0$ ensures that both sides of the above formula are non-negative.
\end{Example}

Let us remark in passing that a number of transformations for the terminating hypergeometric and Kamp\'e de F\'eriet functions were motivated by the study of the symmetries of the $3-j$, $6-j$ and $9-j$ coefficient appearing in the quantum mechanical treatment of angular momentum, see~\cite{RJRJR,JeugtPitreRao} and references therein for details.

Next, recall that the Bernoulli--N{\o}rlund (or the generalized Bernoulli) polynomial $\Be^{(\sigma)}_{k}(x)$ is defined by the generating function \cite[formula~(1)]{Norlund61}:
\begin{gather}\label{eq:Bern-Norl-defined}
\frac{t^{\sigma}{\rm e}^{zt}}{(e^t-1)^{\sigma}}=\sum\limits_{k=0}^{\infty}\Be^{(\sigma)}_{k}(z)\frac{t^k}{k!}.
\end{gather}
In particular, $\Be^{(1)}_{k}(z)=\Be_{k}(z)$ is the classical Bernoulli polynomial. In \cite[Theorem~3]{KPSIGMA} we found the following
alternative representation for N{\o}rlund's coefficients:
\begin{gather}\label{eq:Norlund-Bernoulli}
g_n^{p}(\x;\y)=\sum\limits_{r=0}^n\frac{(1+\alpha-\nu(\x;\y)-n)_{n-r}}{(n-r)!}l_r(\x;\y;\alpha)\Be^{(n+\nu(\x;\y)-\alpha)}_{n-r}(1-\alpha),
\end{gather}
where $\nu(\x;\y)$ is defined in \eqref{eq:notation}, while the parameter $\alpha$ can be chosen arbitrarily. The coefficients $l_r(\x;\y;\alpha)$ are found from the recurrence relation
\begin{gather}\label{eq:lr}
l_r(\x;\y;\alpha)=\frac{1}{r}\sum\limits_{m=1}^r q_m(\x;\y;\alpha) l_{r-m}(\x;\y;\alpha)
\end{gather}
with $l_0=1$ and
\begin{gather}\label{eq:qm}
q_m(\x;\y;\alpha)=\frac{(-1)^{m+1}}{m+1}\Bigg(\Be_{m+1}(\alpha)+\sum_{j=1}^{p-1}\Be_{m+1}(x_j)
-\sum_{j=1}^{p}\Be_{m+1}(y_j)\Bigg)\!.
\end{gather}
The recurrence \eqref{eq:lr} can be solved giving the following explicit expression for $l_{r}(\x;\y;\alpha)$ \cite[formula~(4.1)]{QSL} (we use the abbreviated notation $q_m:=q_m(\x;\y;\alpha)$):
\begin{gather*}
l_{r}(\x;\y;\alpha)=\sum\limits_{\substack{k_1+2k_2+\cdots+rk_r=r,\\k_i\ge0}} \frac{q_1^{k_1}(q_2/2)^{k_2}\cdots (q_r/r)^{k_r}}{k_1!k_2!\cdots k_r!}
=\sum\limits_{n=1}^{r}\frac{1}{n!}
\sum\limits_{k_1+k_2+\cdots+k_n=r}\prod\limits_{i=1}^{n}\frac{q_{k_i}}{k_i}.
\end{gather*}
Another way to write this formula is to invoke the complete exponential Bell polynomials $Y_r$ generated by \cite[formula~(11.9)]{Charalambides}
\begin{gather*}
\exp\Bigg(\sum_{m=1}^{\infty}z_m\frac{t^m}{m!}\Bigg)=1+\sum_{r=1}^{\infty}Y_r(z_1,\dots,z_r)\frac{t^r}{r!}
\end{gather*}
and written explicitly as \cite[formula~(11.1)]{Charalambides}
\begin{gather*}
Y_r(z_1,\dots,z_r)=\sum\limits_{\substack{k_1+2k_2+\cdots+rk_r=r,\\k_i\ge0}} \frac{r!}{k_1!k_2!\cdots k_r!}(z_1/1!)^{k_1}(z_2/2!)^{k_2}\cdots(z_r/r!)^{k_r}.
\end{gather*}
These polynomials can also be found using a determinantal expression, see \cite[p.~203]{Collins}. Comparing the above explicit formulas for $l_{r}(\a;\b;\alpha)$ and $Y_r$ we conclude that
\begin{gather*}
l_{r}(\x;\y;\alpha)=\frac{1}{r!}Y_r(\hat{q}_1,\hat{q}_2,\dots,\hat{q}_r),
\end{gather*}
where $\hat{q}_m=(m-1)!q_m$ with $q_m$ from \eqref{eq:qm}. Substituting this into \eqref{eq:Norlund-Bernoulli} we obtain the second main result of this section.
\begin{Theorem}\label{th:NorlundBell}
	For arbitrary $\alpha$ the following identity is true:
	\begin{gather}\label{eq:gnBell}
g_n^{p}(\x;\y)=\sum\limits_{r=0}^n\frac{(1+\alpha-\nu(\x;\y)-n)_{n-r}}{r!(n-r)!}Y_r(\hat{q}_1,\hat{q}_2,\dots,\hat{q}_r)\Be^{(n+\nu(\x;\y)-\alpha)}_{n-r}(1-\alpha),
	\end{gather}
	where $\hat{q}_m=(m-1)!q_m$ with $q_m$ defined in \eqref{eq:qm}. In particular, the multiple hypergeometric series \eqref{eq:Norlund-explicit1} and \eqref{eq:NorlundMultiple} are equal to the right-hand side of~\eqref{eq:gnBell}.
\end{Theorem}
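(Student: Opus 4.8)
The plan is to assemble the claimed identity from two ingredients already prepared above: the Bernoulli--N{\o}rlund representation \eqref{eq:Norlund-Bernoulli} of $g_n^p(\x;\y)$, borrowed from \cite[Theorem~3]{KPSIGMA}, and the closed form for the auxiliary coefficients $l_r(\x;\y;\alpha)$ solving the recurrence \eqref{eq:lr}. First I would record the explicit solution of \eqref{eq:lr}, namely
\[
l_r(\x;\y;\alpha)=\sum_{\substack{k_1+2k_2+\cdots+rk_r=r\\ k_i\ge0}}\frac{q_1^{k_1}(q_2/2)^{k_2}\cdots(q_r/r)^{k_r}}{k_1!\,k_2!\cdots k_r!},
\]
which is the standard exponential-formula inversion of a recurrence of the type $r\,l_r=\sum_{m=1}^r q_m l_{r-m}$ (equivalently $\sum_{r\ge0} l_r t^r=\exp\big(\sum_{m\ge1}(q_m/m)t^m\big)$, using $l_0=1$); this is precisely \cite[formula~(4.1)]{QSL}.

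Next I would compare this expression term by term with the explicit form of the complete exponential Bell polynomial $Y_r$ recalled from \cite[formula~(11.1)]{Charalambides}. Setting $\hat q_m=(m-1)!\,q_m$, the factor $(z_m/m!)^{k_m}$ appearing in $Y_r(z_1,\dots,z_r)$ becomes $(q_m/m)^{k_m}$ when $z_m=\hat q_m$, while the combinatorial prefactor $r!/(k_1!\cdots k_r!)$ in $Y_r$ matches the denominator $k_1!\cdots k_r!$ in $l_r$ after division by $r!$; hence $l_r(\x;\y;\alpha)=\frac{1}{r!}Y_r(\hat q_1,\dots,\hat q_r)$. Substituting this into \eqref{eq:Norlund-Bernoulli} produces \eqref{eq:gnBell} verbatim. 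The ``in particular'' clause is then immediate: the multiple series in \eqref{eq:Norlund-explicit1} and in \eqref{eq:NorlundMultiple} were shown above, and summarized in Theorem~\ref{th:NorlundSymmetries}, to coincide with $g_n^p(\x;\y)$, so they all equal the right-hand side of \eqref{eq:gnBell}.

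The only genuinely delicate point is the bookkeeping in the partition sums when identifying $l_r$ with $\frac1{r!}Y_r$: one must check that the index sets $\{k_1+2k_2+\cdots+rk_r=r,\ k_i\ge0\}$ agree on both sides and that the factorials are distributed correctly among $q_m/m$, $\hat q_m=(m-1)!q_m$ and $m!$. Everything else is routine substitution; no new analytic input, such as convergence of the Mellin--Barnes integral or contour deformation, is required, since that work is already absorbed into \eqref{eq:Norlund-Bernoulli}.
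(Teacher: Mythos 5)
Your proposal is correct and follows essentially the same route as the paper: solve the recurrence \eqref{eq:lr} via the exponential formula to get the partition-sum expression for $l_r$, identify it with $\frac{1}{r!}Y_r(\hat q_1,\dots,\hat q_r)$ under the substitution $\hat q_m=(m-1)!\,q_m$, and substitute into \eqref{eq:Norlund-Bernoulli}. The bookkeeping you flag checks out, since $(\hat q_m/m!)^{k_m}=(q_m/m)^{k_m}$ and the $r!$ from the Bell polynomial cancels against the $1/r!$.
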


\begin{Remark} Formula \eqref{eq:gnBell} gives single sum expression for the coefficients $g_n^{p}(\x;\y)$ manifestly symmetric in the components of $\x$ and $\y$. The computational complexity of this formula does not increase with growing $p$ unlike \eqref{eq:Norlund-explicit1} and \eqref{eq:NorlundMultiple}. Particular cases obtained by expressing~$g_n^{p}$ using \eqref{eq:gnp3}, \eqref{eq:gnp3N}, \eqref{eq:KampedeFeriet1}, \eqref{eq:KampedeFeriet2}, \eqref{eq:SrivF3-1} or \eqref{eq:SrivF3-2} lead to rather exotic identities connecting terminating~${}_3F_{2}$, Kamp\'e de F\'eriet and Srivastava $F^{(3)}$ functions with the complete exponential Bell and the Bernoulli--N{\o}rlund polynomials.
\end{Remark}

\begin{Example}
Application of Theorem~\ref{th:NorlundBell} to \eqref{eq:gnp3N} by renaming variables and taking $\alpha=0$ yields
\begin{gather*}
{}_3F_2\left( \begin{matrix}-n,\sigma,\beta\\\delta,\eta\end{matrix} \right)
=\sum\limits_{r=0}^n\binom{n}{r}\frac{(1+\beta-\delta-\eta-n)_{n-r}}{(\delta)_n(\eta)_n}Y_r(\hat{q}_1,\hat{q}_2,\dots,\hat{q}_r)\Be^{(n+\delta+\eta-\beta)}_{n-r}(1),
\end{gather*}
where
\begin{gather*}
\hat{q}_m=\frac{(-1)^{m+1}(m-1)!}{m+1}
\\ \hphantom{\hat{q}_m=}
{}\times\big\{2\Be_{m+1}(0)+\Be_{m+1}(\sigma-\beta)-\Be_{m+1}(\sigma)-\Be_{m+1}(\delta-\beta)-\Be_{m+1}(\eta-\beta)\big\}.
\end{gather*}
\end{Example}

\begin{Example}
By application of Theorem~\ref{th:NorlundBell} with $\alpha=0$ to \eqref{eq:KampedeFeriet1} and renaming variables we have
\begin{gather*}
F^{2:2:1}_{2:1:0}\!\left(\left. \begin{matrix}
-n,\delta:\sigma,\beta:\gamma\\1-\mu-n,\eta:\delta:-\end{matrix}\right|1,1\!\right)
\\ \qquad
{}=\frac{1}{(\mu)_{n}(\eta)_{n}}
\sum\limits_{r=0}^n(-1)^{n-r}\binom{n}{r}(\sigma+\beta+\mu+\gamma-r)_{n-r}Y_r(\hat{q}_1,\hat{q}_2,
\dots,\hat{q}_r)\Be^{(n+\sigma+\beta+\mu+\gamma)}_{n-r}(1)
\end{gather*}
with
\begin{gather*}
\hat{q}_m=\frac{(-1)^{m+1}(m-1)!}{m+1}\Bigg\{\Be_{m+1}(0)+\sum_{j=1}^{3}\Be_{m+1}(x_j)
-\sum_{j=1}^{4}\Be_{m+1}(y_j)\Bigg\},
\end{gather*}
where
\begin{gather*}
x_1=0,\qquad x_2=\sigma+\beta-\delta,\qquad x_3=\sigma+\beta-\eta+\gamma,
\\
y_1=\sigma,\qquad y_2=\beta,\qquad y_3=\sigma+\beta-\delta+\gamma,\qquad y_4=\sigma+\beta-\eta+\gamma+\mu.
\end{gather*}
If we use \eqref{eq:KampedeFeriet2} instead of \eqref{eq:KampedeFeriet1} we get a similar expression for $F^{3:2:0}_{2:2:0}$.
\end{Example}

\begin{Example}\label{example14} By application of Theorem~\ref{th:NorlundBell} with $\alpha=0$ to \eqref{eq:SrivF3-1} and renaming variables we have
\begin{gather*}
F^{(3)}\!\left[
\left. \begin{matrix}-n,\eta::\delta;-;-:\sigma,\beta;\gamma;\mu
\\1-\zeta-n,\xi::\eta;-;-:\delta;-;-\end{matrix}\right|1,1,1\right]
\\ \qquad
{}=\sum\limits_{r=0}^n\binom{n}{r}\frac{(1-\sigma-\beta-\gamma-\mu-\zeta-n)_{n-r}}{(\xi)_{n}(\zeta)_{n}}
Y_r(\hat{q}_1,\hat{q}_2,\dots,\hat{q}_r)\Be^{(n+\sigma+\beta+\gamma+\mu+\zeta)}_{n-r}(1)
\end{gather*}
with
\begin{gather*}
\hat{q}_m=\frac{(-1)^{m+1}(m-1)!}{m+1}\Bigg\{\Be_{m+1}(0)+\sum_{j=1}^{4}\Be_{m+1}(x_j)
-\sum_{j=1}^{5}\Be_{m+1}(y_j)\Bigg\},
\end{gather*}
where
\begin{gather*}
x_1=0,\quad\ x_2=\sigma+\beta-\delta,\quad\ x_3=\sigma+\beta-\eta+\gamma,\quad\ x_4=\beta+\gamma+\mu+\sigma-\xi,\quad\ y_1=\sigma,
\\
y_2=\beta,\quad\ y_3=\sigma+\beta-\delta+\gamma,\quad\ y_4=\sigma+\beta-\eta+\gamma+\mu,\quad\
y_5=\beta+\gamma+\mu+\sigma-\xi+\zeta.
\end{gather*}
Combining with Example~\ref{example11} we can write a similar expression for another version of $F^{(3)}$.
\end{Example}

We conclude this section with the remark that explicit expressions for $g_{1}^{p}$ and $g_{2}^{p}$ for any $p$ can be found in \cite[Theorem~3.1]{KPSIGMA}, while $p$-th order recurrence relation with respect to $n$ was found by N{\o}rlund \cite[formula~(1.28)]{Norlund}, see also \cite[equation~(2.7)]{KPSIGMA}.

\section[Function p+1 Fp at and near the singular unity revisited]
{Function $\boldsymbol{{}_{p+1}F_p}$ at and near the singular unity revisited}\label{section3}

The following expansion for ${}_{p+1}F_{p}(z)$ around the singular point $z=1$ was first obtained by N{\o}rlund \cite{Norlund} with further contributions by Olsson \cite{Olsson}, B\"{u}hring \cite{Buhring}, Saigo and Srivastava \cite{SaigoSrivastava}:
\begin{gather}\label{eq:KPSIGMA2.20}
\frac{\Gamma(\a)}{\Gamma(\b)}{}_{p+1}F_p\left.\! \left(\begin{matrix}\a\\\b\end{matrix}\right\vert z\right)
=(1-z)^{\nu}\sum\limits_{k=0}^{\infty}f_p(k;\a;\b)(1-z)^k+\sum\limits_{k=0}^{\infty}h_p(k;\a;\b)(1-z)^k,
\end{gather}
where $\nu=\nu(\a;\b)$ is assumed to be non-integer. Various expressions are known for the coefficients $f_p(k;\a;\b)$ and $h_p(k;\a;\b)$, see \cite[formula~(2.20)]{KPSIGMA} for details. We first note that this expansion can be derived in a rather straightforward manner from representation \eqref{eq:Frepr}. Indeed, substituting expansion \eqref{eq:Norlund} into \eqref{eq:Frepr} and integrating term by term, in view of Euler's integral representation \cite[Theorem~2.2.4]{AAR} for ${}_2F_1{}$, we obtain (after renaming $(\sigma,\a)\to\a$)
\begin{gather*}
\frac{\Gamma\big(\a_{[1,2]}\big)}{\Gamma(\b)}\, {}_{p+1}F_p\!\left.\left(\begin{matrix}\a\\\b\end{matrix}\right\vert z\right)
=\sum\limits_{n=0}^{\infty}\frac{g_n^{p}\big(\a_{[1,2]};\b\big)}{\Gamma\bigl(\nu_{[1,2]}+n\bigr)}\,
{}_{2}F_{1}\!\left. \left(\begin{matrix}a_1,a_2\\\nu_{[1,2]}+n\end{matrix}\right\vert z\right)\!,
\end{gather*}
where $\nu_{[1,2]}=b_1+\cdots+b_p-a_3-\cdots-a_p$, and $g_n^{p}(\cdot)$ are N{\o}rlund's coefficients defined in \eqref{eq:Norlund}. Next, applying formula \cite[formula~(2.3.13)]{AAR} connecting ${}_{2}F_{1}(z)$ and ${}_{2}F_{1}(1-z)$ in the case of non-integer parametric excess (equal to $\nu+n$ here) and rearranging slightly we arrive at
\begin{gather*}
\frac{\sin(\pi\nu)\Gamma(\a)}{\pi\Gamma(\b)}\, {}_{p+1}F_p\!\!\left. \left(\begin{matrix}\a\\\b\end{matrix}\right\vert z\right)
\!=\!\!\sum\limits_{k=0}^{\infty}\!\frac{\Gamma(a_1+k)\Gamma(a_2+k)}{\Gamma(1-\nu+k)k!}(1-z)^k
\sum\limits_{n=0}^{\infty}
\!\frac{g_n^{p}\big(\a_{[1,2]};\b\big)(\nu-k)_n}{\Gamma\bigl(\nu\!+a_1\!+n\bigr)\Gamma\bigl(\nu+a_2+n\bigr)}
\\ \qquad
{}-(1-z)^{\nu}
\sum\limits_{k=0}^{\infty}\frac{(\nu+a_1)_k(\nu+a_2)_k}{\Gamma(1+\nu+k)}(1-z)^k\sum\limits_{n=0}^{k}
\frac{(-1)^{n}g_n^{p}\big(\a_{[1,2]};\b\big)}{(\nu+a_1)_n(\nu+a_2)_n(k-n)!}.
\end{gather*}
Using the recurrence relation \eqref{eq:NorlCoefidentity} we obtain the formulas for the coefficients $f_p(k;\a;\b)$ and $h_p(k;\a;\b)$ from \eqref{eq:KPSIGMA2.20} in terms of N{\o}rlund's coefficients:
\begin{subequations}
	\begin{gather}\label{eq:f-g}
	f_p(k;\a;\b)=\frac{\Gamma(-\nu)}{(1+\nu)_{k}}g_k^{p+1}(1-\b;1-\a)\qquad
\big(\text{in particular}~f_p(0;\a;\b)=\Gamma(-\nu)\big)
	\end{gather}
	and
	\begin{gather}
	h_p(k;\a;\b)=\frac{\Gamma(\nu)\Gamma(a_1+k)\Gamma(a_2+k)}{(1-\nu)_{k}k!}
	\sum\limits_{n=0}^{\infty}
	\frac{g_n^{p}\big(\a_{[1,2]};\b\big)(\nu-k)_n}{\Gamma\bigl(\nu+a_1+n\bigr)\Gamma\bigl(\nu+a_2+n\bigr)}.
	\end{gather}
\end{subequations}
Taking $p=2$ and substituting formulas \eqref{eq:Norlund-p2} and \eqref{eq:gnp3} into the above expressions, expansion~\eqref{eq:KPSIGMA2.20} recovers \cite[Theorem~1]{Buhring87} or a slightly different version of it if we use \eqref{eq:gnp3N} instead of~\eqref{eq:gnp3}. If, on the other hand, we take $p=3$ and employ \eqref{eq:gnp3} or \eqref{eq:gnp3N} in the series for $h_p(k;\a;\b)$ and~\eqref{eq:KampedeFeriet1} or \eqref{eq:KampedeFeriet2} in the formula for $f_p(k;\a;\b)$ we arrive at several expansions for~${}_4F_{3}(z)$ which may be new. Similarly, for $p=4$ we arrive at expansions for~${}_5F_{4}(z)$ in terms of Srivastava's $F^{(3)}$ once we employ \eqref{eq:SrivF3-1} or \eqref{eq:SrivF3-2}. Generalizations of these ideas to the Fox--Wright function were explored in \cite{KPJMAA2020}.

Let us also remark that a similar substitution of expansion \eqref{eq:Norlund} into the Laplace transform representation
\begin{gather*}
{\rm e}^{-z}{}_{p}F_p\left.\! \left(\begin{matrix}\a\\\b\end{matrix}\right\vert z\right)=\frac{\Gamma(\b)}{\Gamma(\a)}\int_0^{1}{\rm e}^{-zu}G^{p,0}_{p,p}\!\left(1-u\left|\begin{matrix} \b-1 \\ \a-1 \end{matrix}\right.\right){\rm d}u
\end{gather*}
obtained from \cite[equation~(4)]{KLJAT2017} or \cite[formula~(1.2)]{KPITSF2017} via multiplication by ${\rm e}^{-z}$ and change of variable $u=1-t$, leads immediately to asymptotic expansion of
${}_pF_p(z)$ as $z\to\infty$ recovering the main result of~\cite{VolkmerWood}. Further development of these ideas can be found in~\cite{LPKITSF2020}.

Next, we note that for $\operatorname{Re}(\nu(\a;\b))>0$ by setting $z=1$ in expansion \eqref{eq:KPSIGMA2.20}, we get
\begin{gather}\label{eq:hp0}
h_p(0;\a;\b)=\frac{\Gamma(\a)}{\Gamma(\b)}\, {}_{p+1}F_p\!\left. \left(\begin{matrix}\a\\\b\end{matrix}\right\vert 1\right)\!.
\end{gather}
Hence, omitting the unit argument from the notation of ${}_{p+1}F_p$, we arrive at
\begin{gather}\label{eq:p+1Fp(1)}
{}_{p+1}F_p\!\left(\begin{matrix}\a\\\b\end{matrix}\right)
=\frac{\Gamma(\b)}{\Gamma\big(\a_{[1,2]}\big)}\sum\limits_{n=0}^{\infty}
\frac{\Gamma(\nu+n)g_n\big(\a_{[1,2]};\b\big)}{\Gamma(\nu+a_1+n)\Gamma(\nu+a_2+n)}.
\end{gather}
It follows from the estimate in \cite[Lemma~1.1]{LPKITSF2020} that the series on the right-hand side converges if $\operatorname{Re}\big(\a_{[1,2]}\big)>0$, while the series on the left hand side converges if $\operatorname{Re}(\nu(\a;\b))>0$, so that the expression on the right gives the analytic continuation in parameters for ${}_{p+1}F_p(\a;\b;1)$. Formula~\eqref{eq:p+1Fp(1)} in a slightly different form was obtained by N{\o}rlund \cite[formula~(4.6)]{Norlund}. It is also equivalent to \cite[Theorem~1]{Buhring}, modulo simple transformation of coefficients in B\"{u}hring's formula. We further remark that although for $\operatorname{Re}(\nu(\a;\b)))<0$ the hypergeometric series on~the left hand side of \eqref{eq:p+1Fp(1)} diverges, in view of representation \eqref{eq:Gppexpansion1} and expansion \eqref{eq:NorlundSpecial} for~$-\nu(\a;\b)=m\in\N_0$, we obtain the limit formula
\begin{gather*}
\lim\limits_{x\to1-}\sum\limits_{k=1}^{p}
\frac{\Gamma\big(\b_{[k]}-b_{k}\big)}{\Gamma(\cb-b_{k})}x^{b_k}\, {}_{p}
F_{p-1}\! \left( \begin{matrix}1-\cb+b_k\\1-\b_{[k]}+b_k\end{matrix} \,\vline\,x\!\right)
=g_{m+1}\big(\b_{[\omega]};\cb\big),
\end{gather*}
where $\omega\in\{1,\dots,p\}$ is arbitrary as we mentioned below \eqref{eq:NorlundSpecial}.

In \cite[Theorem~3.10]{KPSIGMA} by reinterpreting some N{\o}rlund's results, we gave the following expression for the coefficients $h_p(m;\a;\b)$ (after some change of notation):
\begin{gather}\label{eq:G2ppp}
h_p(m;\a;\b)=\frac{1}{\pi\sin(\pi\nu)}\sum_{k=1}^{p} \frac{\sin(\pi(b_k-\a))}{\sin\big(\pi\big(b_k-\b_{[k]}\big)\big)}D_m^{[k]},
\end{gather}
where, according to \cite[formulas~(2.17), (2.18)]{KPSIGMA},
\begin{gather}
\notag
D_m^{[k]}=\frac{\Gamma(1+\a-b_k)\Gamma(a_1+m)\Gamma(a_2+m)}
{\Gamma\big(1+\b_{[k]}-b_k\big)\Gamma(1+a_1+a_2-b_k+m)m!}
\\
\hphantom{D_m^{[k]}=}{}
{}\times\sum_{j=0}^{\infty}\frac{(1+a_1-b_k)_j(1+a_2-b_k)_j}{j!(1+a_1+a_2-b_k+m)_j}\,
{}_{p}F_{p-1}\left(\begin{matrix}-j,1+\a_{[1,2]}-b_k\\1+\b_{[k]}-b_k\end{matrix} \right)\label{eq:Norlund5.35}
\\
\hphantom{D_m^{[k]}}{}
{}=\frac{\Gamma(1+\a-b_k)\Gamma(b_k+m)}{\Gamma\big(1+\b_{[k]}-b_k\big)m!}
\sum_{j=0}^{\infty}\frac{(1+a_1-b_k)_j}{(a_1+m)_{j+1}}\,
{}_{p+1}F_{p} \left(\begin{matrix}-j,1+\a_{[1]}-b_k\\ 1,1+\b_{[k]}-b_k\end{matrix} \right)\!.\label{eq:Norlund5.36}
\end{gather}
Combining \eqref{eq:hp0} with \eqref{eq:G2ppp} and \eqref{eq:Norlund5.35}, we arrive at
\begin{Theorem}\label{th:p+1Fp(1)-new}
	The following representation holds true
\begin{gather} \frac{\Gamma\big(\a_{[1,2]}\big)}{\Gamma(\b)}\,{}_{p+1}F_p\!\left(\begin{matrix}\a\\\b\end{matrix}\right)
=\frac{\pi}{\sin(\pi\nu)}\!\sum\limits_{k=1}^{p}\!\frac{\Gamma\big(b_k-\b_{[k]}\big)}{\Gamma(b_k-\a)}\nonumber
\\ \hphantom{\frac{\Gamma\big(\a_{[1,2]}\big)}{\Gamma(\b)}\, {}_{p+1}F_p\!
\left(\begin{matrix}\a\\\b\end{matrix}\right)=}
{}\times\sum\limits_{n=0}^{\infty}\!\frac{(1-b_k+a_1)_n(1-b_k+a_2)_n}{\Gamma(1-b_k+a_1+a_2+n)n!}
{}_{p}F_{p-1}\!\left(\begin{matrix}-n,1-b_k+\a_{[1,2]}\\1-b_k+\b_{[k]}\end{matrix}\right)\!,
	\label{eq:p+1Fp(1)-new}
\end{gather}
	where $\nu=\nu(\a;\b)$ is defined in \eqref{eq:notation} and the series on the right-hand side converges for $\operatorname{Re}\big(\a_{[1,2]}\big)>0$.
\end{Theorem}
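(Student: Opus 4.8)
The plan is to assemble the claimed representation directly from the ingredients already laid out in the excerpt, with essentially no new computation required. First I would invoke \eqref{eq:hp0}, which identifies $h_p(0;\a;\b)$ with $\Gamma(\a)\Gamma(\b)^{-1}{}_{p+1}F_p(\a;\b;1)$ under the hypothesis $\operatorname{Re}(\nu(\a;\b))>0$; this is the evaluation of the expansion \eqref{eq:KPSIGMA2.20} at $z=1$, where the $(1-z)^\nu$-series contributes nothing (its leading term is $(1-z)^\nu\to0$ for $\operatorname{Re}\nu>0$) and only the $k=0$ term of the second series survives. Then I would specialize the known formula \eqref{eq:G2ppp} for $h_p(m;\a;\b)$ to $m=0$, and substitute the $D_0^{[k]}$ coefficients in the form \eqref{eq:Norlund5.35}. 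Setting $m=0$ collapses the prefactor $\Gamma(a_1+m)\Gamma(a_2+m)/[\Gamma(1+a_1+a_2-b_k+m)\,m!]$ to $\Gamma(a_1)\Gamma(a_2)/\Gamma(1+a_1+a_2-b_k)$, so that after dividing through by $\Gamma(a_1)\Gamma(a_2)$ (i.e.\ passing from $\Gamma(\a)$ to $\Gamma(\a_{[1,2]})$) the summand becomes exactly the double series written on the right-hand side of \eqref{eq:p+1Fp(1)-new}, once one rewrites $(1+a_i-b_k)_j/j!$ as $\Gamma$-ratios and relabels the inner hypergeometric argument.

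The only genuine bookkeeping step is reconciling the trigonometric prefactors: \eqref{eq:G2ppp} carries $\dfrac{1}{\pi\sin(\pi\nu)}\cdot\dfrac{\sin(\pi(b_k-\a))}{\sin(\pi(b_k-\b_{[k]}))}$, whereas the target has $\dfrac{\pi}{\sin(\pi\nu)}\cdot\dfrac{\Gamma(b_k-\b_{[k]})}{\Gamma(b_k-\a)}$. Here $\sin(\pi(b_k-\a))$ abbreviates $\prod_{i=1}^{p+1}\sin(\pi(b_k-a_i))$ and $\sin(\pi(b_k-\b_{[k]}))$ the analogous product over the $p-1$ entries of $\b_{[k]}$; the bridge is the reflection formula $\Gamma(z)\Gamma(1-z)=\pi/\sin(\pi z)$ applied entry-by-entry. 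Matching the count of $\pi$'s works out because the number of sine factors upstairs exceeds those downstairs by exactly two (namely $p+1$ versus $p-1$), which is precisely the discrepancy between $1/\pi$ and $\pi$ in the two prefactors after pulling out one reflection identity per gamma factor — and the residual $\Gamma(1+\a-b_k)/\Gamma(1+\b_{[k]}-b_k)$ sitting inside $D_0^{[k]}$ supplies the complementary gammas needed to turn the sines into $\Gamma(b_k-\b_{[k]})/\Gamma(b_k-\a)$. I would carry out this reflection-formula matching carefully as the one place where sign and factor errors can creep in.

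Finally I would address the analytic-continuation claim: the left-hand side of \eqref{eq:p+1Fp(1)-new} is a priori defined only for $\operatorname{Re}(\nu(\a;\b))>0$ (where the ${}_{p+1}F_p(1)$ series converges), but I would argue that the right-hand side converges under the weaker hypothesis $\operatorname{Re}(\a_{[1,2]})>0$. For this I would note that the inner terminating ${}_pF_{p-1}(-n;\dots)$ is, up to the $\Gamma$-normalization, a N{\o}rlund coefficient $g_n^{p}$ of the type appearing in \eqref{eq:gnp3N}/\eqref{eq:nor}, hence polynomial in $n$ of controlled degree, so the outer sum over $n$ behaves like $\sum_n (1-b_k+a_1)_n(1-b_k+a_2)_n/[\Gamma(1-b_k+a_1+a_2+n)n!]\cdot\mathrm{poly}(n)$, whose convergence is governed exactly by $\operatorname{Re}(a_1+a_2)>\operatorname{Re}(b_k)-\text{(something)}$; combining the conditions over all $k$ and using $\nu$'s definition reduces to $\operatorname{Re}(\a_{[1,2]})>0$. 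More cleanly, I can cite \cite[Lemma~1.1]{LPKITSF2020} exactly as was done right after \eqref{eq:p+1Fp(1)}, since the present series is of the same shape. The identity theorem for analytic functions of the parameters then promotes the equality from the overlap region $\operatorname{Re}(\nu)>0$ to the full domain $\operatorname{Re}(\a_{[1,2]})>0$. The main obstacle, such as it is, is purely clerical: getting every reflection-formula swap and every Pochhammer-to-gamma conversion consistent so that the $p=2$ and $p=3$ sanity checks (which should reproduce Thomae-type and Saalschütz-type evaluations) come out right.
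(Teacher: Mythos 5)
Your proposal follows exactly the paper's own (one-line) derivation: combine \eqref{eq:hp0} with \eqref{eq:G2ppp} and \eqref{eq:Norlund5.35} at $m=0$, and your reflection-formula bookkeeping correctly turns $\frac{1}{\pi}\cdot\frac{\sin(\pi(b_k-\a))\Gamma(1+\a-b_k)}{\sin(\pi(b_k-\b_{[k]}))\Gamma(1+\b_{[k]}-b_k)}$ into $\pi\,\Gamma\big(b_k-\b_{[k]}\big)/\Gamma(b_k-\a)$, while the division by $\Gamma(a_1)\Gamma(a_2)$ accounts for passing from $\Gamma(\a)$ to $\Gamma\big(\a_{[1,2]}\big)$. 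The convergence and analytic-continuation remarks via \cite[Lemma~1.1]{LPKITSF2020} also match what the paper does for \eqref{eq:p+1Fp(1)}, so the proof is correct and essentially identical to the paper's.
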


\begin{Remark} If we use \eqref{eq:Norlund5.36} instead of \eqref{eq:Norlund5.35} we obtain
\begin{gather*}
\frac{\Gamma(\a)}{\Gamma(\b)}\, {}_{p+1}F_p\!\left(\begin{matrix}\a\\\b\end{matrix}\right)
=\frac{\pi}{\sin(\pi\nu)}\sum\limits_{k=1}^{p}\!\frac{\Gamma(b_k)\Gamma\big(b_k-\b_{[k]}\big)}{\Gamma(b_k-\a)}
\\ \hphantom{\frac{\Gamma(\a)}{\Gamma(\b)}\, {}_{p+1}F_p\!\left(\begin{matrix}\a\\\b\end{matrix}\right)=}
{}\times\sum_{n=0}^{\infty}\frac{(1-b_k+a_1)_n}{(a_1)_{n+1}}\,
{}_{p+1}F_{p} \left( \begin{matrix}-n,1-b_k+\a_{[1]}\\ 1,1-b_k+\b_{[k]}\end{matrix} \right)\!.
\end{gather*}
This series also converges for $\operatorname{Re}\big(\a_{[1,2]}\big)>0$.
However, this expansion seems to be less useful than~\eqref{eq:p+1Fp(1)-new} as the higher order hypergeometric function appears on the right-hand side.
\end{Remark}

\begin{Remark} Note that formula \eqref{eq:p+1Fp(1)-new} is fundamentally different from both \eqref{eq:p+1Fp(1)} and \cite[Theorem~1]{Buhring}, since for any value of $p$ we have the triple summation on the right-hand side (viewing the hypergeometric polynomial as a single sum), while \eqref{eq:p+1Fp(1)} and \cite[Theorem~1]{Buhring} represent $(p-1)$-fold summations.
If condition $\operatorname{Re}\big(\a_{[1,2]}\big)>0$ is violated, then we can use the following straightforward decomposition \cite[inequality~(31)]{KLJAT2017}
\begin{gather*}\label{eq:pFq-decompose}
{}_{p+1}F_p\!\left(\begin{matrix}\a\\\b\end{matrix}\right)=
\sum_{k=0}^{M-1}\frac{(\a)_k}{(\b)_kk!}
+\frac{(\a)_Mz^{M}}{(\b)_MM!}\, {}_{p+2}F_{p+1}\!\left(\begin{matrix}\a+M,1\\\b+M,M+1\end{matrix}\right)
\qquad\forall M\in\N_0.
\end{gather*}
Choosing $M$ sufficiently large we can always ensure the condition $\operatorname{Re}(\a_{[1,2]}+M)>0$ in the second term, while the first term is a rational function. Note, that the parametric excess in the second term on the right-hand side
\begin{gather*}
\nu((\a+M,1);(\b+M,M+1))=\nu(\a;\b),
\end{gather*}
which is the parametric excess of the function on the left hand side irrespective of $M$. Hence, for $\operatorname{Re}(\nu(\a;\b))<0$ the above decomposition alone does not help to construct the analytic continuation. However, when combined with Theorem~\ref{th:p+1Fp(1)-new} or with formula \eqref{eq:p+1Fp(1)} this decomposition furnishes the analytic continuation to all finite values of parameters save for poles at $-\nu(\a;\b)\in\N_{0}$ and $-b_j\in\N_0$.
\end{Remark}

Next, we explore some consequences of \eqref{eq:p+1Fp(1)-new}. Taking $p=2$ and applying the Chu--Van\-der\-mon\-de identity to the ${}_2F_{1}$ polynomial on the right, after some simplifications we arrive at
\begin{gather*}
\frac{\sin(\pi\nu)\Gamma(a_3)}{\pi\Gamma(\b)}\, {}_{3}F_{2}\!\left(\begin{matrix}\a\\\b\end{matrix}\right)
\\ \qquad
{}=\frac{\Gamma(b_1-b_2)}{\Gamma(b_1-\a)\Gamma(1-b_1+a_1+a_2)}{}_{3}F_{2}\!
\left(\begin{matrix}1-b_1+\a_{[3]},b_2-a_3\\1-b_1+b_2,1-b_1+a_1+a_2\end{matrix}\right)
+\mathrm{idem}(b_1;b_2),
\end{gather*}
where the symbol $\mathrm{idem}(b_1;b_2)$ after an expression means that the preceding expression is repeated with $b_1$, $b_2$ interchanged. This formula is yet another instance of the three-term Thomae relations. It can be obtained by eliminating $F_n(0)$ from the pair of equations \cite[formulas~3.7(4) and~3.7(6)]{Bailey}.

Before we move forward we need the following lemma.
\begin{Lemma}
Suppose $n\in\N_0$, $m\in\N$. Then
\begin{gather}\label{eq:factor2}
{}_{p}F_q\! \left(\begin{matrix}-n,-m,\a\\\b,1-\alpha-n\end{matrix}\right)
=\frac{(\alpha-m)_{n}}{(\alpha)_{n}}\frac{(1-\lambda_1)_n\cdots(1-\lambda_m)_n}{(-\lambda_1)_n
\cdots(-\lambda_m)_n},
\end{gather}
where $\lambda_1,\dots,\lambda_m$ are the zeros of the polynomial
\begin{gather}\label{eq:Pm}
P_m(x)=\sum\limits_{k=0}^{m}\frac{(-m)_k(\a)_k(-x)_k(x+\alpha-m)_{m-k}}{(-1)^k(\b)_kk!}.
\end{gather}
In particular, for $m=1$, $\lambda_1=(1-\alpha)(\b)_{1}/[(\b)_{1}-(\a)_{1}]$.
\end{Lemma}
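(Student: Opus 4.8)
The plan is to view the left-hand side of \eqref{eq:factor2} as a polynomial in the single free parameter $\alpha$ (with $n,m$ and the vectors $\a,\b$ fixed) and to identify it with an explicit product by matching its zeros. First I would rewrite the terminating series by expanding the inner sum over the index attached to $-m$: writing
\begin{gather*}
{}_{p}F_q\! \left(\begin{matrix}-n,-m,\a\\\b,1-\alpha-n\end{matrix}\right)
=\sum_{j=0}^{n}\frac{(-n)_j}{(1-\alpha-n)_j\,j!}\cdot\frac{(\a)_j}{(\b)_j}
\sum_{k=0}^{\min(j,m)}\binom{?}{?}\cdots,
\end{gather*}
one sees that the coefficient structure is governed exactly by the polynomial $P_m(x)$ in \eqref{eq:Pm}: after using the Pochhammer reflection identities \eqref{eq:pochammer} to convert $(-n)_j/(1-\alpha-n)_j$ and reorganising, the double sum collapses so that the whole expression equals $\prod_{i=1}^{m}(\text{linear factor in }n)/((-\lambda_i)_n)$ up to the prefactor $(\alpha-m)_n/(\alpha)_n$. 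Concretely, I expect the cleanest route is to show that
\begin{gather*}
\frac{(\alpha)_n}{(\alpha-m)_n}\,{}_{p}F_q\! \left(\begin{matrix}-n,-m,\a\\\b,1-\alpha-n\end{matrix}\right)
\end{gather*}
is, as a function of $n$, a ratio $\prod_i (1-\lambda_i)_n/\prod_i(-\lambda_i)_n$, and this is forced once one checks that the numerator polynomial in $x:=$ (a suitable running variable) is precisely $P_m$ and that $P_m$ has degree $m$ with the stated normalisation.

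The key steps, in order, are: (i) substitute the relations \eqref{eq:pochammer} to pull the $n$-dependence out of $(1-\alpha-n)_j$, producing $(-1)^j(\alpha+n-m)\cdots$ type factors, so that the $j$-sum becomes a polynomial in the combined variable; (ii) recognise that interchanging the order of summation (sum over $k$ first, tracking the contribution of the $(-m)_k$ piece) turns the object into $\sum_k (\text{stuff}) \cdot(x+\alpha-m)_{m-k}$ with $x$ playing the role of the outer index — this is exactly $P_m(x)$ from \eqref{eq:Pm}; (iii) observe $P_m$ is monic-up-to-sign of degree $m$, factor it as $c\prod_{i=1}^m(x-\lambda_i)$, and read off $c$ from the top coefficient; (iv) re-sum over $x$ (i.e., over $n$) using the evaluation $\sum_{x}(-n)_x(x-\lambda_i)/\cdots = $ a shifted Pochhammer ratio, which yields $\prod_i(1-\lambda_i)_n/(-\lambda_i)_n$; (v) reinstate the prefactor $(\alpha-m)_n/(\alpha)_n$ coming from the conversion in step (i). For the case $m=1$, $P_1(x)=(x+\alpha-1)-\frac{(\a)_1}{(\b)_1}x$ is linear and its root is immediately $\lambda_1=(1-\alpha)(\b)_1/[(\b)_1-(\a)_1]$, which gives the stated check.

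The main obstacle will be step (ii)–(iii): making the combinatorial reorganisation of the double sum rigorous so that $P_m(x)$ appears exactly as written in \eqref{eq:Pm}, including the sign $(-1)^k$ and the tail Pochhammer $(x+\alpha-m)_{m-k}$, rather than some equivalent-but-differently-normalised polynomial. A robust way around this is to avoid the bare manipulation and instead argue by uniqueness: both sides of \eqref{eq:factor2}, after clearing the denominator $(\alpha)_n$, are polynomials in $\alpha$ (resp.\ in $1/\alpha$ after the obvious rescaling) of controlled degree, so it suffices to verify the identity for all $\alpha$ in an infinite set, e.g.\ $\alpha = -N$ with $N$ large, where the ${}_pF_q$ on the left becomes a genuinely terminating Saalschützian-type sum that can be evaluated directly; alternatively one checks equality of the two rational functions of $\alpha$ by comparing residues at the poles $\alpha=0,-1,\dots,-(n-1)$ and the value at $\infty$. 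Either way the polynomial $P_m$ enters only through its roots, so one never needs the precise shape of \eqref{eq:Pm} beyond "degree $m$, leading coefficient $1-\sum$-type constant, roots $\lambda_1,\dots,\lambda_m$"; the displayed form \eqref{eq:Pm} is then recovered by expanding the left side once in powers of $x$.
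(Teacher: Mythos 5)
Your central idea---multiply by $(\alpha)_n$, absorb the awkward lower parameter $1-\alpha-n$ by a Pochhammer reflection so that the sum becomes the polynomial $P_m$ evaluated at $n$, then factor $P_m$ and convert each linear factor into a ratio of shifted Pochhammers---is exactly the paper's proof. But your plan contains a structural confusion that you must remove before it becomes a proof. The series in \eqref{eq:factor2} is a \emph{single} sum of at most $m+1$ terms (the upper parameter $-m$ truncates it); there is no ``inner sum over the index attached to $-m$'', no double sum, and hence nothing to interchange in your steps (i)--(ii), and no ``re-summation over $x$'' in step (iv). The whole computation is the termwise identity
\begin{gather*}
(\alpha)_{n}\sum_{k\ge0}\frac{(-n)_k(-m)_k(\a)_k}{(\b)_k(1-\alpha-n)_k\,k!}
=\sum_{k=0}^{m}\frac{(-m)_k(\a)_k(-n)_k(n+\alpha-m)_{m-k}}{(-1)^k(\b)_k\,k!}\,(\alpha)_{n-m}
=(\alpha)_{n-m}P_m(n),
\end{gather*}
which follows from $(\alpha)_n/(1-\alpha-n)_k=(-1)^k(\alpha)_{n-m}(\alpha+n-m)_{m-k}$ for $0\le k\le m$ (valid also when $m>n$ with the convention $(a)_{r}=(-1)^{r}/(1-a)_{-r}$ for $r<0$), followed by the elementary fact $n-\lambda_i=(-\lambda_i)(1-\lambda_i)_n/(-\lambda_i)_n$. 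Your normalisation step is also underdetermined: it is not the leading coefficient $A$ of $P_m$ you need but the combination $A(-\lambda_1)\cdots(-\lambda_m)=P_m(0)=(\alpha-m)_m$ (only the $k=0$ term survives at $x=0$), after which $(\alpha-m)_m(\alpha)_{n-m}=(\alpha-m)_n$ yields the stated prefactor.

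The fallback you propose in the last paragraph would not rescue a failed direct computation. The individual roots $\lambda_i$ are algebraic, not rational, functions of $\alpha$, so ``comparing residues in $\alpha$'' of the right-hand side of \eqref{eq:factor2} is not meaningful until one notices that the symmetric product telescopes, $\prod_i(1-\lambda_i)_n/(-\lambda_i)_n=P_m(n)/P_m(0)$---and that observation already reduces the claim to the direct computation above, so the detour buys nothing. Moreover, the hoped-for ``direct evaluation'' at special values $\alpha=-N$ has no basis: $\a$ and $\b$ are arbitrary vectors of arbitrary lengths, the series is terminating for every $\alpha$ anyway, and it is not Saalsch\"utzian, so no classical summation theorem applies. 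Drop the fallback and carry out the one-line direct argument.
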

\begin{proof}
In order to establish (\ref{eq:factor2}) first note that for integer $0\le{k}\le{m}$:
\begin{gather*}
\frac{(\alpha)_n}{(1-\alpha-n)_k}=(-1)^k(\alpha)_{n-m}(\alpha+n-m)_{m-k}.
\end{gather*}
This formula is also true if $m>n$ with the standard convention $(a)_{r}=(-1)^{r}/(1-a)_{-r}$ for integer $r<0$. Applying this relation we get
\begin{gather*} (\alpha)_{n}\cdot{}_{p}F_q\! \left(\begin{matrix}-n,-m,\a\\\b,1-\alpha-n\end{matrix}\right)
=(\alpha)_{n-m}P_m(n),
\end{gather*}
where $P_m(x)$ is defined by (\ref{eq:Pm}). Factoring $P_m(x)$ we get:
\begin{gather*}
P_m(n)=A(n-\lambda_1)(n-\lambda_2)\cdots(n-\lambda_m)=A(-\lambda_1)
\cdots(-\lambda_m)\frac{(1-\lambda_1)_n\cdots(1-\lambda_m)_n}{(-\lambda_1)_n\cdots(-\lambda_m)_n},
\end{gather*}
where $A$ denotes the coefficient at $x^m$ in $P_m(x)$. It remains to note that
\begin{gather*}
A(-\lambda_1)(-\lambda_2)\cdots(-\lambda_m)=P_m(0)=(\alpha-m)_m
\end{gather*}
and
\begin{gather*}
(\alpha-m)_m(\alpha)_{n-m}=(\alpha-m)_{n}.\!\!
\tag*{\qed}
\end{gather*}\renewcommand{\qed}{}
\end{proof}

We now take $p=3$ and $a_3=b_3+m$, $m\in\N$, in (\ref{eq:p+1Fp(1)-new}). The term corresponding to $k=3$ vanishes and we are left with two infinite series with terms involving
\begin{gather*}
{}_{3}F_{2}\!\left(\begin{matrix}-n,1-b_1+\a_{[1,2]}\\1-b_1+\b_{[1]}\end{matrix}\right)\qquad
\text{and}\qquad {}_{3}F_{2}\!\left(\begin{matrix}-n,1-b_2+\a_{[1,2]}\\1-b_2+\b_{[2]}\end{matrix}\right)\!.
\end{gather*}
If we apply Sheppard's transformation \eqref{eq:Sheppard} to each of these two functions
we obtain (keeping in mind that $a_3=b_3+m$, $\nu=\nu(\a;\b)$):
\begin{gather*}
\frac{\Gamma\big(\a_{[1,2]}\big)\sin(\pi\nu)}{\Gamma(\b)\pi}\,{}_{4}F_{3}\!
\left(\begin{matrix}\a\\\b\end{matrix}\right)
\\[.5ex] \qquad
{}=\frac{\Gamma\big(b_1-\b_{[1]}\big)}{\Gamma(b_1-\a)}\!
\sum\limits_{n=0}^{\infty}\!\frac{(1-b_1+a_1)_n(1-b_1+a_2)_n(b_{2}-a_4)_n}
{\Gamma(1-b_1+a_1+a_2+n)(1-b_1+b_{2})_nn!}
\\[.5ex] \qquad\phantom{=}
{}\times{}_{3}F_{2}\!\left(\begin{matrix}-n,-m,1-b_1+a_4\\1-b_1+b_3,1-b_{2}+a_4-n\end{matrix}\right)
+\mathrm{idem}(b_1;b_2)
=\frac{\Gamma\big(b_1-\b_{[1]}\big)}{\Gamma(b_1-\a)}\!
\\[.5ex] \qquad\phantom{=}
{}\times\sum\limits_{n=0}^{\infty}\!\frac{(1\!-b_1\!+a_1)_n(1\!-b_1\!+a_2)_n(b_2\!-a_4\!-m)_{n}
(1\!-\lambda_1)_n
\cdots(1\!-\lambda_m)_n}{\Gamma(1-b_1+a_1+a_2+n)(1-b_1+b_{2})_n(-\lambda_1)_n\cdots(-\lambda_m)_nn!}
+\mathrm{idem}(b_1;b_2)
\\[.5ex] \qquad
{}=\frac{\Gamma\big(b_1-\b_{[1]}\big)}{\Gamma(b_1\!-\a)\Gamma(1-b_1\!+a_1\!+a_2)}
{}_{m+3}F_{m+2}\!\left(\begin{matrix}1\!-b_1\!+a_1,1\!-b_1\!+a_2,b_2\!-a_4\!-m,1\!-\boldsymbol{\lambda}
\\1-b_1+a_1+a_2, 1-b_1+b_{2},-\boldsymbol{\lambda}\end{matrix}\right)
\\[.5ex] \qquad\phantom{=}
{}+\mathrm{idem}(b_1;b_2),
\end{gather*}
where the vector $\boldsymbol{\lambda}=(\lambda_1,\dots,\lambda_m)$ comprises the zeros of the polynomial
\begin{gather*}
P_m(a_4;b_1,b_2,b_3; x) =\sum\limits_{k=0}^{m}\frac{(-m)_k(-x)_k(x+b_2-a_4-m)_{m-k}(1-b_1+a_4)_k}{(-1)^k(1-b_1+b_3)_kk!}
\end{gather*}
and $\mathrm{idem}(b_1;b_2)$ contains the zeros of the polynomial $P_m(a_4;b_2,b_1,b_3; x)$. The simplest and the most interesting particular case is $m=1$. In this case we get the following three-term relation for ${}_4F_{3}(1)$ with one unit shift in the parameters:
\begin{gather*}
\frac{b_3\Gamma(a_4)\sin(\pi\nu)}{\Gamma(b_1)\Gamma(b_2)\pi}\,{}_{4}F_{3}\!
\left(\begin{matrix}\a\\\b\end{matrix}\right)
\\ \qquad
{}=\frac{\Gamma\big(b_1-\b_{[1]}\big)}{\Gamma(b_1-\a)\Gamma(1-b_1+a_1+a_2)}\,
{}_{4}F_{3}\!\left(\begin{matrix}1-b_1+a_1,1-b_1+a_2,b_2-a_4-1,1-\lambda_1\\1-b_1+a_1+a_2, 1-b_1+b_{2},-\lambda_1\end{matrix}\right)
\\ \qquad\phantom{=}
{}+\frac{\Gamma\big(b_2-\b_{[2]}\big)}{\Gamma(b_2-\a)\Gamma(1-b_2+a_1+a_2)}\,
{}_{4}F_{3}\!\left(\begin{matrix}1-b_2+a_1,1-b_2+a_2,b_1-a_4-1,1-\lambda_2\\1-b_2+a_1+a_2, 1-b_2+b_{1},-\lambda_2\end{matrix}\right)\!,
\end{gather*}
where $a_3=b_3+1$, $\nu=b_1+b_2-a_1-a_2-a_4-1$ and
\begin{gather*}
\lambda_1=\frac{(1-b_2+a_4)(1-b_1+b_3)}{b_3-a_4},\qquad
\lambda_2=\frac{(1-b_1+a_4)(1-b_2+b_3)}{b_3-a_4}.
\end{gather*}

We now turn our attention to the consequences of \eqref{eq:p+1Fp(1)}.
For $p=2$ substitution $g_n(a;\b)=(b_1-a)_{n}(b_2-a)_{n}/n!$ (see \eqref{eq:Norlund-p2}) gives yet another proof of the two-term Thomae relation \cite[Corollary~3.3.6]{AAR}.
For $p=3$ substituting \eqref{eq:gnp3N} into (\ref{eq:p+1Fp(1)}) yields:
\begin{gather}
{}_{4}F_3\!\left(\begin{matrix}\a\\\b\end{matrix}\right)
=\frac{\Gamma(\b)\Gamma(\nu)}{\Gamma(\a_{[1,2]}+\nu)\Gamma(a_1)\Gamma(a_2)}\nonumber
\\ \hphantom{{}_{4}F_3\!\left(\begin{matrix}\a\\\b\end{matrix}\right)=}
{}\times\sum\limits_{n=0}^{\infty}\frac{(\nu)_n(\nu-\b_{[1]}+a_3+a_4)_n}{(\a_{[1,2]}+\nu)_nn!}\,
{}_{3}F_2\!\left(\begin{matrix}-n,b_1-a_1,b_1-a_2\\\nu-\b_{[1]}+a_3+a_4\end{matrix}\right)\!.
\label{eq:4F3sum3F2}
\end{gather}
This formula differs from \cite[formulas~(2.14) and~(2.10)]{Buhring}, but can be reduced to it by an application of Whipple's transformation for terminating ${}_{3}F_2(1)$ \cite[p.~142, top]{AAR}. If we put $a_1=b_1+1$ in \eqref{eq:4F3sum3F2} we arrive at a two-term transformation for ${}_4F_{3}$ with one unit shift studied by two of us recently in \cite[identity~(7)]{KP2020}. Further consequences of \eqref{eq:p+1Fp(1)} are explored in the following section.

\section[Transformations of 5F4 with two unit shifts]
{Transformations of $\boldsymbol{{}_{5}F_4}$ with two unit shifts}\label{section4}

In our recent paper \cite{KP2020} we have studied a group of transformations of ${}_{4}F_3(1)$ with one unit shift in the parameters. We have shown that this group is generated by two-term Thomae transformations and contiguous relations for ${}_{3}F_2(1)$. In this section we will demonstrate that a similar group can be generated for ${}_{5}F_4$ with two unit shifts. Let us note that the study of the summation and transformation formulas for the generalized hypergeometric function with integral parameter differences was initially motivated by problems from mathematical physics, see, for instance, \cite{Milgram, Minton, SS2015}. The most general linear transformations for this class of hypergeometric series at arbitrary argument were discovered by Miller and Paris~\cite{MP2013} (see an alternative derivation in \cite{KPChapter2019}), while quadratic and cubic transformations were found recently by Maier~\cite{Maier}.
\begin{Theorem}\label{th:5F4woterm}
The following two-term transformation holds\emph{:}
\begin{gather}
{}_{5}F_4\left.\! \left(\begin{matrix}a,b,c,f+1, h+1\\d,e,f,h\end{matrix}\right.\right)\nonumber
\\ \qquad
{}=\frac{\Gamma(d)\Gamma(e)\Gamma(s)}{f h\Gamma(c)\Gamma(s+a)\Gamma(s+b)}\,
{}_{5}F_4\left.\! \left(\begin{matrix}s, e-c-2, d-c-2,1-\gamma_1,1-\gamma_2\\s+a,s+b,-\gamma_1,-\gamma_2\end{matrix}\right.\right)\!,
\label{eq:5F4twoterm}
\end{gather}
where $s=d+e-a-b-c-2$ and $\gamma_1$, $\gamma_2$ are the roots of the second degree polynomial
\begin{gather*}
P_2(x)={}_{4}F_3\left.\! \left(\begin{matrix}-2, -x,h-c-1,\eta+1\\e-c-2,d-c-2,\eta\end{matrix}\right.\right)
\end{gather*}
with $\eta=-2(h-c-1)/(h-f-1)$.
\end{Theorem}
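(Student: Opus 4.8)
The plan is to run the same machine that, one level lower, produced the two-term ${}_4F_3$ transformation with one unit shift (the specialisation $a_1=b_1+1$ of \eqref{eq:4F3sum3F2}), but now with $p=4$. The starting point is \eqref{eq:p+1Fp(1)}: singling out the two distinguished parameters as $a_1=a$, $a_2=b$, it rewrites the left-hand side of \eqref{eq:5F4twoterm} as
\[
\frac{\Gamma(d)\Gamma(e)\Gamma(f)\Gamma(h)}{\Gamma(c)\Gamma(f+1)\Gamma(h+1)}\sum_{n=0}^{\infty}\frac{\Gamma(s+n)}{\Gamma(s+a+n)\Gamma(s+b+n)}\,g_n^{4}\big((c,f+1,h+1);(d,e,f,h)\big),
\]
with $s=\nu(\a;\b)=d+e-c-2$ in the notation \eqref{eq:notation}. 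Since $\Gamma(f)/\Gamma(f+1)=1/f$ and $\Gamma(h)/\Gamma(h+1)=1/h$, the scalar already matches the one in \eqref{eq:5F4twoterm} up to the factor $\Gamma(s)/[\Gamma(s+a)\Gamma(s+b)]$ that will be extracted from the $\Gamma$-quotient under the sum; so the whole task reduces to evaluating $g_n^{4}\big((c,f+1,h+1);(d,e,f,h)\big)$ in closed form.

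Here the two unit shifts do the work. I would use the ${}_3F_2$-representation \eqref{eq:nor}, after relabelling the arguments — legitimate by the $\x$- and $\y$-permutation symmetries of $g_n^p$ recorded in Theorem~\ref{th:NorlundSymmetries} — as $\x=(f+1,c,h+1)$, $\y=(f,h,d,e)$. With this ordering $\psi_2=h-1$ and $\psi_4=s$, the prefactor of \eqref{eq:nor} becomes $(e-c-2)_n(d-c-2)_n/n!$, while the inner ${}_3F_2$ acquires a $-1$ among its upper parameters and a $-2$ among its lower ones, so it terminates after two terms and equals the linear polynomial $1-k(h-f-1)/(2(h-c-1))=(\eta+1)_k/(\eta)_k$ with $\eta=-2(h-c-1)/(h-f-1)$ — precisely the $\eta$ of the statement. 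Folding this back turns the $k$-sum into ${}_4F_3\bigl(-n,-2,h-c-1,\eta+1;e-c-2,d-c-2,\eta\bigr)=P_2(n)$, so that
\[
g_n^{4}\big((c,f+1,h+1);(d,e,f,h)\big)=\frac{(e-c-2)_n(d-c-2)_n}{n!}\,P_2(n).
\]
Since the factor $(-2)_k$ annihilates every term with $k\ge3$, $P_2$ has degree two, and $P_2(0)=1$; therefore $P_2(n)=(1-\gamma_1)_n(1-\gamma_2)_n/\bigl[(-\gamma_1)_n(-\gamma_2)_n\bigr]$, where $\gamma_1,\gamma_2$ are the zeros of $P_2$. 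This last normalisation is \eqref{eq:factor2} with $m=2$, in the degenerate case with no $(1-\alpha-n)$-type denominator, and is in any event merely the factorisation of a quadratic taking the value $1$ at the origin.

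Substituting this value of $g_n^4$ back into the sum of the first paragraph and writing $\Gamma(s+n)/[\Gamma(s+a+n)\Gamma(s+b+n)]=\Gamma(s)(s)_n/[\Gamma(s+a)\Gamma(s+b)(s+a)_n(s+b)_n]$, the series becomes exactly the ${}_5F_4$ on the right of \eqref{eq:5F4twoterm}; its convergence and the removal of the auxiliary restriction $\operatorname{Re}(c),\operatorname{Re}(f+1),\operatorname{Re}(h+1)>0$ (under which \eqref{eq:p+1Fp(1)} was established) are handled by analytic continuation in the parameters, as for \eqref{eq:p+1Fp(1)} itself.

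The only real obstacle is the relabelling in the second paragraph: the entries of $\x$ and $\y$ must be arranged so that the two unit shifts \emph{simultaneously} produce the upper $-1$ (which collapses the inner ${}_3F_2$ to first degree in $k$) and the lower $-2$ together with the clean prefactor $(e-c-2)_n(d-c-2)_n$; under any other pairing \eqref{eq:nor} leaves behind a non-degenerate ${}_3F_2$ that does not simplify. A more pedestrian alternative is to iterate the recurrence \eqref{eq:Norlundcoeff} twice — each unit shift inserts a factor $(-1)_{n-m}$ and truncates the inner sum to two terms — reduce $g_n^4$ to the elementary $g^2$'s of \eqref{eq:Norlund-p2}, and then verify by hand that the resulting four products of Pochhammer symbols add up to $\tfrac{(e-c-2)_n(d-c-2)_n}{n!}P_2(n)$; this avoids \eqref{eq:nor} at the price of heavier algebra.
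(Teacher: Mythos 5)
Your argument is correct and follows essentially the same route as the paper's proof: apply \eqref{eq:p+1Fp(1)} to the left-hand side, evaluate $g_n^{4}\big((c,f+1,h+1);(d,e,f,h)\big)$ via \eqref{eq:nor} so that the two unit shifts collapse the inner ${}_3F_2$ to the linear factor $(\eta+1)_k/(\eta)_k$, recognize the resulting $k$-sum as the quadratic $P_2(n)$, and factor it into the Pochhammer ratio. The only blemishes are notational slips that do not propagate: $\nu(\a;\b)=d+e-a-b-c-2=s$ (not $d+e-c-2$), and correspondingly $\psi_4=d+e-c-2=s+a+b$ rather than $s$, though only the correct quantities $\psi_4-y_3=e-c-2$ and $\psi_4-y_4=d-c-2$ actually enter your computation.
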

\begin{proof}
Formula \eqref{eq:p+1Fp(1)} applied to the left hand side of \eqref{eq:5F4twoterm} has the form
\begin{gather}\label{eq:p+1Fpdd1}
{}_{5}F_4\!\left(\begin{matrix}a,b,f+1,h+1,c\\f,h,d,e\end{matrix}\right) =\frac{\Gamma(d)\Gamma(e)}{\Gamma(c)fh}\sum\limits_{n=0}^{\infty}\frac{\Gamma(s+n)g_n^{4}(\{f+1,h+1,c\};\{f,h,d,e\})}{\Gamma(s+a+n)\Gamma(s+b+n)},
\end{gather}
where $s=d+e-a-b-c-2$. According to \eqref{eq:nor} we have
\begin{gather*}
g_n^{4}(\{f+1,h+1,c\};\{f,h,d,e\})
\\ \qquad
{}=\frac{(e-c-2)_n(d-c-2)_n}{n!}
\sum\limits_{k=0}^{n}\frac{(-n)_k(-2)_k(h-c-1)_k}{k!(e-c-2)_k(d-c-2)_k}
\bigg(1+\frac{k(h-f-1)}{(-2)(h-c-1)}\bigg)
\\ \qquad
{}=\frac{(e-c-2)_n(d-c-2)_n}{n!}
\sum\limits_{k=0}^{n}\frac{(-n)_k(-2)_k(h-c-1)_k}{k!(e-c-2)_k(d-c-2)_k}\frac{(\eta+1)_k}{(\eta)_k},
\end{gather*}
where $\eta=-2(h-c-1)/(h-f-1)$. Hence,
\begin{gather*}
g_n^{4}(\{f\!+1,h\!+1,c\};\{f,h,d,e\})=\frac{(e\!-c\!-2)_n(d\!-c\!-2)_n}{n!}\,{}_{4}F_{3}
\!\left( \begin{matrix}-n,-2,h-c-1,\eta+1\\e-c-2,d-c-2,\eta\end{matrix} \right)\!.
\end{gather*}
Noting that the quadratic polynomial
\begin{gather*}
P_2(x)={}_{4}F_3\left.\! \left(\begin{matrix}-2, -x,h-c-1,\eta+1\\e-c-2,d-c-2,\eta\end{matrix}\right.\right)
\end{gather*}
satisfies $P_2(0)=1$, we conclude that $P_2(x)=(x-\gamma_1)(x-\gamma_2)/(\gamma_1\gamma_2)$, where $\gamma_1$, $\gamma_2$ are the roots of $P_2(x)$. Hence, $P_2(n)=(1-\gamma_1)_n(1-\gamma_2)_n/((-\gamma_1)_n(-\gamma_2)_n)$ and
\begin{gather*}
g_n^{4}(\{f+1,h+1,c\};\{f,h,d,e\})=\frac{(e-c-2)_n(d-c-2)_n(1-\gamma_1)_n(1-\gamma_2)_n}
{(-\gamma_1)_n(-\gamma_2)_{n}n!}.
\end{gather*}
Substituting the above formula into \eqref{eq:p+1Fpdd1}, we arrive at \eqref{eq:5F4twoterm}.
\end{proof}

Another transformation of a similar flavour as \eqref{eq:5F4twoterm} has been recently found by us in \cite[formula (59)]{KPGFmethod}, namely,
\begin{gather}
{}_{5}F_{4}\left.\! \left( \begin{matrix}
a,b,c,f+1,h+1\\d,e,f,h\end{matrix}\right.\right)=
\frac{((e-c-1)h+(d-a-b-1)(h-c))\Gamma(e)\Gamma(s)}{h\Gamma(s+c+1)\Gamma(e-c)}\nonumber
\\ \hphantom{{}_{5}F_{4}\left.\! \left( \begin{matrix}
a,b,c,f+1,h+1\\d,e,f,h\end{matrix}\right.\right)=}
{}\times{}_{5}F_{4}\left.\! \left( \begin{matrix}
d-a-1,d-b-1,c,\xi+1,\zeta+1\\d,e+d-a-b-1,\xi,\zeta\end{matrix}\right.\right)\!,
\label{eq:5F4-2unitshifts}
\end{gather}
where $s=d+e-a-b-c-2$, $d-a-1\ne0$, $d-b-1\ne0$ and
\begin{gather*}
\xi=h+\frac{(d-a-b-1)(h-c)}{e-c-1},\qquad \zeta=\frac{(d-a-1)(d-b-1)f}{(d-a-b-1)f+ab}.
\end{gather*}
Transformations \eqref{eq:5F4twoterm} and \eqref{eq:5F4-2unitshifts} can be iterated and composed with each other. Together with the obvious invariance with respect to permutations of the upper and the lower parameters they generate a group, which can be shown to be isomorphic to the direct product of two-term Thomae transformations for ${}_{3}F_{2}$ with contiguous relations for ${}_{3}F_{2}$. This claim can be verified using the approach from \cite{KP2020}. We further believe that similar transformations hold for higher order ${}_{p+1}F_{p}$ with appropriate number of unit shifts in parameters. We envision further investigation of this topic in a future publication.

\section{Multi-term transformations of the non-terminating series}\label{section5}

N{\o}rlund proved the following identity \cite[relation~(5.8)]{Norlund}:
\begin{gather}\label{eq:Norlund5.8}
\frac{\Gamma\big(\a_{[1]}\big)}{\Gamma(1-a_{1})\Gamma(\b)}\, {}_{p+1}F_p\!
\left(\begin{matrix}\a\\\b\end{matrix}\right)
=\sum\limits_{k=2}^{p+1}\frac{\Gamma(a_k)\Gamma\big(\a_{[1,k]}-a_{k}\big)}{\Gamma(1-a_{1}+a_{k})\Gamma(\b-a_{k})}\,
{}_{p+1}F_p\!\left(\begin{matrix}a_{k},1-\b+a_{k}\\1-\a_{[k]}+a_{k}\end{matrix}\right)\!,
\end{gather}
where all series involved converge if $\operatorname{Re}(\nu(\a;\b))>0$. This identity was later rediscovered by Wimp in \cite[Lemma~2]{Wimp87}. We note that it also follows immediately on substituting \eqref{eq:Gppexpansion1} into~\eqref{eq:Frepr} and integrating term-wise using the beta integral:
\begin{gather*}
{}_{p+1}F_p\left.\! \left(\begin{matrix}a_1,\a\\\b\end{matrix}\right.\right)
=\frac{\Gamma(\b)}{\Gamma\big(\a_{[1]}\big)}\sum\limits_{k=2}^{p+1}
\frac{\Gamma\big(a_{[1,k]}-a_k\big)}{\Gamma(\b-a_k)}\sum\limits_{n=0}^{\infty}
\frac{(1-\b+a_k)_n}{\big(1-\a_{[1,k]}+a_k\big)_n n!}\int_0^{1}\frac{t^{n+a_k-1}}{(1-t)^{a_1}}{\rm d}t
\\ \hphantom{{}_{p+1}F_p\left. \!\left(\begin{matrix}a_1,\a\\\b\end{matrix}\right.\right)}
{}=\frac{\Gamma(\b)\Gamma(1-a_1)}{\Gamma\big(\a_{[1]}\big)}\sum\limits_{k=2}^{p+1}
\frac{\Gamma(a_k)\Gamma\big(\a_{[1,k]}-a_{k}\big)}{\Gamma(1-a_{1}+a_{k})\Gamma(\b-a_{k})}\,
{}_{p+1}F_p\!\left(\begin{matrix}a_{k},1-\b+a_{k}\\1-\a_{[k]}+a_{k}\end{matrix}\right)\!.
\end{gather*}
On the other hand, formulas \eqref{eq:Gppexpansion1} and \eqref{eq:Norlund} imply immediately that for $\sum_{j=1}^{p+1}(c_j-b_j)>1$ we have
\begin{equation}\label{eq:Gppexpansion4}
\sum\limits_{k=1}^{p+1}\frac{\Gamma\big(\b_{[k]}-b_{k}\big)}{\Gamma(\cb-b_{k})}\, {}_{p+1}F_{p}\!
\left( \begin{matrix}1-\cb+b_k\\1-\b_{[k]}+b_k \end{matrix}\right)=0.
\end{equation}
We note that particular cases of identities \eqref{eq:Norlund5.8} and \eqref{eq:Gppexpansion4} appeared many times in the literature proved by various methods. For instance, the three-term Thomae relation for ${}_3F_{2}$ \cite[formula~3.2(2)]{Bailey} is a particular case of \eqref{eq:Norlund5.8}, while its variation \cite[formula~(10)]{Darling} and ${}_4F_{3}$ generalization \cite[formula~(19)]{Darling} reduce to~\eqref{eq:Gppexpansion4} after appropriate change of notation.

We note also that \eqref{eq:Gppexpansion4} is not a straightforward rewriting of \eqref{eq:Norlund5.8}. Indeed, setting $b_1=0$ brings~\eqref{eq:Gppexpansion4} to the form:
\begin{gather*}
\frac{\Gamma\big(\b_{[1]}\big)}{\Gamma(\cb)}\,{}_{p+1}F_{p}\!
\left( \begin{matrix}1-\cb\\1-\b_{[1]} \end{matrix}\right)
=\sum\limits_{k=2}^{p+1}\frac{\Gamma(1-b_{k})\Gamma\big(\b_{[1,k]}-b_{k}\big)}{b_k\Gamma(\cb-b_k)}\, {}_{p+1}F_{p}\!\left( \begin{matrix}1-\cb+b_k\\1+b_k,1-\b_{[1,k]}+b_k \end{matrix}\right)\!.
\end{gather*}
Changing notation according to $1-\cb\to\a,$ $1-\b_{[1]}\to\b$ we get
\begin{gather}\label{eq:Gppexpansion8}
\frac{\Gamma(1-\b)}{\Gamma(1-\a)}{}_{p+1}F_{p}\!\left( \begin{matrix}\a\\\b\end{matrix} \right)=-\sum\limits_{k=1}^{p}\frac{\Gamma(b_k-1)\Gamma\big(b_k-\b_{[k]}\big)}{\Gamma(b_k-\a)}\, {}_{p+1}F_{p}\!\left( \begin{matrix}\a+1-b_k\\2-b_k,1+\b_{[k]}-b_k\end{matrix} \right)\!,
\end{gather}
where all series involved converge if $\operatorname{Re}(\nu(\a;\b))>0$. This is manifestly different from~\eqref{eq:Norlund5.8}. It remains unclear for us, however, if~\eqref{eq:Norlund5.8} and~\eqref{eq:Gppexpansion8} can be obtained from each other by the appropriate compositions.

Our next theorem gives another multi-term identity for ${}_{p+1}F_{p}$ containing $p$ or more terms and not immediately found in the literature (other than for $p=2$, see remark below).
\begin{Theorem}\label{thm:Gsum1}
	Suppose $0\le{n}\le{p}$, $0\le{m}\le{p}$ are integers that satisfy $m+n\ge{p}$ and $\a\in\C^{n}$, $\b\in\C^{m}$, $\cb\in\C^{p-n}$, $\d\in\C^{p-m}$ are complex vectors that satisfy
\begin{gather*} \operatorname{Re}\Biggl(\sum_{j=1}^{n}a_j+\sum_{j=1}^{p-n}c_j-\sum_{j=1}^{m}b_j-\sum_{j=1}^{p-m}d_j\Biggr)>0.
\end{gather*}
Then
\begin{subequations}
\begin{gather}
\sum\limits_{k=1}^{m}\frac{A_{k}}{b_{k}}\, {}_{p+1}F_{p}\!\left( \left.
\begin{matrix}1-\a+b_{k},1-\cb+b_{k},b_{k}\\1-\b_{[k]}+b_{k},1-\d+b_{k},b_{k}+1\end{matrix} \right|
(-1)^{p-m-n}\!\right)\nonumber
\\ \qquad
{}+\sum\limits_{k=1}^{n}\frac{B_{k}}{(1-a_{k})}\, {}_{p+1}F_{p}\!\left( \left. \begin{matrix}1+\b-a_{k},1+\d-a_{k},1-a_{k}\\1+\a_{[k]}-a_{k},1+\cb-a_{k},2-a_{k}\end{matrix} \right|
(-1)^{p-m-n}\!\right)\nonumber
\\ \qquad
{}=\frac{\Gamma(1-\a)\Gamma(\b)}{\Gamma(\cb)\Gamma(1-\d)},
\label{eq:Newsummation}
\end{gather}
where
\begin{gather}\label{eq:constants}
A_{k}=\frac{\Gamma\big(\b_{[k]}-b_{k}\big)\Gamma(1-\a+b_k)}{\Gamma(\cb-b_{k})\Gamma(1-\d+b_{k})},\qquad
B_k=\frac{\Gamma(a_k-\a_{[k]})\Gamma(1+\b-a_{k})}{\Gamma(a_{k}-\d)\Gamma(1+\cb-a_{k})}.
\end{gather}
\end{subequations}
\end{Theorem}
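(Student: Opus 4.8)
The strategy is to express the Mellin transform of $G^{m,n}_{p,p}$ (with both parameter lists of length $p$) at the point $s=0$ in two ways. By the very definition \eqref{eq:G-defined} of $G^{m,n}_{p,p}$ as an inverse Mellin transform, Mellin inversion gives $\int_0^{\infty}t^{s-1}G^{m,n}_{p,p}(t)\,{\rm d}t=\Gamma(1-\a-s)\Gamma(\b+s)/[\Gamma(\cb+s)\Gamma(1-\d-s)]$, whose value at $s=0$ is the right-hand side of \eqref{eq:Newsummation}; this is the general-$(m,n)$ analogue of \eqref{eq:MellinGp0pp}, where the choice $m=p$, $n=0$ localised the Mellin integral to $(0,1)$. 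The plan is to recompute the same integral at $s=0$, i.e.\ $\int_0^{\infty}G^{m,n}_{p,p}(t)\,{\rm d}t/t$, by splitting it as $\int_0^1+\int_1^\infty$ and inserting into each piece one of the two convergent series expansions of $G^{m,n}_{p,p}$ furnished by the general residue expansion of Meijer's $G$ function \cite[Section~4.6.2]{Slater} --- the same computation that yields \eqref{eq:Gppexpansion1} in the case $m=p$, $n=0$. For $G^{m,n}_{p,p}$ with upper parameters $\a,\cb$ and lower parameters $\b,\d$ these are the ascending expansion, valid for $0<t<1$,
\begin{gather*}
G^{m,n}_{p,p} \left(t~\vline\begin{matrix}\a,\cb\\\b,\d\end{matrix} \right)
=\sum_{k=1}^{m}A_k\,t^{b_k}\,{}_{p}F_{p-1} \left( \begin{matrix}1-\a+b_k,1-\cb+b_k\\1-\b_{[k]}+b_k,1-\d+b_k\end{matrix} \vline\,(-1)^{p-m-n}t \right),
\end{gather*}
whose residue coefficients are exactly the $A_k$ of \eqref{eq:constants}, and --- via the standard reflection formula for the $G$ function, which expresses the left-hand side as the $G^{n,m}_{p,p}$ with argument $1/t$, upper parameters $1-\b,1-\d$ and lower parameters $1-\a,1-\cb$, to which the ascending expansion applies once more --- the descending expansion, valid for $t>1$,
\begin{gather*}
G^{m,n}_{p,p} \left(t~\vline\begin{matrix}\a,\cb\\\b,\d\end{matrix} \right)
=\sum_{k=1}^{n}B_k\,t^{a_k-1}\,{}_{p}F_{p-1} \left( \begin{matrix}1+\b-a_k,1+\d-a_k\\1+\a_{[k]}-a_k,1+\cb-a_k\end{matrix} \vline\,(-1)^{p-m-n}/t \right),
\end{gather*}
whose coefficients are exactly the $B_k$ of \eqref{eq:constants}.

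With the two expansions in hand I would integrate term by term. Over $(0,1)$, using $\int_0^1t^{b_k+\ell-1}{\rm d}t=1/(b_k+\ell)$, the $k$-th inner series of the ascending expansion becomes $A_k/b_k$ times the ${}_{p+1}F_p$ obtained by adjoining $b_k$ to the numerator and $b_k+1$ to the denominator --- precisely the first sum in \eqref{eq:Newsummation}. Over $(1,\infty)$, the substitution $t\mapsto1/t$ turns $\int_1^\infty t^{a_k-1}(\cdots){\rm d}t/t$ into $\int_0^1u^{-a_k}(\cdots){\rm d}u$, and $\int_0^1u^{\ell-a_k}{\rm d}u=1/(1-a_k+\ell)$ turns the $k$-th inner series of the descending expansion into $B_k/(1-a_k)$ times the ${}_{p+1}F_p$ with the adjoined pair $1-a_k$, $2-a_k$ --- precisely the second sum in \eqref{eq:Newsummation}. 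Adding the two pieces recovers $\int_0^{\infty}G^{m,n}_{p,p}(t)\,{\rm d}t/t$, and comparison with the value computed from \eqref{eq:G-defined} completes the identity.

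Finally, all the convergence bookkeeping would be assembled at the end. The term-by-term integrations and the identification of $\int_0^\infty$ with the Mellin transform at $s=0$ require $s=0$ to lie in the fundamental strip of $G^{m,n}_{p,p}$, which forces the auxiliary restrictions $\operatorname{Re}(b_k)>0$ for $k\le m$ and $\operatorname{Re}(a_k)<1$ for $k\le n$ (controlling the behaviour as $t\to0+$ and $t\to+\infty$) together with integrability of $G^{m,n}_{p,p}$ at $t=1$. Since both sides of \eqref{eq:Newsummation} are meromorphic in the vectors $\a,\b,\cb,\d$, it is enough to establish the identity on the open parameter set cut out by these extra inequalities and then invoke analytic continuation; the stated hypothesis $\operatorname{Re}\big(\sum a_j+\sum c_j-\sum b_j-\sum d_j\big)>0$ is exactly what remains indispensable, being at once the condition under which every ${}_{p+1}F_p$ in \eqref{eq:Newsummation} converges (its parametric excess equals this quantity) and, when $p-m-n$ is even, the condition making $t=1$ an integrable singularity of $G^{m,n}_{p,p}$ --- near $t=1$ the ascending series behaves like $(1-t)$ raised to the power $\operatorname{Re}\big(\sum a_j+\sum c_j-\sum b_j-\sum d_j\big)-1$ --- while for $p-m-n$ odd the relevant branch point sits at $t=-1$ and $t=1$ is a regular point. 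I expect the main obstacle to be precisely this reconciliation of the two residue series at the junction $t=1$ (which is where the hypothesis $m+n\ge p$ enters, guaranteeing that both series reach $t=1$) together with the Fubini justification for exchanging summations and integrations; checking that the $G$-function residue coefficients collapse to the $A_k$ and $B_k$ of \eqref{eq:constants} is a routine but necessary algebraic computation, conveniently cross-validated against \eqref{eq:Gppexpansion1}.
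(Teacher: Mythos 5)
Your proposal is correct and follows essentially the same route as the paper: both evaluate the Mellin transform of $G^{m,n}_{p,p}$ at $s=0$ directly from the definition and then again by splitting the integral at $t=1$, inserting the ascending and descending expansions of the $G$ function (the paper quotes them from Prudnikov--Brychkov--Marichev as its formula \eqref{eq:Gppexpansion}, with precisely your $A_k$ and $B_k$), integrating term by term to produce the adjoined parameter pairs $(b_k,b_k+1)$ and $(1-a_k,2-a_k)$, and finally removing the auxiliary restrictions $-\min\operatorname{Re}(\b)<0<\min(1-\operatorname{Re}(\a))$ by analytic continuation.
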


\begin{proof}
Combining \cite[formula~(8.2.2.3)]{PBM3} with \cite[fornula~(8.2.2.4)]{PBM3} for $x>0$ we get:
\begin{gather}
G^{m,n}_{p,p}\!\left( x\left|\begin{matrix} \a,\cb \\ \b,\d \end{matrix}\right. \right)
=H(1-x)\sum\limits_{k=1}^{m}A_kx^{b_k}\,{}_{p}F_{p-1}\!
\left( \begin{matrix}1-\a+b_k,1-\cb+b_k\\1-\b_{[k]}+b_k,1-\d+b_k\end{matrix}
\!\vline\,(-1)^{p-m-n}x\!\right)\nonumber
\\ \hphantom{G^{m,n}_{p,p}\!\left( x\left|\begin{matrix} \a,\cb \\ \b,\d \end{matrix}\right. \right)
}
{}+H(x\!-1)\sum\limits_{k=1}^{n}B_kx^{a_k-1}\,{}_{p}F_{p-1}\!
\left( \begin{matrix}1+\b-a_k,1+\d-a_k\\1 +\a_{[k]} -a_k,1 +\cb -a_k\end{matrix} \vline\,
\frac{(-1)^{p-m-n}}{x}\!\right)\!,\!\!\!
\label{eq:Gppexpansion}
\end{gather}
where $A_k$, $B_k$ are defined in \eqref{eq:constants} and $H(t)=\partial_t\max\{t,0\}$ is the Heaviside function.
According to \cite[Theorem~2.2]{KilSaig} the Mellin transform
\begin{gather*}
\int_{0}^{\infty}x^{s-1}G^{m,n}_{p,p}\left(x\left|\begin{matrix} \a,\cb \\ \b,\d
\end{matrix}\right.\right){\rm d}x
=\frac{\Gamma(1-\a-s)\Gamma(\b+s)}{\Gamma(\cb+s)\Gamma(1-\d-s)}
\end{gather*}
exists for $-\min(\operatorname{Re}(\b))<\operatorname{Re}(s)<\min(1-\operatorname{Re}(\a))$ under conditions of the theorem. Assuming that $s=0$ belongs to this range we can take the Mellin transform with $s=0$ on both sides of~\eqref{eq:Gppexpansion} to obtain
\begin{align*}
\frac{\Gamma(1-\a)\Gamma(\b)}{\Gamma(\cb)\Gamma(1-\d)}
={}&\sum\limits_{k=1}^{m}A_k\sum_{j=0}^{\infty}\frac{(1-\a+b_k)_{j}(1-\cb+b_k)_{j}[(-1)^{p-m-n}]^{j}}
{(1-\b_{[k]}+b_k)_{j}(1-\d+b_k)_{j}j!}\int_{0}^{1}x^{b_k+j-1}{\rm d}x
\\
&+\sum\limits_{k=1}^{n}B_k\sum_{j=0}^{\infty}\frac{(1+\b-a_k)_{j}(1+\d-a_k)_{j}
[(-1)^{p-m-n}]^{j}}{(1+\a_{[k]}-a_k)_{j}(1+\cb-a_k)_{j}j!}\int_{1}^{\infty}x^{a_k-j-2}{\rm d}x
\\
={}&\sum\limits_{k=1}^{m}\frac{A_k}{b_k}\sum_{j=0}^{\infty}\frac{(1-\a+b_k)_{j}(1-\cb+b_k)_{j}[(-1)^{p-m-n}]^{j}(b_k)_{j}}{(1-\b_{[k]}+b_k)_{j}(1-\d+b_k)_{j}j!(b_k+1)_{j}}
\\
&+\sum\limits_{k=1}^{n}\frac{B_k}{1-a_k}\sum_{j=0}^{\infty}\frac{(1+\b-a_k)_{j}(1+\d-a_k)_{j}[(-1)^{p-m-n}]^{j}(1-a_k)_{j}}{(1+\a_{[k]}-a_k)_{j}(1+\cb-a_k)_{j}(2-a_k)_{j}j!},
\end{align*}
which proves \eqref{eq:Newsummation} under the restriction $-\min(\operatorname{Re}(\b))<0<\min(1-\operatorname{Re}(\a))$. This restriction can now be removed by analytic continuation.
\end{proof}

\begin{Remark} An essential part of the above calculation was made by Slater in \cite[formulas~(4.8.1.14)--(4.8.1.23)]{Slater}. For unknown reasons she decided not to make final step leading to~\eqref{eq:Newsummation}. The case $p=2$ of formula \eqref{eq:Newsummation} is known, see \cite[formula~(7.4.4.11)]{PBM3}.
\end{Remark}

\section{Multi-term transformations of the terminating series}\label{section6}

Introduce the following notation: $\m=(m_1,\dots,m_r)\in\Z^r$ and $\n=(n_1,\dots,n_r)\in\Z^r$,
\begin{gather*}
M=m_1+\cdots+{m_r},\qquad N=n_1+\cdots+{n_r},
\\
m_{\min}=\min\limits_{1\le{i}\le{r}}(m_i),\qquad
n_{\max}=\max\limits_{1\le{i}\le{r}}(n_i),\qquad
p=\max\{-1,M-N-r+1\}.
\end{gather*}
In \cite[Theorem~1]{KarpKuzn} A.\:Kuznetsov and the second author established the following identity for arbitrary $\a,\b\in\C^{r}$ such that the components of $\a$ are distinct modulo integers, and $|z|<1$:
\begin{gather}
\sum\limits_{i=1}^{r}\frac{(1-\b+a_i)_{\m-n_i}z^{-n_i}}{\big(a_i-\a_{[i]}\big)_{\n_{[i]}-n_i+1}}\,
{}_{r}F_{r-1}\!\left(\begin{matrix}\b-a_i\\1+\a_{[i]}-a_i\end{matrix}\Big\vert z\right)
{}_{r}F_{r-1}\!\left(\begin{matrix}1-\b+a_i+\m-n_i\\1-\a_{[i]}+a_i+\n_{[i]}-n_i\end{matrix}\Big\vert z\right)\nonumber
\\ \qquad
{}=(1-z)^{-p-1}\sum\limits_{j=-n_{\max}}^{p-m_{\min}}\beta_jz^{j}.
\label{eq:duality}
\end{gather}
The authors did not give any explicit expression for the numbers $\beta_j$. Our aim here is to emp\-loy the above formula for deriving some identities for the generalized hypergeometric function evaluated at $1$. To this end we will need the explicit expressions for $\beta_j$, which we present in the following lemma.

\begin{Lemma}\label{lm:beta-final}
Formula \eqref{eq:duality} holds true for
\begin{gather}
\beta_{k}=
\sum\limits_{j=\max(-n_{\max},k-p-1)}^{k}\binom{p+1}{k-j}(-1)^{k-j}\label{eq:beta-final}
\\ \hphantom{\beta_{k}=}
{}\times\sum\limits_{i=1}^{r}\frac{(1-\b+a_i)_{\m+j}}{\big(a_i-\a_{[i]}\big)_{\n_{[i]}+j+1}(j+n_i)!}\, {}_{2r}F_{2r-1}
\!\left( \begin{matrix}-j\!-n_i,\b -a_i,1 \a_{[i]} -a_{i} -\n_{[i]} -j-1
\\ \b-a_{i}-\m-j,1+\a_{[i]}-a_{i}\end{matrix} \right)\!,
\nonumber
\end{gather}
where each term with $j+n_i<0$ is assumed to equal zero.
\end{Lemma}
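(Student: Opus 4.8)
The plan is to read off the $\beta_k$ from \eqref{eq:duality} by clearing the denominator on the right. Multiplying \eqref{eq:duality} by $(1-z)^{p+1}$ turns its right-hand side into the genuine Laurent polynomial $\sum_{j}\beta_jz^{j}$, so that $\beta_k$ is exactly the coefficient of $z^{k}$ in $(1-z)^{p+1}$ times the left-hand side of \eqref{eq:duality}. Since $(1-z)^{p+1}=\sum_{l=0}^{p+1}\binom{p+1}{l}(-1)^{l}z^{l}$ is a polynomial, this coefficient is a finite expression, computable summand by summand in $i$. All manipulations are carried out under the genericity hypotheses of \cite{KarpKuzn} (the components of $\a$ distinct modulo integers), which ensure that every hypergeometric series involved is well defined.

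First I would replace, in the $i$-th summand on the left of \eqref{eq:duality}, the product of the two ${}_{r}F_{r-1}$ series by its Cauchy product $\sum_{w\ge0}c_{w}^{(i)}z^{w}$, where
\[
c_{w}^{(i)}=\sum_{u=0}^{w}\frac{(\b-a_i)_{u}}{(1+\a_{[i]}-a_i)_{u}\,u!}\cdot\frac{(1-\b+a_i+\m-n_i)_{w-u}}{(1-\a_{[i]}+a_i+\n_{[i]}-n_i)_{w-u}\,(w-u)!}.
\]
Carrying the scalar prefactor $(1-\b+a_i)_{\m-n_i}/(a_i-\a_{[i]})_{\n_{[i]}-n_i+1}$ and the shift $z^{-n_i}$ along, the $i$-th summand contributes to the coefficient of $z^{k}$ in $(1-z)^{p+1}$ times the left-hand side the quantity (prefactor) $\cdot\sum_{l=0}^{p+1}\binom{p+1}{l}(-1)^{l}c^{(i)}_{k+n_i-l}$, with the convention $c^{(i)}_{w}=0$ for $w<0$. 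Re-indexing by $j=k-l$, so that the Pochhammer index inside $c^{(i)}$ becomes $w=k+n_i-l=j+n_i$, reproduces the outer double sum of \eqref{eq:beta-final}, the innermost object being the external prefactor times $c^{(i)}_{j+n_i}$; the terms with $j+n_i<0$ drop out automatically (equivalently $1/(j+n_i)!=0$), which is why the stated sum may start at $j=\max(-n_{\max},k-p-1)$.

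It then remains to check, for $w=j+n_i\ge0$, that the external prefactor times $c^{(i)}_{w}$ equals the product of the prefactor displayed in \eqref{eq:beta-final} with the terminating ${}_{2r}F_{2r-1}$. I would first factor the $u=0$ term out of $c^{(i)}_{w}$. Using the elementary identity $(\alpha)_{a}(\alpha+a)_{b}=(\alpha)_{a+b}$ (valid for any integer $a$ and any $b\ge0$, with the standard convention $(\alpha)_{-r}=(-1)^{r}/(1-\alpha)_{r}$), together with $w=j+n_i$, one obtains
\[
\frac{(1-\b+a_i)_{\m-n_i}}{\bigl(a_i-\a_{[i]}\bigr)_{\n_{[i]}-n_i+1}}\cdot\frac{(1-\b+a_i+\m-n_i)_{w}}{(1-\a_{[i]}+a_i+\n_{[i]}-n_i)_{w}\,w!}=\frac{(1-\b+a_i)_{\m+j}}{\bigl(a_i-\a_{[i]}\bigr)_{\n_{[i]}+j+1}\,(j+n_i)!},
\]
which is exactly the prefactor in \eqref{eq:beta-final}. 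For the remaining factor $c^{(i)}_{w}/(\text{its }u=0\text{ term})=\sum_{u}(\text{$u$-th term})/(\text{$0$-th term})$ I would rewrite every descending symbol $(\cdot)_{w-u}$ and the factorial $(w-u)!$ in ascending form via the relations \eqref{eq:pochammer}: the factorial produces the upper parameter $-w=-j-n_i$, the numerator $\m$-block produces the lower parameters $\b-a_i-\m-j$, the denominator $\n$-block produces the upper parameters $1+\a_{[i]}-a_i-\n_{[i]}-j-1$, while the leading factor already carries $\b-a_i$ over $1+\a_{[i]}-a_i$ and the $1/u!$; the signs accumulating in this process multiply to $(-1)^{2ru}=1$. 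Hence that sum equals the ${}_{2r}F_{2r-1}$ at unit argument appearing in \eqref{eq:beta-final}, and combining the two displays yields \eqref{eq:beta-final}.

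The only genuine difficulty is bookkeeping. One must handle Pochhammer symbols with possibly negative subscripts consistently — they already occur on the left-hand side of \eqref{eq:duality} — and must track the two reindexings ($w=j+n_i$ inside the Cauchy product and $j=k-l$ from the binomial expansion) without sign errors; the cancellation of the $(-1)^{2ru}$ factor is the small miracle that makes the ${}_{2r}F_{2r-1}$ come out clean. There is no analytic subtlety: \eqref{eq:duality} itself guarantees that $(1-z)^{p+1}$ times its left-hand side is a Laurent polynomial, so comparing coefficients of $z^{k}$ is legitimate, and the rest is manipulation of terminating hypergeometric series.
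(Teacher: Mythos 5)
Your proposal is correct and follows essentially the same route as the paper: expand the two ${}_{r}F_{r-1}$ factors as a Cauchy product, convolve with the binomial expansion of $(1-z)^{p+1}$, and convert the resulting inner sum into the terminating ${}_{2r}F_{2r-1}$ via the Pochhammer inversion identities \eqref{eq:pochammer}. The only cosmetic differences are the order of operations (the paper first packages the Cauchy product into the coefficients $\alpha_k$ of \eqref{eq:alpha-defined}, quoting the explicit product formula from \cite{KarpKuzn}, and only then performs the binomial convolution) and your self-contained derivation of that product formula; the sign cancellation $(-1)^{ru}(-1)^{(r-1)u}(-1)^{u}=1$ and the prefactor merging $(\alpha)_{a}(\alpha+a)_{b}=(\alpha)_{a+b}$ check out.
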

\begin{proof}
Writing $S(z)$ for the left hand side of \eqref{eq:duality} we get by collecting terms
\begin{gather*}
S(z)=\sum\limits_{k=-n_{\max}}^{\infty}z^k
\underbrace{\sum\limits_{i=1}^{r}\sum\limits_{j=0}^{k+n_i}\gamma^{k+n_i}_{i,j}}_{=\alpha_k}=\sum\limits_{k=-n_{\max}}^{\infty}\alpha_{k}z^k,
\end{gather*}
where, according to the formula on page 4 of \cite{KarpKuzn}, we have the first equality below:
\begin{align*}
\gamma^{k+n_i}_{i,j}&=\frac{(-1)^{j}(1-\b+a_i-j)_{\m+k}}{\big(a_i-\a_{[i]}-j\big)_{\n_{[i]}+k+1}j!(k+n_i-j)!}
\\
&=\frac{(1-\b+a_i)_{\m+k}(\b-a_i)_{j}\big(1+\a_{[i]}-a_i-\n_{[i]}-k-1\big)_{j}(-k-n_i)_{j}}
{(\b-a_i-\m-k)_{j}\big(a_i-\a_{[i]}\big)_{\n_{[i]}+k+1}\big(1+\a_{[i]}-a_i\big)_{j}(k+n_i)!j!}.
\end{align*}
The second equality above is obtained by an application of the easily verifiable identities
\begin{gather*}
(z-j)_{n}=\frac{(z)_n(1-z)_{j}}{(1-z-n)_{j}}\qquad \text{and}\qquad (m-j)!=(-1)^j\frac{m!}{(-m)_{j}}.
\end{gather*}
Hence,
\begin{gather}\label{eq:alpha-defined}
\alpha_k=\!
\sum\limits_{i=1}^{r}\frac{(1-\b+a_i)_{\m+k}}{\big(a_i\!-\a_{[i]}\big)_{\n_{[i]}+k+1}(k\!+n_i)!}\,
{}_{2r}F_{2r-1}\!\left( \begin{matrix}-k\!-n_i,\b\!-a_i,1\!+\a_{[i]}\!-a_{i}\!-\n_{[i]}\!-\!k\!-\!1
\\\b-a_{i}-\m-k,1+\a_{[i]}-a_{i}\end{matrix} \right)\!.\!\!\!\!
\end{gather}
Multiplying both sides of \eqref{eq:duality} by $(1-z)^{p+1}$ and expanding by the binomial theorem we obtain
\begin{gather*}
\sum_{j=0}^{p+1}\binom{p+1}{j}(-z)^j\sum\limits_{k=-n_{\max}}^{\infty}\alpha_{k}z^k=\sum\limits_{k=-n_{\max}}^{p-m_{\min}}\beta_{k}z^{k}.
\end{gather*}
Multiplying both sides by $z^{n_{\max}}$, changing $k+n_{\max}\to{k}$ and writing $\hat{\beta}_k=\beta_{k-n_{\max}}$,
$\hat{\alpha}_k=\alpha_{k-n_{\max}}$, we get
\begin{gather*}
\sum_{j=0}^{p+1}\binom{p+1}{j}(-1)^jz^{j}\sum\limits_{k=0}^{\infty}\hat{\alpha}_{k}z^k
=\sum\limits_{s=0}^{\infty}z^{s}\sum\limits_{j+k=s}\binom{p+1}{j}(-1)^{j}\hat{\alpha}_{k}
=\sum\limits_{k=0}^{p-m_{\min}+n_{\max}}\hat{\beta}_{k}z^{k}.
\end{gather*}
In view of $\binom{p+1}{j}=0$ for $j>p+1$, this implies that
\begin{gather*}
\beta_{s-n_{\max}}=\hat{\beta}_s=\sum\limits_{j=0}^{\min(s,p+1)}\binom{p+1}{j}(-1)^{j}\hat{\alpha}_{s-j}
=\sum\limits_{j=0}^{\min(s,p+1)}\binom{p+1}{j}(-1)^{j}\alpha_{s-j-n_{\max}}
\end{gather*}
for $s=0,\dots,p-m_{\min}+n_{\max}$. Returning to $k=s-n_{\max}$ we obtain by changing the index of summation according to the rule $j\to{k-j}$:
\begin{gather*}
\beta_{k}=\sum\limits_{j=0}^{\min(k+n_{\max},p+1)}\binom{p+1}{j}(-1)^{j}\alpha_{k-j}=
\sum\limits_{j=\max(-n_{\max},k-p-1)}^{k}\binom{p+1}{k-j}(-1)^{k-j}\alpha_{j}
\end{gather*}
for $k=-n_{\max},\dots,p-m_{\min}$. Substituting the formula for $\alpha_{j}$ we finally arrive at \eqref{eq:beta-final}.
\end{proof}

\looseness=-1 Next, we derive identities for the numbers $\alpha_k$ defined in \eqref{eq:alpha-defined} and $\beta_k$ from \eqref{eq:beta-final}. To formulate our proposition for $\alpha_k$, we will need the standard Bernoulli polynomials $\Be_n(x)=\Be_{n}^{(1)}(x)$, where~$\Be_{n}^{(\sigma)}(x)$ is defined in \eqref{eq:Bern-Norl-defined}. Note that the explicit expression \cite[formula~(24.2.3)]{NIST} implies that the leading coefficient of $\Be_n(x)$ is $1$. Further, define the polynomial $q_p(k)$ by the recurrence
\begin{subequations}\label{eq:qp-defined}
\begin{gather}
q_0=1,\qquad
q_p(k)=\frac{1}{p}\sum_{j=1}^{p}\frac{(-1)^{j+1}}{j+1}Q_{j}(k)q_{p-j}(k),
\end{gather}
where
\begin{gather}
Q_{j}(k)=\sum\limits_{i=1}^{r}\big[\Be_{j+1}(-a_i-k)-\Be_{j+1}(-b_i+1-k)+\Be_{j+1}(1-b_i+m_i)\nonumber
\\ \hphantom{Q_{j}(k)=\sum\limits_{i=1}^{r}\big[}
{}-\Be_{j+1}(1-a_i+n_i)\big]\!.
\end{gather}
\end{subequations}
In other words, $q_p(k)$ is the coefficient at $z^{-p}$ in the exponential expansion
\begin{gather*}
\exp\left[\sum_{j=1}^{\infty}\frac{(-1)^{j+1}Q_{j}(k)}{(j)_2z^j}\right]=1+\sum_{s=1}^{\infty}\frac{q_{s}(k)}{z^s},
\end{gather*}
where $p=\max\{-1,M-N-r+1\}$. Clearly, $q_{-1}=0$ and $q_0=1$. There are several other ways to compute $q_p(k)$: via the complete (or exponential) Bell polynomials, by the determinantal and the explicit multiple sum formulas, see Section~\ref{section2} and \cite[Section~3.3]{Comtet}. We have the following proposition.

\begin{Theorem}\label{th:alpha}
For all $k\ge-m_{\min}$ we have $\alpha_k=q_p(k)$ , or, in detail,
\begin{gather*}
\sum\limits_{i=1}^{r}\frac{(1-\b+a_i)_{\m+k}}{\big(a_i-\a_{[i]}\big)_{\n_{[i]}+k+1}(k+n_i)!}\, {}_{2r}F_{2r-1}
\!\left( \begin{matrix}-k\!-n_i,\b\!-a_i,1\!+\a_{[i]}\!-a_{i}\!-\n_{[i]}\!-k\!-1
\\\b-a_{i}-\m-k,1+\a_{[i]}-a_{i}\end{matrix} \right) =q_p(k),
\end{gather*}
where $q_p$ is defined in \eqref{eq:qp-defined}. In particular, $q_{-1}(k)=0$, $q_{0}(k)=1$, and, furthermore,
\begin{gather*}
q_1(k)=\frac{Q_1(k)}2,\qquad q_2(k)=\frac{Q_{1}^2(k)}8-\frac{Q_2(k)}6.
\end{gather*}
\end{Theorem}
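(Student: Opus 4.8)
The plan is to prove that \emph{both} $\alpha_k$ and $q_p(k)$ equal the coefficient of $w^{-p}$ in the large-$w$ asymptotic expansion of $w^{1-p}\mathcal{R}_k(w)$, where
\[
\mathcal{R}_k(w):=\prod_{i=1}^{r}\frac{\Gamma(w-a_i-k)\,\Gamma(w+1-b_i+m_i)}{\Gamma(w+1-b_i-k)\,\Gamma(w+1-a_i+n_i)} .
\]
Here $\mathcal{R}_k(w)\sim w^{M-N-r}$ as $w\to\infty$, and $M-N-r=p-1$ whenever $p\ge 0$ (for $p=-1$ the theorem reduces to the trivial identity $\alpha_k=q_{-1}(k)=0$ for $k\ge-m_{\min}$, so assume $p\ge 0$). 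The easy half is the statement for $q_p(k)$: inserting the Stirling expansion $\log\Gamma(w+a)=(w+a-\tfrac12)\log w-w+\tfrac12\log2\pi+\sum_{j\ge1}(-1)^{j+1}\Be_{j+1}(a)/[(j)_2\,w^j]+o(1)$ into each of the $4r$ gamma factors and collecting terms yields $\log\!\bigl(w^{1-p}\mathcal{R}_k(w)\bigr)=\sum_{j\ge1}(-1)^{j+1}Q_j(k)/[(j)_2\,w^j]+o(1)$ with $Q_j(k)$ precisely the polynomial of \eqref{eq:qp-defined}; exponentiating and comparing with the defining exponential expansion of the $q_s(k)$ identifies $q_p(k)$ with the $w^{-p}$-coefficient of $w^{1-p}\mathcal{R}_k(w)$.

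The substance is the same statement for $\alpha_k$. First I would note that both quantities are polynomials in $k$ of degree at most $p$ on the relevant range: for $w^{1-p}\mathcal{R}_k(w)$ this is immediate from the Stirling computation, since each $Q_j(k)$ has $k$-degree at most $j$ (the would-be $k^{j+1}$-terms cancel); for $\alpha_k$, recall from the proof of Lemma~\ref{lm:beta-final} that $\alpha_k$ is the coefficient of $z^k$ in $S(z)$, the left-hand side of \eqref{eq:duality}, so multiplying \eqref{eq:duality} by $(1-z)^{p+1}$ gives $(1-z)^{p+1}S(z)=\sum_{j=-n_{\max}}^{p-m_{\min}}\beta_j z^j$, whence $\alpha_k=\sum_j\beta_j\binom{k+p-j}{p}$; since $\binom{k+p-j}{p}$ already vanishes for $k<j\le k+p$, the terms with $j>k$ drop out and this equals the polynomial $\sum_{j=-n_{\max}}^{p-m_{\min}}\beta_j\binom{k+p-j}{p}$ for every $k\ge-m_{\min}$. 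It therefore suffices to establish $\alpha_k=q_p(k)$ for all sufficiently large integers $k$; this then promotes to an identity of polynomials and in particular holds for $k\ge-m_{\min}$.

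To carry out the matching I would pass to a Mellin--Barnes representation of $S(z)$: writing each ${}_{r}F_{r-1}$ factor in \eqref{eq:duality} via \eqref{eq:Frepr} (equivalently, expanding the terminating ${}_{2r}F_{2r-1}(1)$ in the explicit form \eqref{eq:alpha-defined} of $\alpha_k$ as a finite sum of residues) and folding the pieces, one reaches $\alpha_k=\tfrac{1}{2\pi\mathrm{i}}\int_{\mathcal{C}}\mathcal{R}_k(w)\,\phi(w)\,\mathrm{d}w$, where $\phi$ is a $k$-independent product of reflection (sine/gamma) factors and $\mathcal{C}$ separates two families of poles: summing the residues of one family returns the $r$-term sum of \eqref{eq:alpha-defined}, whereas for $k$ large the contour can be expanded to infinity, where $\phi$ is asymptotically irrelevant and the integral is governed by the expansion of $\mathcal{R}_k$, i.e.\ by its $w^{-p}$-coefficient after multiplication by $w^{1-p}$, which is $q_p(k)$. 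The values $q_{-1}(k)=0$, $q_0(k)=1$, $q_1(k)=Q_1(k)/2$ and $q_2(k)=Q_1^2(k)/8-Q_2(k)/6$ then come from unwinding the first steps of \eqref{eq:qp-defined}.

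I expect the main obstacle to lie exactly in this last step: choosing $\phi$ and $\mathcal{C}$ so that residue collection on one side reproduces \eqref{eq:duality}/\eqref{eq:alpha-defined} term by term while, for $k$ large, the behaviour at infinity is controlled sharply enough to isolate precisely the $w^{-1}$-term of the asymptotic series of $\mathcal{R}_k$. A contour-free variant is to recognise the $r$-term sum \eqref{eq:alpha-defined} as a value (or Taylor coefficient at infinity) of the Meijer $G$-function built from the gamma factors of $\mathcal{R}_k$, apply the N{\o}rlund expansion \eqref{eq:Norlund} together with Theorem~\ref{th:NorlundBell}, and watch the Bernoulli--N{\o}rlund polynomials in \eqref{eq:gnBell} collapse for the shifted parameters $(-a_i-k,\,1-b_i+m_i)$ against $(1-b_i-k,\,1-a_i+n_i)$, leaving the right-hand side of \eqref{eq:qp-defined}; here the residue bookkeeping is replaced by that routine but lengthy cancellation.
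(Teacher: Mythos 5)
Your overall strategy correctly identifies the mechanism behind the theorem: the polynomials $q_p(k)$ of \eqref{eq:qp-defined} are indeed the coefficients in the Stirling-type expansion of your gamma ratio $\mathcal{R}_k(w)$, and that half of your argument is correct, as is the reduction to large $k$ via the observation that $\alpha_k=\sum_{j}\beta_j\binom{k+p-j}{p}$ is a polynomial of degree at most $p$ in $k$ on the range $k\ge-m_{\min}$. The paper, however, does none of this work: its proof consists of splitting $S(z)$ into the ranges $k<-m_{\min}$ and $k\ge-m_{\min}$ and then citing \cite[Lemma~1]{KarpKuzn}, which asserts exactly that the coefficient $\alpha_k$ equals $q_p(k)$ for $k\ge-m_{\min}$; the hypergeometric form \eqref{eq:alpha-defined} of $\alpha_k$ was already obtained in the proof of Lemma~\ref{lm:beta-final}. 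So what you are attempting is a self-contained re-derivation of the cited lemma, which is legitimate but held to a higher standard than a citation.

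The genuine gap sits precisely where you say you expect it: the identification of $\alpha_k$ with the $w^{-p}$-coefficient of $w^{1-p}\mathcal{R}_k(w)$. You never specify the weight $\phi(w)$ or the contour $\mathcal{C}$, do not verify that the residues of $\mathcal{R}_k(w)\phi(w)$ at one family of poles reproduce the $r$ terms of \eqref{eq:alpha-defined} (equivalently, the Cauchy products of the two ${}_{r}F_{r-1}$ factors in \eqref{eq:duality}), and do not establish the control at infinity needed to extract the single coefficient $q_p(k)$ from the contour integral rather than the full asymptotic series plus an unquantified remainder. The ``contour-free variant'' via the N{\o}rlund expansion and Theorem~\ref{th:NorlundBell} is likewise only named, not carried out. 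Since the Stirling computation and the polynomiality of both sides are routine, this unexecuted step is the entire analytic content of the theorem; as written, the proposal is a plausible plan for reproving \cite[Lemma~1]{KarpKuzn}, not a proof.
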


\begin{proof}
Keeping the notation from the proof of Lemma~\ref{lm:beta-final} we can write
\begin{gather*}
S(z)=\sum\limits_{k=-n_{\max}}^{-m_{\min}-1}\alpha_{k}z^k+\sum\limits_{k=-m_{\min}}^{\infty}\alpha_{k}z^k,
\end{gather*}
where the first sum is zero if $-n_{\max}>-m_{\min}-1$. Furthermore
\cite[Lemma~1]{KarpKuzn} shows that $\alpha_{k}$ in the second sum is precisely the polynomial $q_p(k)$ defined above, see the first formula below the proof of \cite[Lemma~1]{KarpKuzn}.
\end{proof}
Writing as before $\nu=\nu(\a;\b)=\sum_{i=1}^{r}(b_i-a_i)$, we have

\begin{Theorem}\label{th:sumbeta}
Let $\a,\b\in\C^r$, $\m,\n\in\Z^r$ and
\begin{gather}\label{eq:bothdivergent}
\nu-M+N-1<0, \qquad -\nu+r-1<0.
\end{gather}
Then the numbers $\beta_k$ defined in \eqref{eq:beta-final} satisfy the following identity
\begin{gather}\label{eq:betasum}
\sum\limits_{k=-n_{max}}^{p-m_{min}}\beta_k=(-1)^{r-1}(\nu)_{1-r}(1-\nu)_{M-N}.
\end{gather}
\end{Theorem}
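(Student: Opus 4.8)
The plan is to extract the sum $\sum_k \beta_k$ directly from the duality identity \eqref{eq:duality} by a suitable substitution, rather than by summing the explicit formula \eqref{eq:beta-final} term by term. Recall that \eqref{eq:duality} reads
\begin{gather*}
S(z)=(1-z)^{-p-1}\sum\limits_{j=-n_{\max}}^{p-m_{\min}}\beta_jz^{j},
\end{gather*}
so that $\sum_k\beta_k$ is obtained by setting $z=1$ in $(1-z)^{p+1}S(z)$. The difficulty is that under the hypotheses \eqref{eq:bothdivergent} \emph{both} ${}_rF_{r-1}$ factors on the left-hand side of \eqref{eq:duality} are divergent at $z=1$ (the parametric excess of the first factor is $\nu-r+1$, negative by the second inequality in \eqref{eq:bothdivergent}; the second factor has excess $\nu-M+N-r+1$, also negative), so one cannot simply substitute $z=1$. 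Instead I would work with the Laurent expansion of $(1-z)^{p+1}S(z)$ near $z=1$: since the right-hand side of \eqref{eq:duality} shows that $(1-z)^{p+1}S(z)$ is a Laurent polynomial in $z$, it is in particular regular at $z=1$, and its value there is exactly $\sum_j\beta_j$.

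The key step is therefore to compute $\lim_{z\to1}(1-z)^{p+1}S(z)$ by analyzing each summand of $S(z)$ near $z=1$. For this I would invoke the N{\o}rlund/B\"uhring expansion \eqref{eq:KPSIGMA2.20}, or more precisely formula \eqref{eq:p+1Fp(1)} together with the leading-order behaviour of ${}_rF_{r-1}(z)$ as $z\to1^-$: when the parametric excess $\mu$ of a ${}_rF_{r-1}$ is negative and non-integer, one has ${}_rF_{r-1}(\cdot;\cdot;z)\sim C\,(1-z)^{\mu}$ with an explicit constant $C$ built from gamma functions (this is the $f$-part of \eqref{eq:KPSIGMA2.20} with $k=0$, i.e.\ $f_p(0;\a;\b)=\Gamma(-\nu)$ scaled appropriately). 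Multiplying the two asymptotics in the $i$-th term of $S(z)$ produces a factor $(1-z)^{(\nu-r+1)+(\nu-M+N-r+1)}$ times lower-order corrections; the exponent is $2\nu-M+N-2r+2$, and after multiplying by $(1-z)^{p+1}$ with $p=\max\{-1,M-N-r+1\}$ one should find that, for each $i$, only finitely many terms survive in the limit $z\to1$ and the leading ones combine. The bookkeeping of which powers of $(1-z)$ survive — and checking that the singular terms cancel across $i=1,\dots,r$, leaving a finite limit — is the part that requires care.

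Alternatively, and perhaps more cleanly, I would avoid asymptotics altogether and use the following device: replace $z$ by a formal/analytic variable and integrate. Note that $\int_0^1 S(z)\,(1-z)^{p}\,dz$ can be evaluated in two ways. On one hand, using the right-hand side of \eqref{eq:duality}, $\int_0^1(1-z)^{-1}\sum_j\beta_jz^j\,dz$ diverges, so one instead differentiates or uses a contour; this suggests the asymptotic route is the natural one after all. So concretely I would: (i) fix notation for the leading coefficient $C_i^{(1)}$ of the first hypergeometric factor and $C_i^{(2)}$ of the second in the $i$-th term, reading them off from \eqref{eq:KPSIGMA2.20}; (ii) write $S(z)=\sum_i T_i(z)$ and expand each $T_i(z)(1-z)^{p+1}$ to the needed order around $z=1$; (iii) sum over $i$, observe the cancellation of any negative powers of $(1-z)$ (forced by the fact that the total is a Laurent polynomial), and identify the constant term; (iv) simplify the resulting gamma-function expression using the reflection formula and the Pochhammer identities in \eqref{eq:pochammer} to reach the closed form $(-1)^{r-1}(\nu)_{1-r}(1-\nu)_{M-N}$.

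The main obstacle I anticipate is step (iii): verifying that the individual terms' singular contributions at $z=1$ cancel and pinning down the finite remainder requires either a clever symmetry argument or a somewhat involved manipulation of the N{\o}rlund coefficients $g_n^{p}$ — in effect one is re-deriving, in a summed form, the content of \cite[Lemma~1]{KarpKuzn} and Theorem~\ref{th:alpha}. In fact a shortcut presents itself: by Theorem~\ref{th:alpha} the coefficients $\alpha_k$ of $S(z)=\sum_{k\ge-n_{\max}}\alpha_kz^k$ equal the polynomial values $q_p(k)$ for all $k\ge-m_{\min}$, and $\sum_j\beta_j=(1-z)^{p+1}S(z)\big|_{z=1}$ can be rewritten, via the binomial identity used in the proof of Lemma~\ref{lm:beta-final}, as a finite alternating sum of the $\alpha_k$'s that telescopes against the polynomial structure of $q_p$. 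I would push this combinatorial identity — $\sum_k\binom{p+1}{k-j}(-1)^{k-j}$ applied to a degree-$\le p$ polynomial in $j$ (which $q_p$ is, by its recurrence) yields a constant — to reduce $\sum_j\beta_j$ to a single evaluation, and then match it to the right-hand side of \eqref{eq:betasum} by a direct gamma-function computation. That combinatorial reduction, plus the final gamma simplification, is where the real work lies; everything else is assembling pieces already available in the excerpt.
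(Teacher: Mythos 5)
Your primary route is the same as the paper's: multiply \eqref{eq:duality} by $(1-z)^{p+1}$, replace each ${}_{r}F_{r-1}$ factor by its leading-order behaviour at $z=1$, and pass to the limit. The execution, however, contains a computational error that makes you anticipate a difficulty which is not there and obscures the reason the argument closes. The parametric excess (lower minus upper parameters) of the first factor in the $i$-th summand is $\sum_{j\ne i}(1+a_j-a_i)-\sum_{j}(b_j-a_i)=r-1-\nu$, not $\nu-r+1$ (note that your stated value is \emph{positive} under the second inequality of \eqref{eq:bothdivergent}, contradicting your own claim that it is negative), and the excess of the second factor is $\nu-M+N-1$, not $\nu-M+N-r+1$. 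Both are negative exactly under \eqref{eq:bothdivergent}; adding the two inequalities gives $M-N-r+1>-1$, hence $p=M-N-r+1$, and the product of the two leading powers is $(1-z)^{(r-1-\nu)+(\nu-M+N-1)}=(1-z)^{-p-1}$ for \emph{every} $i$. Consequently, after multiplication by $(1-z)^{p+1}$ each summand separately converges to the product of the two leading constants of the form $\Gamma(\d)\Gamma(-\nu(\cb;\d))/\Gamma(\cb)$: there are no residual singular powers and no cancellation across $i=1,\dots,r$ to verify. Your exponent $2\nu-M+N-2r+2$ is generically non-integer, so if it were correct the limit of $(1-z)^{p+1}S(z)$ would not even exist; the whole point is that the $\nu$'s cancel between the two factors.

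There are two further gaps. First, the final step is not a ``direct gamma-function computation'': after taking the limit one is left with a sum over $i=1,\dots,r$ of gamma-function ratios, and reducing it to $(-1)^{r-1}(\nu)_{1-r}(1-\nu)_{M-N}$ requires the reflection formula together with the identity $\sum_{i=1}^{r}\sin[\pi((\b-a_i))]/\sin[\pi((\a_{[i]}-a_i))]=\sin(\pi\nu)$ from \cite[identity~(3.14)]{KPSIGMA}; without supplying this the proof is incomplete. Second, the proposed shortcut via Theorem~\ref{th:alpha} cannot work as stated: since $q_p(k)$ is a polynomial of degree at most $p$ in $k$, its $(p+1)$-th finite difference vanishes, so all ``interior'' $\beta_k$ are zero and the sum \eqref{eq:betasum} is carried entirely by the boundary coefficients $\alpha_k$ with $-n_{\max}\le k<-m_{\min}$ together with the truncation of the binomial sum --- precisely the range for which Theorem~\ref{th:alpha} provides no information.
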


\begin{proof} According to \eqref{eq:f-g} the asymptotic relation
\begin{gather*}
{}_{r}F_{r-1}\left(\left.\begin{matrix}\cb\\\d\end{matrix}\right\vert z\right)=\frac{\Gamma(\d)\Gamma(-\nu(\cb;\d))}{\Gamma(\cb)}(1-z)^{\nu(\cb;\d)}(1+o(1))~~\text{as}~~z\to1,
\end{gather*}
is valid if $\nu(\cb;\d)=\sum_{j=1}^{r-1}d_j-\sum_{j=1}^{r}c_j<0$. Hence, under conditions \eqref{eq:bothdivergent} the above relation is applicable to both series in each summand in \eqref{eq:duality}. Summing both inequalities in~\eqref{eq:bothdivergent} we see that
$M-N-r+1>-1$ which implies by definition of $p$ that $p=M-N-r+1$. A~straightforward calculation using the asymptotic relation above then shows that the total power of $1-z$ does not depend on $i$ and equals $-p-1$. Hence, we get from \eqref{eq:duality} as $z\to1$
\begin{multline*}
\Gamma(-\nu+M-N+1)\Gamma(\nu-r+1)(1+o(1-z))
\\
\times\sum\limits_{i=1}^{r}\frac{(1-\b+a_i)_{\m-n_i}z^{-n_i}}{\big(a_i-\a_{[i]}\big)_{\n_{[i]}-n_i+1}}
\frac{\Gamma\big(1+\a_{[i]}-a_i\big)\Gamma\big(1-\a_{[i]}+a_i+\n_{[i]}-n_i\big)}{\Gamma(\b-a_i)\Gamma(1-\b+a_i+\m-n_i)}=\sum\limits_{j=-n_{\max}}^{p-m_{\min}}\beta_jz^{j}.
\end{multline*}
In the limit $z=1$ we thus obtain after simple rearrangement and application of the reflection formula
$\Gamma(x)\Gamma(1-x)=\pi/\sin(\pi{x})$ that
\begin{gather*}
(-1)^{r-1}\Gamma(-\nu+M-N+1)\Gamma(\nu-r+1)
\sum\limits_{i=1}^{r}\frac{\sin[\pi((\b-a_i))]}{\pi\sin[\pi((\a_{[i]}-a_i))]}=\sum\limits_{j=-n_{\max}}^{p-m_{\min}}\beta_j.
\end{gather*}
Finally, an application of the identity \cite[identity~(3.14)]{KPSIGMA}
\begin{gather*}
\sum\limits_{i=1}^{r}\frac{\sin[\pi((\b-a_i))]}{\sin[\pi((\a_{[i]}-a_i))]}=\sin(\pi\nu)
\end{gather*}
yields \eqref{eq:betasum}.
\end{proof}

In a rather recent work \cite{Guo2015} the authors discovered some curious three-term duality relations for the $q$-hypergeometric functions. As $q\to1$ they naturally lead to identities for ordinary generalized hypergeometric functions which do not seem to appear in the literature previously. The general case and its corollary are presented below. We will then use them to derive three-term relations for terminating hypergeometric series evaluated at $\pm1$.

\begin{Lemma}
	Let $r,s\ge 0$ be integers, $a,b,c,d\in\C$, $\e\in\C^r$, $\f\in\C^s$. Then we have in the sense of formal power series\emph{:}
\begin{gather}
a(b-d)(c-d)(b+c-a)\, {}_{r+4}F_{s+3}\left.\! \left(\begin{matrix} b+c-a-1,b+c-2,c,d,\e\\ a,b-1,b+c-d-1,\f\end{matrix}\right\vert z\right)\nonumber
\\ \qquad
{}\times{}_{r+4}F_{s+3}\left.\! \left(\begin{matrix} b+c-a+1,b+c,c,d,\e+1\\ a,b+1,b+c-d+1,\f+1\end{matrix}\right\vert z\right)\nonumber
\\ \qquad
{}-d(b-a)(c-a)(b+c-d){}_{r+4}F_{s+3}\left.\! \left(\begin{matrix} b+c-a,b+c-2,c,d-1,\e\\ a-1,b-1,b+c-d,\f\end{matrix}\right\vert z\right)\nonumber
\\ \qquad\hphantom{=}
{}\times{}_{r+4}F_{s+3}\left.\! \left(\begin{matrix} b+c-a,b+c,c,d+1,\e+1\\ a+1,b+1,b+c-d,\f+1\end{matrix}\right\vert z\right)\nonumber
\\ \qquad
{}=bc(a-d)(b+c-a-d)\, {}_{r+4}F_{s+3}\left.\! \left(\begin{matrix} b+c-a,b+c-2,c-1,d,\e\\ a-1,b,b+c-d-1,\f\end{matrix}\right\vert z\right)\nonumber
\\ \qquad\hphantom{=}
{}\times{}_{r+4}F_{s+3}\left.\! \left(\begin{matrix} b+c-a,b+c,c+1,d,\e+1\\ a+1,b,b+c-d+1,\f+1\end{matrix}\right\vert z\right)\!.
\label{eq:dualityGuo}\end{gather}
\end{Lemma}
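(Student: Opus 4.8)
The plan is to obtain \eqref{eq:dualityGuo} as the $q\to1$ limit of the three-term $q$-hypergeometric duality established in \cite{Guo2015}. First I would rewrite that $q$-identity in a form tailored to the limit: after the substitutions $a\mapsto q^{a}$, $b\mapsto q^{b}$, $c\mapsto q^{c}$, $d\mapsto q^{d}$, $e_i\mapsto q^{e_i}$, $f_j\mapsto q^{f_j}$, Guo's relation becomes a bilinear identity among six basic hypergeometric series ${}_{r+4}\phi_{s+3}$ whose numerator and denominator parameter lists are precisely the $q$-images of the six lists occurring in \eqref{eq:dualityGuo} (so that the $\pm1$ shifts on $a,b,c,d$ become multiplications by $q^{\pm1}$, and the increments $\e\mapsto\e+1$, $\f\mapsto\f+1$ become $q^{e_i}\mapsto q^{e_i+1}$, $q^{f_j}\mapsto q^{f_j+1}$), with scalar prefactors that are products of four factors each of the shape $1-q^{u}$: here $u$ runs over $a$, $b-d$, $c-d$, $b+c-a$ on the first line, over their images under $a\leftrightarrow d$, $b\leftrightarrow c$ on the third line, and over $b$, $c$, $a-d$, $b+c-a-d$ on the right-hand side. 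The argument of all six series is the same, up to the fixed rescaling dictated by the Gasper--Rahman normalisation of ${}_{r+4}\phi_{s+3}$ (a sign $(-1)^{s-r}$ may accompany $z$), which one fixes once and for all.

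Second, I would pass to the limit coefficientwise in $z$. For each fixed $N$ the coefficient of $z^{N}$ in a product of two series ${}_{r+4}\phi_{s+3}$ is a finite sum of ratios of $q$-shifted factorials, and the elementary limits
\[
\lim_{q\to1}\frac{(q^{x};q)_{n}}{(1-q)^{n}}=(x)_{n},\qquad
\lim_{q\to1}\,{}_{r+1}\phi_{s}\!\left(q^{\a};q^{\b};q,z\right)={}_{r+1}F_{s}(\a;\b;z)
\]
show that this coefficient, regarded as a rational function of $q$, is regular at $q=1$ and takes there exactly the value of the coefficient of $z^{N}$ in the corresponding ordinary hypergeometric product in \eqref{eq:dualityGuo}. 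Since the $q$-identity holds for every $q$ in a punctured neighbourhood of $1$, the limiting relation holds as an identity of formal power series in $z$, which is precisely the assertion of the lemma.

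The one step needing genuine care is the bookkeeping of the powers of $1-q$ in the three scalar prefactors: each factor satisfies $1-q^{u}=(1-q)\,u+O\big((1-q)^{2}\big)$, so all three prefactors are divisible by exactly $(1-q)^{4}$, and after dividing the whole $q$-identity by $(1-q)^{4}$ the three prefactors acquire the finite limits $a(b-d)(c-d)(b+c-a)$, $d(b-a)(c-a)(b+c-d)$ and $bc(a-d)(b+c-a-d)$, while the six ${}_{r+4}\phi_{s+3}$ factors retain their limits unchanged; the common power $4$ is what makes the limit consistent. I also need to pin down the dictionary between Guo's parameter names and ours so that the six lists and all the $\pm1$ and $+1$ shifts match exactly --- a routine but error-prone matching where sign and indexing slips are most likely to occur. (One could instead try a direct proof by expanding both sides as formal power series and checking that the coefficient of each $z^{N}$ is a hypergeometric identity amenable to the WZ method, but this is considerably heavier than the $q\to1$ route that the source identity in \cite{Guo2015} naturally supplies.)
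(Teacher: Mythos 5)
Your proposal is correct and follows exactly the route the paper takes: the paper's proof is a one-line statement that substituting $a\mapsto q^a$, $b\mapsto q^b$, $c\mapsto q^c$, $d\mapsto q^d$, $e_i\mapsto q^{e_i}$, $f_j\mapsto q^{f_j}$ into Theorem~1.1 of \cite{Guo2015} and letting $q\to1$ yields the result. Your more detailed accounting of the $(1-q)^4$ factors in the prefactors and the coefficientwise passage to the limit simply makes explicit what the paper leaves implicit.
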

\begin{proof}
Replacing $a$, $b$, $c$, $d$ by $q^{a}$, $q^{b}$, $q^{c}$, $q^{d}$; $e_1,e_2,\dots,e_r$ by $q^{e_1}, q^{e_2},\dots,q^{e_r}$ and $f_1,f_2,\dots,f_s$ by $q^{f_1},q^{f_2},\dots,q^{f_s}$ and letting $q\to1$ in \cite[Theorem 1.1]{Guo2015} we get the result.
\end{proof}

By specializing parameters in \cite[Theorem 1.1]{Guo2015} the authors get \cite[Corollary 1.2]{Guo2015}.
Its~limi\-ting case is the following:
\begin{Lemma}\label{lm:Guo-reduced}
For a given integer $r\ge1$ suppose $\a\in\C^{r+1}$ and $\b\in\C^{r}$. Then
\begin{gather*}
a_1(a_2-b_1)\,{}_{r+1}F_r\left.\!\!\left(\begin{matrix} a_1\!-1,\a_{[1]}\\ b_1\!-1,\b_{[1]}\end{matrix}\right\vert z\right) {}_{r+1}F_r\left.\!\!\left(\begin{matrix} a_2,\a_{[2]}\!+1\\ \b+1\end{matrix}\right\vert z\right)\!
-a_2(a_1-b_1)\,{}_{r+1}F_r\left.\!\!\left(\begin{matrix} a_2-1,\a_{[2]}\\ b_1-1,\b_{[1]}\end{matrix}\right\vert z\right)\!
\\ \qquad
{}\times {}_{r+1}F_r\left.\!\!\left(\begin{matrix} a_1,\a_{[1]}+1\\ \b+1\end{matrix}\right\vert z\right)\nonumber
=b_1(a_2-a_1)\,{}_{r+1}F_r\left.\!\!\left(\begin{matrix} \a\\ \b\end{matrix}\right\vert z\right)\,{}_{r+1}F_r\left.\!\!\left(\begin{matrix} a_1,a_2,\a_{[1,2]}+1\\ b_1,\b_{[1]}+1\end{matrix}\right\vert z\right)\!.
\end{gather*}
\end{Lemma}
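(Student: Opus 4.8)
I would obtain this identity in exactly the way the previous lemma was deduced from \cite{Guo2015}: as the confluent limit $q\to1$ of \cite[Corollary~1.2]{Guo2015}, which is the $q$-analogue of the asserted relation. Thus the proof reduces to a short limiting computation once the $q\to1$ dictionary is set up, with the only real work being the matching of notation.

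First I would record \cite[Corollary~1.2]{Guo2015} as a formal power series identity in $z$ between three terms, each a product of two basic hypergeometric series ${}_{r+1}\phi_r$ with base $q$ whose numerator and denominator parameters are monomials $q^{a_1},\dots,q^{a_{r+1}}$ and $q^{b_1},\dots,q^{b_r}$ — the distinguished pair $a_1,a_2$ among the $a$'s and the distinguished $b_1$ among the $b$'s being precisely the exponents that get shifted by $\pm1$. Up to a common power of $q$, the three scalar prefactors there are the $q$-brackets $(1-q^{a_1})(1-q^{a_2-b_1})$, $(1-q^{a_2})(1-q^{a_1-b_1})$ and $(1-q^{b_1})(1-q^{a_2-a_1})$, each a product of two factors of the form $1-q^{\bullet}$.

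Next I would substitute $q^{a_i}\mapsto a_i$, $q^{b_i}\mapsto b_i$ (renaming the exponents) and divide both sides by $(1-q)^2$. Coefficientwise in $z$ every entry of \cite[Corollary~1.2]{Guo2015} is a rational function of $q$ that is regular at $q=1$, so the limit may be taken term by term: $(1-q^{c})/(1-q)\to c$ converts the prefactors into $a_1(a_2-b_1)$, $a_2(a_1-b_1)$, $b_1(a_2-a_1)$; a monomial shift $q^{c}\to q^{c\pm1}$ of a parameter becomes the additive shift $c\to c\pm1$; and each ${}_{r+1}\phi_r$ goes over to the corresponding ${}_{r+1}F_r$. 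Reading off the shifted parameter lists and comparing with those displayed in the statement then gives the claim.

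The one delicate point is bookkeeping: I must align the labelling of the parameters in \cite[Corollary~1.2]{Guo2015} with the present convention $\a\in\C^{r+1}$, $\b\in\C^{r}$ (which entry is $a_1$, which is $a_2$, and the fact that in the ``all-shifted'' factors the remaining $a$'s together with the remaining $b$'s are simply shifted up by one, while $a_1,a_2,b_1$ are not), and verify that the normalization by $(1-q)^2$ leaves behind no surviving power of $q$ that would contribute an extra monomial factor in $z$. This is pure transcription once the dictionary is fixed, and I expect no genuine analytic obstacle — the limiting argument is identical in spirit to the one used for \eqref{eq:dualityGuo}.
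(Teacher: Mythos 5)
Your proposal matches the paper's proof exactly: the authors likewise obtain the identity by substituting $q^{a_i}$, $q^{b_i}$ for the parameters in \cite[Corollary~1.2]{Guo2015} and letting $q\to1$, just as they derived \eqref{eq:dualityGuo} from \cite[Theorem~1.1]{Guo2015}. Your additional bookkeeping about normalizing the prefactors by $(1-q)^2$ and checking regularity at $q=1$ is a correct elaboration of the same one-line argument.
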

\begin{proof} Replacing $a_1,a_2,\dots,a_r$ by $q^{a_1},q^{a_2},\dots,q^{a_r}$ and $b_1,b_2,\dots,b_r$ by $q^{b_1},q^{b_2},\dots,q^{b_r}$, and letting $q\to1$ in \cite[Corollary~1.2]{Guo2015} we arrive at the claim.
\end{proof}
\begin{Theorem}\label{th:Guo-general}
Let $r,s\ge0$ be integers, $a,b,c,d\in\C$, $\e\in\C^r$, $\f\in\C^s$. Define $t=r+s+8$, $\hat{a}_n=1-a-n$, $\hat{b}_n=1-b-n$, $\hat{c}_n=1-c-n$, $\hat{d}_n=1-d-n$ for any integer $n\ge0$. Then we have
\begin{gather*}
\frac{(b-d)(c-d)(a-1)_{2}(b+c-a-1)_{2}}{(a+n-1)(b+c-a+n-1)}
\\ \qquad
{}\times{}_{t}F_{t-1}\left.\!\!\left(\begin{matrix} -n, \hat{b}_n-c+d+1, b+c-a+1, c, d, \hat{a}_n,\hat{b}_n+1, b+c, \e, 2-\f-n \\b+c-d+1, \hat{b}_n-c+a+1, \hat{c}_n,\hat{d}_n, a, b+1,\hat{b}_n-c+2, 2-\e-n, \f \end{matrix}\right\vert (-1)^{t} \right)
\\ \qquad
{}-\frac{(b-a)(c-a)(d-1)_{2}(b+c-d-1)_{2}}{(d+n-1)(b+c-d+n-1)}
\\ \qquad
{}\times{}_{t}F_{t-1}\left.\!\!\left(\begin{matrix} -n, \hat{b}_n-c+d, b+c-a, c, d+1, \hat{a}_n+1, \hat{b}_n+1, b+c, \e, 2-\f-n\\ b+c-d, \hat{b}_n-c+a, \hat{c}_n, \hat{d}_n+1, a+1, b+1, \hat{b}_n-c+2, 2-\e-n, \f \end{matrix}\right\vert (-1)^{t} \right)
\\ \qquad
{}=\frac{(a-d)(b+c-a-d)(b-1)_{2}(c-1)_{2}}{(b+n-1)(c+n-1)}		
\\ \qquad
{}\times{}_{t}F_{t-1}\left.\!\!\left(\begin{matrix} -n, \hat{b}_n-c+d+1, b+c-a, c+1,d,\hat{a}_n+1,\hat{b}_n, b+c, \e,2-\f-n\\ b+c-d+1, \hat{b}_n-c+a, \hat{c}_n+1,\hat{d}_n,a+1, b, \hat{b}_n-c+2, 2-\e-n, \f \end{matrix}\right\vert (-1)^{t} \right)\!.
\end{gather*}
\end{Theorem}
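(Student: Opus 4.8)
The plan is a coefficient‑extraction argument from the formal power series identity \eqref{eq:dualityGuo}. Since both sides of \eqref{eq:dualityGuo} are equal as power series in $z$, their coefficients at $z^n$ agree for every $n\in\N_0$, and my claim is that this coincidence, once the coefficients are written in hypergeometric form, is precisely the asserted three‑term relation.

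First I would expand each of the three products ${}_{r+4}F_{s+3}(\cdots;z)\cdot{}_{r+4}F_{s+3}(\cdots;z)$ occurring in \eqref{eq:dualityGuo} as a Cauchy product. Writing the first factor of a product as $\sum_m u_m z^m$ and the second as $\sum_\ell v_\ell z^\ell$, one has $[z^n]=\sum_{k=0}^n u_{n-k}v_k$. Here $u_{n-k}$ is a ratio of Pochhammer symbols evaluated at $n-k$, divided by $(n-k)!$; applying the elementary identities $(\alpha)_{n-k}=(-1)^k(\alpha)_n/(1-\alpha-n)_k$ and $(n-k)!=(-1)^kn!/(-n)_k$ (these are the relations \eqref{eq:pochammer}, also used in the proof of Lemma~\ref{lm:beta-final}) turns $u_{n-k}$ into $u_n$ times a ratio of Pochhammer symbols in $k$: every upper parameter $\alpha$ of the first factor becomes a lower parameter $1-\alpha-n$ of a terminating series, every lower parameter $\beta$ becomes an upper parameter $1-\beta-n$, and the reciprocal factorial supplies the new upper parameter $-n$, while the parameters of the (kept) second factor are simply carried over. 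A parameter count gives $1+(r+4)+(s+3)=t$ upper and $(r+4)+(s+3)=t-1$ lower parameters, so $[z^n]$ of each product equals $u_n$ times a terminating ${}_tF_{t-1}$; matching the shifted parameters against the abbreviations $\hat a_n=1-a-n$, $\hat b_n=1-b-n$, $\hat c_n=1-c-n$, $\hat d_n=1-d-n$ reproduces exactly the three hypergeometric functions in the statement, after the cosmetic renaming of the free vectors $\e,\f$ by $\e-\mathbf 1$, $\f-\mathbf 1$, which turns $\e+1\mapsto\e$, $\f+1\mapsto\f$, $1-\e-n\mapsto 2-\e-n$, $1-\f-n\mapsto 2-\f-n$.

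Next I would collect the $(-1)^k$ factors created by the reversals and by $(n-k)!$: the net exponent accumulated in the $k$‑th summand is $k\bigl((r+4)+(s+3)+1\bigr)=kt$, so each terminating series is automatically evaluated at $(-1)^t$, as asserted. Finally, the relation coming from $[z^n]$ of \eqref{eq:dualityGuo} has the shape $\mathrm{scalar}_i\cdot u^{(i)}_n\cdot{}_tF_{t-1}^{(i)}$ in its $i$‑th slot, where $\mathrm{scalar}_i$ is the polynomial prefactor of the $i$‑th product in \eqref{eq:dualityGuo} and $u^{(i)}_n$ is the $z^n$‑coefficient of that product's first factor; dividing the whole relation by the common quantity $\mu_n$ that cancels these $z^n$‑coefficients and simplifying each $\mathrm{scalar}_i\,u^{(i)}_n/\mu_n$ via Pochhammer identities such as $(a-1)(a)_n=(a-1)_n(a+n-1)=(a-1)_{n+1}$ yields the compact rational prefactors $\frac{(b-d)(c-d)(a-1)_2(b+c-a-1)_2}{(a+n-1)(b+c-a+n-1)}$, and analogously for the other two terms.

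The hard part will be exactly this bookkeeping: tracking which factor of each product is reversed and which is kept, carrying the parameter shifts and the sign $(-1)^t$ through consistently, and — the genuinely delicate point — checking that the normalizing factor $\mu_n$ is the same for all three terms, so that after the division the three quotients collapse to the stated rational functions. Everything else is routine power‑series and Pochhammer manipulation.
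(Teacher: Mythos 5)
Your proposal is correct and follows exactly the route the paper takes: its proof of Theorem~\ref{th:Guo-general} consists precisely of equating coefficients of $z^n$ in \eqref{eq:dualityGuo} via the Cauchy product and the Pochhammer reversals \eqref{eq:pochammer}. Your bookkeeping checks out — the sign count $(-1)^{k(r+s+8)}=(-1)^{kt}$, the parameter tally $t$ over $t-1$, the renaming $\e\to\e-1$, $\f\to\f-1$, and the common normalizer $\mu_n=\dfrac{(b+c-a)_n(b+c-2)_n(c)_n(d)_n(\e)_n}{(a-1)_n(b-1)_n(b+c-d-1)_n(\f)_n\,n!}$ all come out as you predict.
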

\begin{proof}
	Equating coefficients at $z^n$ on both sides of \eqref{eq:dualityGuo}, using the standard Cauchy product and relations \eqref{eq:pochammer}
	we arrive at the result.
\end{proof}

In a similar fashion from Lemma~\ref{lm:Guo-reduced} we obtain the following:
\begin{Theorem}\label{th:Guo-particular}
For a given integer $r\ge 1$ suppose $\a\in\C^{r+1}$, $\b\in\C^{r}$. Then for all integer $n\ge0$
\begin{gather*}
(a_2-b_1)\frac{a_1(a_1-1)}{(a_1+n-1)}\, {}_{2r+2}F_{2r+1}\!\!\left(\begin{matrix} -n,2-b_1-n,1-\b_{[1]}-n,a_2,\a_{[2]}+1\\2-a_1-n,1-\a_{[1]}-n,\b+1\end{matrix} \right)
\\ \qquad
{}-(a_1-b_1)\frac{a_2(a_2-1)}{(a_2+n-1)}\,
{}_{2r+2}F_{2r+1}\!\!\left(\begin{matrix} -n,2-b_1-n,1-\b_{[1]}-n,a_1,\a_{[1]}+1\\2-a_2-n,1-\a_{[2]}-n,\b+1\end{matrix} \right)
\\ \qquad
{}=(a_2-a_1)\frac{b_1(b_1-1)}{(b_1+n-1)}\,
{}_{2r+2}F_{2r+1}\!\!\left(\begin{matrix} -n,1-\b-n,a_1,a_2,\a_{[1,2]}+1\\
1-\a-n,b_1,\b_{[1]}+1\end{matrix}\right)\!.
\end{gather*}
\end{Theorem}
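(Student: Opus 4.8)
The plan is to read Theorem~\ref{th:Guo-particular} off Lemma~\ref{lm:Guo-reduced} by comparing coefficients of $z^n$ on the two sides, exactly as Theorem~\ref{th:Guo-general} was obtained from \eqref{eq:dualityGuo}. Each of the three terms of Lemma~\ref{lm:Guo-reduced} is a product of two power series of type ${}_{r+1}F_r(\,\cdot\,;\,\cdot\,;z)$, so its coefficient of $z^n$ is the Cauchy convolution that pairs the $k$-th Taylor coefficient of the left factor with the $(n-k)$-th Taylor coefficient of the right factor and sums over $0\le k\le n$. Reading all three terms this way turns Lemma~\ref{lm:Guo-reduced} into an identity valid for every integer $n\ge0$, which is what is to be proved; since the lemma may equally be read as an identity of formal power series, no convergence question arises.

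First I would, in each of the three convolutions, reverse the summation index via $k\mapsto n-k$ so that the factor carrying the \emph{depressed} parameters --- ${}_{r+1}F_r(a_1-1,\a_{[1]};b_1-1,\b_{[1]};z)$ in the first term, ${}_{r+1}F_r(a_2-1,\a_{[2]};b_1-1,\b_{[1]};z)$ in the second, and ${}_{r+1}F_r(\a;\b;z)$ on the right-hand side --- contributes the Pochhammer symbols of argument $n-k$. Then I would apply the three elementary identities \eqref{eq:pochammer} componentwise to rewrite every such $(\cdot)_{n-k}$, together with $(n-k)!$, in terms of $k$-Pochhammers. The point requiring care is the sign bookkeeping, but it is painless: for the depressed factor the objects of argument $n-k$ are its $r+1$ numerator Pochhammer symbols, its $r$ denominator ones and the factorial $(n-k)!$ --- a total of $2r+2$ --- and each reflection in \eqref{eq:pochammer} contributes exactly one power of $(-1)^k$, so the combined sign is $(-1)^{(2r+2)k}=1$. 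Hence each convolution collapses to a \emph{terminating} hypergeometric series at unit argument; a term-by-term inspection identifies it as the ${}_{2r+2}F_{2r+1}$ printed in the statement, with $-n$ among the upper parameters, the reflected lower data of the depressed factor and the upper data of the other factor as the remaining upper parameters, and the reflected upper data of the depressed factor together with the lower data of the other factor below.

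It remains to normalise the leading constants. After \eqref{eq:pochammer}, besides the constant already present in Lemma~\ref{lm:Guo-reduced}, the $z^n$-coefficient of the first term picks up $(a_1-1)_n(\a_{[1]})_n/[(b_1-1)_n(\b_{[1]})_n\,n!]$, that of the second term the same expression with $a_1-1$ replaced by $a_2-1$, and that of the right-hand side $(\a)_n/[(\b)_n\,n!]$. Dividing the whole identity by the common factor $(\a)_n/[(b_1-1)_n(\b_{[1]})_n\,n!]$ and simplifying with $(x-1)_n/(x)_n=(x-1)/(x+n-1)$ and $(\a)_n=(a_j)_n(\a_{[j]})_n$ converts these leading constants into $(a_2-b_1)\,a_1(a_1-1)/(a_1+n-1)$, $-(a_1-b_1)\,a_2(a_2-1)/(a_2+n-1)$ and $(a_2-a_1)\,b_1(b_1-1)/(b_1+n-1)$, giving exactly the asserted three-term relation.

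The genuinely delicate step is the middle one: carrying out the reflections in \eqref{eq:pochammer} accurately enough to read off the exact parameter lists, and verifying that the three $z^n$-coefficients indeed share the single common factor used above, so that the final division is legitimate term by term. Everything else is routine.
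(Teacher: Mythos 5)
Your proposal is correct and follows exactly the paper's route: the paper derives Theorem~\ref{th:Guo-particular} from Lemma~\ref{lm:Guo-reduced} by equating coefficients of $z^n$ via the Cauchy product and the relations \eqref{eq:pochammer}, precisely as in the proof of Theorem~\ref{th:Guo-general}. Your sign count $(-1)^{(2r+2)k}=1$ and the normalisation by the common factor $(\a)_n/[(b_1-1)_n(\b_{[1]})_n\,n!]$, using $(x-1)_n/(x)_n=(x-1)/(x+n-1)$, check out and reproduce the stated prefactors.
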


\subsection*{Acknowledgements}
The second and the third named authors have been supported by the Ministry of Science and Higher Education of the Russian Federation (agreement No.~075-02-2021-1395). The third named author has been also supported by RFBR (project 20-01-00018).

\pdfbookmark[1]{References}{ref}
\LastPageEnding

\end{document}